\documentclass[11pt]{amsart}
\usepackage{graphicx,amsmath, amssymb}
\usepackage{color,breqn}
\input xy

\xyoption{all}

\textheight 8.5in \textwidth 6.5in \oddsidemargin 0in \evensidemargin 0in

\begin{document}

\newtheorem{defn}{Definition}

\newcommand{\ad}{\ensuremath{\operatorname{ad}}}
\newcommand{\Aut}{\ensuremath{\operatorname{Aut}}}
\newcommand{\Vol}{\ensuremath{\operatorname{Vol}}}
\newcommand{\ch}{\ensuremath{\operatorname{char}}}
\newcommand{\Gom}{\ensuremath{\operatorname{Gom}}}
\newcommand{\Gal}{\ensuremath{\operatorname{Gal}}}
\newcommand{\Stab}{\ensuremath{\operatorname{Stab}}}
\newcommand{\Ends}{\ensuremath{\operatorname{Ends}}}
\newcommand{\rank}{\ensuremath{\operatorname{rank}}}
\newcommand{\dist}{\ensuremath{\operatorname{dist}}}
\newcommand{\Ball}{\ensuremath{\operatorname{Ball}}}
\newcommand{\Br}{\ensuremath{\operatorname{Br}}}
\newcommand{\id}{\ensuremath{\operatorname{id}}}
\newcommand{\Ord}{\operatorname{Ord}}
\newcommand{\End}{\operatorname{End}}
\newcommand{\F}{\mathbb{F}}
\newcommand{\Fqt}{\F_q(\!(t)\!)\!}
\newcommand{\N}{\mathbb{N}}
\newcommand{\G}{\Gamma}
\newcommand{\bs}{\backslash}
\newcommand{\forget}[1]{}
\newcommand{\quot}{\bs \! \bs}
\newcommand{\cA}{\mathcal{A}}
\newcommand{\cE}{\mathcal{E}}
\newcommand{\cG}{\mathcal{G}}
\newcommand{\cM}{\mathcal{M}}
\newcommand{\cU}{\mathcal{U}}
\newcommand{\la}{\langle}
\newcommand{\ra}{\rangle}
\newcommand{\bX}{\partial X}
\newcommand{\ep}{\varepsilon}

\newcommand{\Atwotilde}{\widetilde{A}_2}
\newcommand{\Antilde}{\widetilde{A}_n}
\newcommand{\Nitilde}{\widetilde{N}_i}
\newcommand{\Nzerotilde}{\widetilde{N}_0}
\newcommand{\Nonetilde}{\widetilde{N}_1}
\newcommand{\Ntwotilde}{\widetilde{N}_2}
\newcommand{\Ndminusonetilde}{\widetilde{N}_{d-1}}
\newcommand{\Sitilde}{\widetilde{S}_i}
\newcommand{\Szerotilde}{\widetilde{S}_0}
\newcommand{\Sonetilde}{\widetilde{S}_1}
\newcommand{\Stwotilde}{\widetilde{S}_2}
\newcommand{\tG}{\widetilde\Gamma}

\newcommand{\SL}{\ensuremath{\operatorname{SL}}}
\newcommand{\GL}{\ensuremath{\operatorname{GL}}}
\newcommand{\PSL}{\ensuremath{\operatorname{PSL}}}
\newcommand{\PGL}{\ensuremath{\operatorname{PGL}}}

\newcommand{\PG}{\ensuremath{\operatorname{PG}}}

\newcommand{\mK}{{\mathbb K}}
\newcommand{\mE}{{\mathbb E}}
\newcommand{\mN}{{\mathbb N}}
\newcommand{\mC}{{\mathbb C}}
\newcommand{\mB}{{\mathbb B}}
\newcommand{\mF}{{\mathbb F}}
\newcommand{\mP}{\mathbb{P}}
\newcommand{\mZ}{\mathbb{Z}}

\newcommand{\Fq}{\mF_{q}}
\newcommand{\Fqd}{\mF_{q^d}}

\newcommand{\cB}{{\mathcal B}}
\newcommand{\cO}{{\mathcal O}}

\newcommand{\oP}{{\overline{\Psi}}}


\newenvironment{mylist}{\begin{list}{}{
\setlength{\itemsep}{0mm}
\setlength{\parskip}{0mm}
\setlength{\topsep}{1mm}
\setlength{\parsep}{0mm}
\setlength{\itemsep}{0mm}
\setlength{\labelwidth}{6mm}
\setlength{\labelsep}{3mm}
\setlength{\itemindent}{0mm}
\setlength{\leftmargin}{9mm}
\setlength{\listparindent}{6mm}
}}{\end{list}}


\newtheorem{theorem}{Theorem}
\newtheorem*{theorem1'}{Theorem 1$'$}
\newtheorem{prop}[theorem]{Proposition}
\newtheorem{lemma}[theorem]{Lemma}
\newtheorem{corollary}[theorem]{Corollary}
\newtheorem{question}[theorem]{Question}
\newtheorem{remark}[theorem]{Remark}
\newtheorem{definition}[theorem]{Definition}
\newtheorem{conjecture}[theorem]{Conjecture}

\title{Cocompact lattices on $\tilde{A}_n$ buildings}

\author{Inna Capdeboscq} \address{Mathematics Institute, Zeeman Building, University of Warwick, Coventry
CV4 7AL, UK} 
\email{I.Korchagina@warwick.ac.uk}
\thanks{This research of the third author was supported by EPSRC Grant No. EP/D073626/2 and is now supported in part by ARC Grant No. DP110100440.  Thomas is also supported in part by an Australian Postdoctoral Fellowship.}

\author{Dmitriy Rumynin} \address{Mathematics Institute, Zeeman Building, University of Warwick, Coventry
CV4 7AL, UK} \email{D.Rumynin@warwick.ac.uk}

\author{Anne Thomas}\address{School of Mathematics and Statistics F07, University of Sydney NSW 2006, Australia}
\email{anne.thomas@sydney.edu.au}

\date{\today}

\begin{abstract}  We construct cocompact lattices $\G'_0 < \G_0$ in the group $G = \PGL_d(\Fqt)$ which are type-preserving and act transitively on the set of vertices of each type in the building $\Delta$ associated to $G$.  The stabiliser of each vertex in $\G'_0$ is a Singer cycle and  the stabiliser of each vertex in $\G_0$ is isomorphic to the normaliser of a Singer cycle in $\PGL_d(q)$.  We then show that the intersections of $\G'_0$ and $\G_0$ with $\PSL_d(\Fqt)$ are lattices in $\PSL_d(\Fqt)$, and identify the pairs $(d,q)$ such that the entire lattice $\G'_0$ or $\G_0$ is contained in $\PSL_d(\Fqt)$.  Finally we discuss minimality of covolumes of cocompact lattices in $\SL_3(\Fqt)$.  Our proofs combine a construction of Cartwright--Steger \cite{CS} with results about Singer cycles and their normalisers, and geometric arguments.
\end{abstract}

\maketitle

\section{Introduction}\label{s:intro}

Let $\F_q$ be the finite field of order $q$ where $q$ is a power of a prime $p$, and let $K$ be the field $\Fqt$ of formal Laurent series over $\F_q$, with discrete valuation $\nu :K^\times \to \mathbb{Z}$.  Let $\Delta$ be the building $\tilde{A}_n(K,\nu)$, as constructed in, for example \cite[Chapter 9]{Ronan} (see also Section \ref{s:buildings} below).  Then $\Delta$ is an affine building of type $\tilde{A}_n$, meaning that the apartments of $\Delta$ are isometric images of the Coxeter complex of type $\tilde{A}_n$.  The link of each vertex of $\Delta$ may be identified with the $n$--dimensional projective space $\PG(n,q)$ over $\F_q$. 

Let $d = n+1$ and let $G$ be the group $G = \cG(K)$, where $\cG$ is in the set $\{ \GL_d, \PGL_d, \SL_d, \PSL_d \}$.  Then $G$ is a totally disconnected, locally compact group which acts on $\Delta$ with kernel $Z(G)$.  It follows from a theorem of Tits \cite{Ti2} that $G/Z(G)$ is cocompact in the full automorphism group of $\Delta$.   If $\cG$ is $\GL_d$ or $\PGL_d$, then the $G$--action is type-rotating and transitive on the vertex set of $\Delta$, while if $\cG$ is $\SL_d$ or $\PSL_d$, then the $G$--action is type-preserving and transitive on each type of vertex.  See Section \ref{s:preliminaries} below for definitions of these terms.

By definition, a subgroup $\G \leq G$ is a \emph{lattice} if it is a discrete subgroup such that $\G \bs G$ admits a finite $G$--invariant measure, and a lattice $\G$ is \emph{cocompact} if $\G \bs G$ is compact.  In the cases $\cG = \PGL_d$, $\SL_d$ and $\PSL_d$, the centre of $G = \cG(K)$ is compact, hence $G$ acts on $\Delta$ with compact vertex stabilisers.  A subgroup $\G \leq G$ is then discrete if and only if $\G$ acts on $\Delta$ with finite vertex stabilisers, and if $\G \leq G$ is discrete then $\G$ is a cocompact lattice if and only if, in addition, $\Gamma$ acts cocompactly on $\Delta$.  Given any lattice $\G$ and a set $A$ of vertices of $\Delta$ which represent the orbits of $\G$, the Haar measure $\mu$ on $G$ may be normalised so that~$\mu(\G \bs G)$, the covolume of $\G$ in $G$, is given by the series $\sum_{a \in A} |\Stab_\G(a)|^{-1}$ (see \cite{BL}).  This is a finite sum if and only if $\G$ is cocompact.

The existence of an arithmetic cocompact lattice in $G = \cG(K)$ is due to Borel--Harder \cite{BH}.  By Margulis' Arithmeticity Theorem \cite{M}, if $d \geq 3$ then every lattice in such $G$ is arithmetic.  In the rank~$1$ case, that is, for $d = 2$, the building $\Delta$ is a tree of valence $q+1$, and there are several additional known constructions of cocompact lattices in $G$.  For example, Fig\'a-Talamanca and Nebbia \cite{FTN} constructed lattices in $G=\PGL_2(\Fqt)$ which act simply transitively on the set of vertices of the tree $\Delta$.  Such lattices are necessarily free products of $s$ copies of the cyclic group of order $2$, and $t$ copies of the infinite cyclic group, where $s + t = q+1$.  The cocompact lattices of minimal covolume in $G = \SL_2(\Fqt)$ were constructed in \cite{L1,LW}.  These lattices are fundamental groups of finite graphs of finite groups which, using Bass' covering theory for graphs of groups \cite{B}, are embedded in $G$.  Lubotzky \cite{L2} also constructed a moduli space of cocompact lattices in $\SL_2(\Fqt)$ which are finitely generated free groups, using a Schottky-type construction.  

If $d = 3$, then additional constructions of lattices in $G$ may be complicated by the fact that there exist uncountably many ``exotic" $\tilde{A}_2$--buildings, that is, buildings of type $\tilde{A}_2$ which are not of the form $\tilde{A}_2(K,\nu)$ for any field $K$, not necessarily commutative, with discrete valuation $\nu$ (Tits \cite{Ti1}).  On the other hand for $d \geq 4$, that is, for $n \geq 3$, there are no exotic building of type $\tilde{A}_{n}$ (Tits \cite{Ti3}).  

For $d \geq 3$, there exists a chamber-transitive lattice in $\PSL_d(\Fqt)$ if and only if $d = 3$ and $q = 2$ or $q = 8$ (see \cite{KLT} and its references).   Lattices in the group $G = \PGL_d(\Fqt)$ which act simply transitively on the vertex set of the associated building $\Delta$ were constructed for the case $d = 3$ in \cite{CMSZ1}, and for $d > 3$ in \cite{CS}.  We will describe the work of \cite{CMSZ1} and \cite{CS} further below.  In addition, in the case $d = 3$, Ronan \cite{R} constructed lattices acting simply transitively on the set of vertices of the same type in some, possibly exotic, $\tilde{A}_2$--building, and Essert \cite{E} constructed lattices acting simply transitively on the set of panels of the same type in some, again possibly exotic, $\tilde{A}_2$--building.   Essert's construction used complexes of groups (see \cite{BrH}), and had vertex stabilisers cyclic groups acting simply transitively on the set of points and lines of $\PG(2,q)$, the projective plane over $\F_q$.  Our work resolves some open questions of \cite{E}, as we explain below.

Our main results are Theorems \ref{t:PGL} and \ref{t:PSL} below.  See Section \ref{s:singer} below for the definition of a Singer cycle in $\PGL_d(q)$; such a group acts simply transitively on the set of points and lines of $\PG(2,q)$.  We first construct lattices in $\PGL_d(\Fqt)$.

\begin{theorem}\label{t:PGL}  Let $G = \PGL_d(\Fqt)$ and let $\Delta$ be the building associated to $G$.  Then $G$ admits cocompact lattices $\G_0' \leq \G_0$ such that:
\begin{itemize} 
\item the action of $\G_0'$ and of $\G_0$ on $\Delta$ is type-preserving and transitive on each type of vertex;
\item the stabiliser of each vertex in $\G_0'$ is isomorphic to a Singer cycle in $\PGL_d(q)$; and
\item the stabiliser of each vertex in $\G_0$ is isomorphic to the normaliser of a Singer cycle in $\PGL_d(q)$.
\end{itemize}
Moreover $\G_0'$ and $\G_0$ are generated by their $d$ subgroups which are the stabilisers of the vertices of the standard chamber in $\Delta$.
\end{theorem}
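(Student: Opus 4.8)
The plan is to start from a Cartwright--Steger lattice acting simply transitively on the vertices of $\Delta$ and to \emph{enlarge} it: by a Singer cycle to obtain $\G_0'$, and by the normaliser of a Singer cycle to obtain $\G_0$. Write $s=(q^d-1)/(q-1)$ for the order of a Singer cycle $\Sigma$ in $\PGL_d(q)$; recall from Section~\ref{s:singer} that $\Sigma$ acts simply transitively on the points, and simply transitively on the hyperplanes, of $\PG(d-1,q)$, and that its normaliser $N(\Sigma)$ has order $sd$ with $N(\Sigma)/\Sigma\cong\Gal(\Fqd/\Fq)\cong\mZ/d$. By \cite{CS} (and \cite{CMSZ1} when $d=3$) there is a lattice $\Lambda\le\PGL_d(\Fqt)=G$ acting simply transitively on the vertex set of $\Delta$; such a $\Lambda$ is type-rotating, and I would arrange its Cartwright--Steger generators to be indexed $\Sigma$-equivariantly by the neighbours of the base vertex $v_0$, that is, by the points of the link $\PG(d-1,q)$. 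Let $\Lambda_0=\ker(\Lambda\to\mZ/d)$ be its type-preserving subgroup, of index $d$, which then acts \emph{simply transitively on each type} of vertex.

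Fix a Singer cycle $\Sigma_0$ with normaliser $N_0=N(\Sigma_0)$ in $\PGL_d(q)$, and regard $\Sigma_0\le N_0\le\PGL_d(q)$ as subgroups of $\Stab_G(v_0)=\PGL_d(\cO)$ via the embedding of $\PGL_d(q)$ as classes of constant matrices. Set $\G_0':=\langle\Lambda_0,\Sigma_0\rangle$ and $\G_0:=\langle\Lambda_0,N_0\rangle$. Granting the normalisation statement discussed below, these are internal semidirect products $\G_0'=\Lambda_0\rtimes\Sigma_0$ and $\G_0=\Lambda_0\rtimes N_0$: indeed $\Lambda_0\cap\Sigma_0=\Lambda_0\cap N_0=1$, because $\Lambda_0$ acts freely while $\Sigma_0,N_0$ fix $v_0$. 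Every required property then follows by counting. Both groups contain $\Lambda_0$ with finite index ($s$, resp.\ $sd$), so each is discrete (being commensurable with the discrete group $\Lambda_0$) and cocompact (since $\Lambda_0$ is), and each is type-preserving and transitive on every type of vertex because $\Lambda_0$ already is. Finally, as $\Lambda_0$ acts freely and $[\G_0':\Lambda_0]=s=|\Sigma_0|$, $[\G_0:\Lambda_0]=sd=|N_0|$, comparing indices forces $\Stab_{\G_0'}(v_0)=\Sigma_0$ and $\Stab_{\G_0}(v_0)=N_0$; transitivity on each type then gives the stated stabilisers at every vertex.

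The crux — and the main obstacle — is the normalisation statement: that conjugation by $\Sigma_0$, and by $N_0$, preserves $\Lambda_0$ inside $G$ and induces the prescribed permutation of the Cartwright--Steger generators. This is not formal, since $\Stab_G(v_0)$ is large and the element of $G$ carrying $v_0$ to a prescribed neighbour is very far from unique; one must show that the \emph{unique} element of $\Lambda$ realising that move is exactly the one produced by conjugation. I would prove this working directly with the explicit generators: the generator $a_u$ carries $v_0$ to the neighbour $u$, the Singer cycle permutes these neighbours precisely as it permutes the indices, and a geometric comparison of the action of $\Sigma_0$ on the link $\PG(d-1,q)$ with the defining relations of $\Lambda$ should identify $\Sigma_0\,a_u\,\Sigma_0^{-1}$ with $a_{\sigma(u)}$. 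This is exactly where the results on Singer cycles and their normalisers from Section~\ref{s:singer} are needed.

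It remains to treat the generation statement. Since $\Stab_{\G_0'}(v_0)=\Sigma_0$ is one of the $d$ subgroups in question, it suffices to show that $H:=\langle\Stab_{\G_0'}(v_0),\dots,\Stab_{\G_0'}(v_n)\rangle$ is transitive on the type-$0$ vertices: given $g\in\G_0'$ one then finds $h\in H$ with $hv_0=gv_0$, whence $h^{-1}g\in\Stab_{\G_0'}(v_0)\le H$ and $g\in H$ (and identically for $\G_0$). Call a vertex $u$ \emph{good} if $\Stab_{\G_0'}(u)\le H$. The vertices $v_0,\dots,v_n$ of the standard chamber are good and pairwise adjacent, and if $u$ is good then its stabiliser, being a Singer cycle, permutes transitively the neighbours of $u$ of each extreme type (the points and the hyperplanes of the link of $u$); hence as soon as one neighbour $u''$ of a given extreme type is good, every such neighbour $u'$ is too, since $\Stab_{\G_0'}(u')=c\,\Stab_{\G_0'}(u'')\,c^{-1}$ for the element $c\in\Stab_{\G_0'}(u)\le H$ carrying $u''$ to $u'$. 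Starting from the standard chamber and using that $\Delta$ is connected through steps changing the type by $\pm1$, goodness propagates to every vertex, so $\Stab_{\G_0'}(u)\le H$ for all $u$ and in particular $H$ is transitive on each type. The point to verify carefully is that the two Singer orbits in each link — points and hyperplanes — are exactly the type-raising and type-lowering neighbours, so that this propagation genuinely exhausts $\Delta$.
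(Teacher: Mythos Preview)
Your overall architecture is a legitimate alternative to the paper's, but the two proofs are organised in opposite directions and your version leaves the hard step unproved.

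The paper works \emph{bottom-up}. It starts from the Cartwright--Steger lattice $\widetilde\Gamma=H\Gamma$ (with $\Gamma$ vertex-regular and $H=\Stab_{\widetilde\Gamma}(v_0)$), forms $\widetilde\Gamma'=S\Gamma$, and then \emph{defines} $\Gamma_0'=\langle S_0,\dots,S_{d-1}\rangle$ and $\Gamma_0=\langle N_0,\dots,N_{d-1}\rangle$ where $S_i=\Stab_{\widetilde\Gamma'}(v_i)$, $N_i=\Stab_{\widetilde\Gamma}(v_i)$. With this definition discreteness and the ``generated by the $d$ stabilisers'' clause are tautological; what must be proved is transitivity on each type, which is done by exactly the propagation argument you give at the end (your ``good vertex'' induction is the content of Propositions~\ref{p:vertex transitive} and~\ref{p:vertex transitive d>3}). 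You instead work \emph{top-down}, defining $\Gamma_0'=\Lambda_0\rtimes\Sigma_0$ and $\Gamma_0=\Lambda_0\rtimes N_0$; then transitivity and the stabiliser computation are immediate, but you must establish the normalisation, and you must run the propagation argument a second time anyway to get generation. So your route does not save work.

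The substantive gap is the normalisation step. You correctly flag it as the crux, but the sketch (``a geometric comparison of the action of $\Sigma_0$ on the link with the defining relations of $\Lambda$'') is not a proof. In the paper this is not done geometrically at all: that $\Gamma$ is normal in $H\Gamma$ is Proposition~\ref{p:Dm1}(3), proved by direct computation inside the cyclic algebra $\mathcal A=(\mathbb E(Y),\sigma,1+Y)$. The Singer cycle $S$ is the image of $\mathbb E^\times$ and the extra generator of $H$ is conjugation by $t$; both are shown to conjugate $\Gamma$ into itself by tracking $Y^{-1}$-tails and $t$-degrees. No link-geometry argument is offered, and I do not see how to complete yours without this algebraic input.

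There is also a concrete error in your embedding of $N_0$. You place $N_0\le\PGL_d(q)$ inside $\Stab_G(v_0)$ via constant matrices. For $\Sigma_0$ this is fine (indeed $\Psi(\mathbb E^\times)$ lands in $\PGL_d(q)$), but for the full normaliser it fails when $p\mid d$: as noted after Theorem~\ref{t:PGL} and in Section~\ref{s:applications}, in that case $\overline\Psi(H)\cap\PGL_d(q)$ has index $\Ord_p(d)>1$ in $\overline\Psi(H)$, because the solution $X$ of $N(X)=1+Y$ cannot be taken in $\Fq$. So the paper's $H$ is only \emph{isomorphic} to $N_{\PGL_d(q)}(S)$ (Lemma~\ref{l:H}(3)), not equal to it as a subgroup of $G$, and your constant-matrix $N_0$ is the wrong subgroup to adjoin. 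Whether that particular $N_0$ normalises $\Lambda_0$ is not addressed anywhere, and your semidirect-product definition of $\Gamma_0$ is therefore unjustified when $p\mid d$.
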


\noindent In fact, the  stabiliser of each vertex in $\G_0'$ is always contained in a finite subgroup of $G$ isomorphic to $\PGL_d(q)$. However for the vertex stabilisers of $\G_0$ the situation is trickier. If $(p,d)=1$,  then 
 the  stabiliser of each vertex in $\G_0$ is indeed contained in a finite subgroup of $G$ isomorphic to $\PGL_d(q)$.  On the other hand, as we discuss in Section  \ref{s:construction}, if $p$ divides $d$, then the  stabiliser of each vertex in $\G_0$ intersects  a finite subgroup of $G$ isomorphic to $\PGL_d(q)$ in a subgroup of index $p^a$, where $d = p^a b$ and $(p,b) = 1$.

We then construct lattices in $\PSL_d(\Fqt)$, where we identify the group $\PSL_d(\Fqt)$ with a subgroup of $\PGL_d(\Fqt)$.  Our notation continues from Theorem \ref{t:PGL}.

\begin{theorem}\label{t:PSL}  The groups $$\Lambda_0':=\G_0' \cap \PSL_d(\Fqt) \quad \mbox{and} \quad \Lambda_0:=\G_0 \cap \PSL_d(\Fqt)$$ are cocompact lattices in $\PSL_d(\Fqt)$, necessarily type-preserving.  Moreover: 
\begin{enumerate}
\item  Suppose that $(d,q-1) = 1$.
\begin{enumerate}
\item If $p$ does not divide $d$, then $\Lambda_0' = \G_0'$ and $\Lambda_0 = \G_0$.
\item If $p$ divides $d$, then $\Lambda_0' = \G_0'$ and $\Lambda_0$ is a proper subgroup of $\G_0$.
 \end{enumerate}
\item If $(d,q-1) \neq 1$, then $\Lambda_0'$ is a proper subgroup of $\G_0'$ and $\Lambda_0$ is a proper subgroup of $\G_0$.
\end{enumerate}
In all cases where $\Lambda_0' = \G_0'$ (respectively, $\Lambda_0 = \G_0$), it follows that $\G_0'$ (respectively, $\G_0$) is a cocompact lattice in $\PSL_d(\Fqt)$ with properties as described in Theorem \ref{t:PGL}.  
\end{theorem}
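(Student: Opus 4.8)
The plan is to prove Theorem \ref{t:PSL} by analyzing the intersection $\G_0 \cap \PSL_d(\Fqt)$ vertex-stabiliser by vertex-stabiliser, reducing the global statement to a finite-group-theoretic computation. The key observation is that $\PSL_d(\Fqt)$ is a normal subgroup of $\PGL_d(\Fqt)$ with quotient $\PGL_d(\Fqt)/\PSL_d(\Fqt) \cong K^\times / (K^\times)^d$; the determinant (valued in this quotient) is the crucial invariant. Since each vertex stabiliser of $\G_0'$ is a Singer cycle $S \cong \mathbb{Z}/(q^d-1)/(q-1)$ sitting inside a finite subgroup isomorphic to $\PGL_d(q)$, I would first compute the image of such a Singer cycle under the determinant map $\PGL_d(q) \to \F_q^\times / (\F_q^\times)^d$, which controls whether $S \cap \PSL_d(\Fqt) = S$. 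This image is governed by $\gcd(d, q-1)$, explaining the dichotomy between parts (1) and (2) of the theorem.

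First I would establish that $\Lambda_0'$ and $\Lambda_0$ are lattices. Discreteness is automatic since they are subgroups of the discrete (by Theorem \ref{t:PGL}) groups $\G_0'$ and $\G_0$ viewed inside $\PSL_d(\Fqt)$; one checks $\PSL_d(\Fqt)$ meets each vertex stabiliser in a finite group. Cocompactness follows because $\PSL_d(\Fqt)$ has finite index in $\PGL_d(\Fqt)$ (the index being $|K^\times/(K^\times)^d|$, which is finite), so $\Lambda_0' \backslash \PSL_d(\Fqt)$ is a finite union of translates of a compact set; more directly, $\G_0'$ acts cocompactly on $\Delta$ and $\Lambda_0'$ has finite index in $\G_0'$, so it too acts cocompactly. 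That the actions are type-preserving is inherited from Theorem \ref{t:PGL}.

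For the finite-index computations, I would compute, for each vertex stabiliser $H$ of $\G_0$ (the normaliser of a Singer cycle), the index $[H : H \cap \PSL_d(\Fqt)]$. The determinant of an element of a Singer cycle, when the cycle is realised inside $\PGL_d(q)$, generates $\F_q^\times$ up to $d$-th powers precisely when $\gcd(d,q-1)$ is nontrivial; the normaliser contributes an additional Frobenius factor of order $d$. When $(d,q-1)=1$ the determinant map kills the Singer cycle modulo $d$-th powers, forcing $\Lambda_0' = \G_0'$, and the behaviour of $\Lambda_0$ versus $\G_0$ then depends only on whether $p \mid d$ (this is where the index-$p^a$ phenomenon noted after Theorem \ref{t:PGL} enters, since the part of $\G_0$ lying outside $\PGL_d(q)$ must be handled separately). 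When $(d,q-1) \neq 1$, the Singer cycle has nontrivial determinant image, yielding the proper containments in part (2).

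The main obstacle will be the case $p \mid d$ in part (1)(b), where the vertex stabiliser of $\G_0$ is \emph{not} contained in a finite $\PGL_d(q)$ but meets it in index $p^a$ (with $d = p^a b$, $(p,b)=1$). Here I cannot reduce the determinant computation entirely to the finite group $\PGL_d(q)$; I must track how the extra generators of $\G_0$ — coming from the construction in Section \ref{s:construction} — interact with the valuation $\nu$ and hence with $(K^\times)^d$. The careful point is that an element with nonzero valuation can have determinant a nontrivial $d$-th power class even when its reduction lies in $\PSL_d(q)$, so the finite and infinite contributions to the determinant class must be disentangled. Resolving this requires the explicit description of $\G_0$'s generators and their determinants, which is exactly the structural information provided by Theorem \ref{t:PGL} and the surrounding discussion.
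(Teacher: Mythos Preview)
Your approach is essentially the paper's: reduce to computing $[H : H \cap \PSL_d(\Fqt)]$ for each vertex stabiliser $H$, using the induced determinant $\overline{\det}\colon \PGL_d(\Fqt) \to \Fqt^\times/(\Fqt^\times)^d$ with kernel $\PSL_d(\Fqt)$, and then read off the case analysis from the value of this index (which the paper computes in Proposition~\ref{p:Dm3} and Lemma~\ref{l:H}(4) as $\delta \cdot \Ord_p(d)$ with $\delta=(d,q-1)$). Your identification of the delicate point in case (1)(b) is also correct and matches the paper's treatment.

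There is, however, one genuine error in your cocompactness argument. You assert that $\PSL_d(\Fqt)$ has finite index in $\PGL_d(\Fqt)$, with index $|K^\times/(K^\times)^d|$. When $p \mid d$ this is false: in characteristic $p$ one has $(1+f)^p = 1+f^p$, so the $p$-th powers of principal units $1 + t\F_q[[t]]$ form the subgroup $1 + t^p\F_q[[t^p]]$, which has \emph{infinite} index. Hence $K^\times/(K^\times)^d$ is infinite whenever $p \mid d$, and you cannot deduce cocompactness of $\Lambda_0$ this way. Your fallback claim, that $\Lambda_0$ has finite index in $\G_0$, is the right one and is exactly what the paper uses; but it requires justification, which you omit. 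The paper's argument is that $\G_0$ is generated by the finitely many finite groups $N_i$, so its image under $\overline{\det}$ is a finitely generated abelian group all of whose generators are torsion, hence finite; the kernel $\Lambda_0$ therefore has finite index. You should supply this step rather than treat it as immediate.
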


\noindent In particular, in Section \ref{s:d>3} we give the precise structure of the vertex stabilisers in $\Lambda_0$ and $\Lambda_0'$, and we describe the cases in which these lattices can be generated by their vertex stabilisers.

Since the centre of $\SL_d(\Fqt)$ is finite and fixes $\Delta$ pointwise, if $\G$ is any lattice in $\PSL_d(\Fqt)$ then the full pre-image of $\G$ under the canonical epimorphism is a cocompact lattice in $\SL_d(\Fqt)$.  We thus obtain lattices in $\SL_d(\Fqt)$ as well.  Of course if $(d,q-1) = 1$, then the centre of $\SL_d(\Fqt)$ is trivial, and so, for example, $\G'_0 = \Lambda_0'$ itself is a lattice in $\SL_d(\Fqt)$.

Our original motivation was to find cocompact lattices of minimal covolume in $\SL_3(\Fqt)$.  For this, it was natural to consider vertex stabilisers which are Singer cycles or normalisers of Singer cycles, since these are the vertex stabilisers of the cocompact lattices of minimal covolume in $\SL_2(\Fqt)$ (see \cite{L1, LW}) and more generally in topological rank $2$ Kac--Moody groups $G$ over $\F_q$ (see \cite{CT}, where the minimality result holds under the conjecture that cocompact lattices in such $G$ do not contain $p$--elements).  In Section \ref{s:no p elements} below, we show that a lattice $\G < \SL_d(\Fqt)$ is cocompact if and only if it does not contain any $p$--elements.  This analogue of Godement's Compactness Criterion will not surprise experts, but we were not able to find it in the literature.   In Section \ref{s:minimality}, we are able to use this criterion  to show that when $(3,q-1) = 1$ and $p = 2$, the lattice $\G_0$ is a cocompact lattice in $\SL_3(\Fqt)$ of minimal covolume.  We also show that when $(3,q-1) = 1$ and $p = 3$, $\G_0'$ is a maximal lattice in $\SL_3(\Fqt)$, and that when $(3,q-1) = 1$ and $p\neq 3$,
 $\G_0$ is a maximal lattice in $\SL_3(\Fqt)$.  We conclude the discussion of covolumes with a conjecture about the cocompact lattice of minimal covolume in $\SL_3(\Fqt)$ when $(3,q-1) = 1$ and $p$ is odd.  

Finally, in Section \ref{s:essert}, we discuss how our results answer some open questions from the work of Essert \cite{E}.    For example, Theorem~\ref{t:PSL} implies that for all $q$ such that $(3,q-1) =1$, the group $\SL_3(\Fqt)$ contains a lattice which acts simply transitively on the set of panels of each type in $\Delta$. 

To obtain the lattices $\G'_0$ and $\G_0$ in Theorem \ref{t:PGL}, we use a construction of Cartwright and Steger from \cite{CS}, which generalises work of \cite{CMSZ1}.  This construction gives cocompact lattices $\G < \tG$ in the automorphism group $\Aut(\widetilde{\mathcal{A}})$ of a certain algebra $\widetilde{\mathcal{A}}$, such that $\Aut(\widetilde{\mathcal{A}})$ is isomorphic to $\PGL_d(\Fqt)$.
The lattice $\G$ 
acts simply transitively on the vertex set of $\Delta$, and $\tG = H\G$ 
where $H$ is a finite group which is the stabiliser in $\tG$ of a vertex of $\Delta$.  
We review and slightly extend this construction in Section~\ref{s:CS}. 
Our treatment applies to any cyclic Galois extension rather than just the extension of finite fields $\Fqd \supseteq \Fq$.  
In Section~\ref{s:applications} we choose an explicit isomorphism $\Aut(\widetilde{\mathcal{A}})\rightarrow \PGL_d(\Fqt)$ and so 
move our discussion explicitly into $\PGL_d(\Fqt)$.
We also show that $H$ is  isomorphic to the normaliser of a Singer cycle $S$ in $\PGL_d(q)$.

For expository reasons, we then divide the remaining proof of Theorems \ref{t:PGL} and \ref{t:PSL} between the case $d = 3$, in Section \ref{s:d=3}, and the cases $d > 3$, in Section \ref{s:d>3}.  For all $d \geq 3$, we define $\G'_0$ and $\G_0$ to be the subgroups of $\tG$ generated by suitable $\tG$--conjugates of $S$ or $H$, respectively.  Since $\tG$ is a discrete subgroup of $\PGL_d(\Fqt)$, it is immediate that $\G'_0$ and $\G_0$ are discrete.  Using geometric arguments, we then show that $\G'_0$ and $\G_0$ act cocompactly on $\Delta$, hence are cocompact lattices.  The main additional ingredient in the proof of Theorem \ref{t:PSL} is our determination in Section \ref{s:CS} of the intersection of $H$ with $\PSL_3(\Fqt)$.  This intersection is also used to show that, for certain values of $d$ and $q$, in fact $\G_0 = \tG \cap \PSL_d(\Fqt)$ or $\G'_0 = \tG \cap \PSL_d(\Fqt)$.

\subsection*{Acknowledgements}  We would like to thank Alina Vdovina and Kevin Wortman for helpful conversations, and the University of Sydney  and the University of Warwick for travel support.  

\section{Preliminaries}\label{s:preliminaries}

We briefly recall some definitions and results, and fix notation.

\subsection{Singer cycles and projective spaces}\label{s:singer}

The following definitions and results are taken from \cite{CdR}.  Let $q$ be a power of a prime $p$ and let $V$ be the vector space $\F_q^d$, for $d \geq 2$.  A cyclic subgroup $S$ of $\GL_d(q)$ that acts simply transitively on the set of non-zero vectors of $V$ is called a \emph{Singer cycle of $\GL_d(q)$}.  Its generator $s$ is an element of $\GL_d(q)$ of order $(q^d-1)$ and so $|S|=q^d-1$. The image of a Singer cycle of $\GL_d(q)$ in $\PGL_d(q)$ under the canonical epimorphism is called a \emph{Singer cycle of $\PGL_d(q)$}.   The intersection of a Singer cycle $S$ of $\GL_d(q)$ with $\SL_d(q)$, that is, $S\cap\SL_d(q)$, is called a \emph{Singer cycle of
 $\SL_d(q)$}. Its image under the canonical epimorphism from $\SL_d(q)$ onto $\PSL_d(q)$ is called a \emph{Singer cycle of $\PSL_d(q)$}.   A Singer cycle of $\PGL_d(q)$ or of $\SL_d(q)$ has order $\frac{q^d - 1}{q-1}$, and a Singer cycle of $\PSL_d(q)$ has order $\frac{q^d - 1}{(q-1)\delta}$ where $\delta  = (d,q-1)$.  

Note that a Singer cycle of $\PGL_d(q)$ acts simply transitively on the set of $1$--dimensional subspaces of $V$, and hence acts simply transitively on the set of $(d-1)$--dimensional subspaces of $V$ as well.  

We denote by $\PG(n,q)$ the projective space of dimension $n=d-1$ over the finite field $\Fq$.  Recall that the set of \emph{points} of $\PG(n,q)$ is the set of $1$--dimensional subspaces of $V$, and the set of \emph{lines} is the set of $2$--dimensional subspaces of $V$.  

Thus in particular, a Singer cycle of $\PGL_3(q)$ acts simply transitively on both the set of points and the set of lines of the projective plane $\PG(2,q)$.  
If $(3,q-1)=1$, the order of a Singer cycle of $\PSL_3(q)$,  $\frac{q^3-1}{q-1}$, coincides with the order of a Singer cycle of $\PGL_3(q)$. It follows immediately that in this case, if we identify $\PSL_3(q)$ with a subgroup of $\PGL_3(q)$, the Singer cycles of $\PSL_3(q)$ and $\PGL_3(q)$ coincide.  On the other hand, if $3$ divides $q-1$ (that is, $(3,q-1)=3\neq 1$), the order of a Singer cycle of $\PSL_3(q)$ is $\frac{q^3-1}{3(q-1)}$ and so this subgroup cannot act transitively on the $q^2+q+1$ points of the projective plane  
$\PG(2,q)$.  In fact, a simple application of Orbit-Stabiliser Theorem shows that even the normaliser of a Singer cycle of $\PSL_3(q)$ cannot act transitively on the points of $\PG(3,q)$.   
Moreover, for large enough $q$, the only $p'$--subgroups of $\PSL_3(q)$  that act transitively on the  points of $\PG(2,q)$ are Singer cycles and their normalisers and only when $(3,q-1)=1$. This follows immediately
from an inspection of the maximal  subgroups of $\PSL_3(q)$ that are provided by a result of  Hartley and Mitchell (Theorem 6.5.3 of \cite{GLS3}).  Hence for large enough $q$, if $3$ divides $(q-1)$ there are no $p'$--subgroups of $\SL_3(q)$ that act transitively on the set of points of $\PG(2,q)$.

\subsection{Buildings of type $\Antilde$}\label{s:buildings}

We assume basic knowledge of buildings, and extract from \cite{CMSZ1} and \cite{CS} the facts that we will need.  A reference for this theory is \cite{Ronan}.  We also recall the Levi decomposition of a vertex stabiliser in $\SL_d(\Fqt)$ or $\PSL_d(\Fqt)$.

Let $\Delta$ be the building $\tilde{A}_n(K,\nu)$ on which $\cG(K)$ acts, where $K = \Fq(\!(t)\!)$, as in the introduction.  Let $\mathcal{O} := \{ a \in K : \nu(a) \geq 0\} = \Fq[[t]]$.  A \emph{lattice in $K^d$} is a free $\mathcal{O}$--submodule of $K^d$ of rank $d$, and two lattices $L$ and $L'$ are said to be \emph{equivalent} if $L' = La$ for some $a \in K^\times$.  The vertices of $\Delta$ are the equivalence classes of lattices in $K^d$.  The group $G = \PGL_d(\Fqt)$ acts transitively on the vertex set of $\Delta$, so that the stabiliser of the equivalence class represented by $\mathcal{O}^d$ is $P_0:=\PGL_d(\Fq[[t]])$.  Thus we may identify the vertex set of $\Delta$ with the set of cosets $G/P_0$.  For $g \in \GL_d(\Fqt)$, we denote the image of $g$ in $\PGL_d(\Fqt)$ by $\overline{g}$.  The \emph{type} of the vertex $\overline{g}P_0$ is $\nu(\det(g)) \pmod d$.  

Let $v_0$ be the vertex of $\Delta$ identified with the trivial coset of $P_0$.  Then $v_0$ is the vertex of type $0$ in the standard chamber of $\Delta$.  For $i = 1,\ldots, d-1$, the vertex $v_i$ of type $i$ in the standard chamber is a coset of the form $\overline{g_i}P_0$ where $g_i \in \GL_d(\Fqt)$ has entries in $\mathcal{O}$, and $\nu(\det(g_i)) = i$.  The set of all vertices adjacent to $v_0$ corresponds to the elements of the projective space $\PG(n,q)$, and moreover we may choose the types so that for each $i = 1,\ldots, d-1$, the vertices neighbouring $v_0$ of type $i$ correspond to the $i$--dimensional subspaces of $V = \Fq^d$.    

The action of each $\overline{g} \in \PGL_d(\Fqt)$ on $\Delta$ induces a permutation of the set of types of the form $i \mapsto i + c \pmod d$, where $c = \nu(\det(g))$.  Any automorphism of $\Delta$ which induces a permutation of types of the form $i \mapsto i + c \pmod d$, for some $c$, is said to be \emph{type-rotating}.  In particular, a type-rotating automorphism fixes either no type or all types.

We will need the following decomposition of vertex stabilisers, which is a special case of a result for topological Kac--Moody groups in \cite{CR}.

\begin{prop}[Levi decomposition] \label{p:levi} Let $G = \cG(\Fqt)$ where $\cG$ is $\SL_d$ or $\PSL_d$, $d \geq 2$, and $q$ is a power of a prime $p$.  Let $v$ be a vertex of the building $\Delta$ associated to $G$.  Then the stabiliser of $v$ in $G$ has Levi decomposition
\[ L_v \ltimes U_v \]
where $L_v$ is isomorphic to the finite group $\cG(\F_q)$, and $U_v$ is pro--$p$.
\end{prop}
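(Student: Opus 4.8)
The plan is to reduce to the standard vertex and then exhibit the Levi decomposition explicitly via reduction modulo $t$. First I would use transitivity of the $G$-action on each type of vertex: every vertex stabiliser is conjugate, via an element of $\GL_d(\Fqt)$ normalising $\cG(\Fqt)$, to the stabiliser of the base vertex $v_0$. Since such a conjugation is a topological group automorphism, it carries a Levi decomposition for $v_0$ to one for the given vertex, preserving both the isomorphism type of the Levi factor and the pro-$p$ property, so it suffices to treat $v_0$. For $\cG = \SL_d$ the stabiliser of $v_0$ in $\SL_d(\Fqt)$ is $\SL_d(\cO)$: the stabiliser of the lattice class $[\cO^d]$ in $\GL_d(\Fqt)$ is $\GL_d(\cO)$ together with scalars, and intersecting with $\SL_d$ forces the scalar part to be trivial by a determinant valuation count, leaving $\SL_d(\cO)$.

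The heart of the argument is the reduction homomorphism $\rho \colon \SL_d(\cO) \to \SL_d(\Fq)$ induced by the quotient $\cO \to \cO/t\cO = \Fq$. I would define $L_v$ to be the image of the section $\sigma \colon \SL_d(\Fq) \hookrightarrow \SL_d(\cO)$ coming from the inclusion of constants $\Fq \hookrightarrow \cO$. Since $\rho \circ \sigma = \id$, this section splits $\rho$, forces $\rho$ to be surjective, and identifies $L_v \cong \SL_d(\Fq) = \cG(\Fq)$. Setting $U_v := \ker \rho$, the vertex stabiliser is then the internal semidirect product $L_v \ltimes U_v$.

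It remains to show $U_v$ is pro-$p$. Here I would use that $\cO$ is complete, so $\SL_d(\cO) = \varprojlim_k \SL_d(\cO/t^k\cO)$ and correspondingly $U_v = \varprojlim_k U_v/U_v(k)$, where $U_v(k) = \{ g \in \SL_d(\cO) : g \equiv I \pmod{t^k}\}$. The assignment $g \mapsto (g-I)/t^k \bmod t$ defines a homomorphism $U_v(k) \to (\mathfrak{sl}_d(\Fq), +)$ with kernel $U_v(k+1)$, so each quotient $U_v(k)/U_v(k+1)$ is an elementary abelian $p$-group. As $U_v(1) = U_v$ and $\bigcap_k U_v(k) = 1$, every finite quotient $U_v/U_v(k)$ is a $p$-group, whence $U_v$ is an inverse limit of finite $p$-groups, i.e.\ pro-$p$.

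Finally, for $\cG = \PSL_d$ I would descend the decomposition along the projection $\SL_d(\Fqt) \to \PSL_d(\Fqt)$. The centre $Z = Z(\SL_d(\Fqt))$ consists of the scalar matrices $\lambda I$ with $\lambda^d = 1$, and since $\Fqt$ has characteristic $p$ its only roots of unity are those already in $\Fq^\times$; hence every such $\lambda$ lies in $\Fq$, so $Z \subseteq L_v$ and $Z \cap U_v = 1$. The image of $\SL_d(\cO)$ in $\PSL_d(\Fqt)$ is therefore $(L_v/Z) \ltimes U_v$, with $U_v$ mapping isomorphically onto its image, giving the Levi decomposition with $L_v \cong \PSL_d(\Fq) = \cG(\Fq)$ and $U_v$ still pro-$p$. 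I expect the main delicacy to be this last descent step — confirming via the root-of-unity computation that the centre lies entirely inside the Levi factor, so that the semidirect structure passes cleanly to the quotient — together with the verification that the congruence filtration has elementary abelian $p$-quotients.
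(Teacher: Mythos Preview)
Your proof is correct and self-contained, but note that the paper does not actually prove this proposition: it simply states that the Levi decomposition is a special case of a general result of Caprace--R\'emy \cite{CR} for topological Kac--Moody groups. So your approach is genuinely different in character. The paper imports a structural theorem valid in far greater generality, while you work entirely inside $\SL_d$ over the complete local ring $\cO = \Fq[[t]]$, using the reduction-mod-$t$ map, the obvious section by constant matrices, and the congruence filtration to read off both the splitting and the pro-$p$ property directly. Your route has the advantage of being elementary and explicit for the case at hand, and it makes transparent exactly where the characteristic-$p$ hypothesis enters (in showing that the successive congruence quotients are elementary abelian $p$-groups, and in the root-of-unity computation forcing $Z \subseteq L_v$); the paper's citation buys uniformity across all topological Kac--Moody groups at the cost of a black box. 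One small remark: your reduction to the base vertex via conjugation by an element of $\GL_d(\Fqt)$ is fine, but you might note that this is needed precisely because $\SL_d$ and $\PSL_d$ are only transitive on vertices of a fixed type, so to pass between types one must step outside $G$ --- this is implicit in what you wrote but worth making explicit.
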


\section{Generalisation of Cartwright--Steger construction}\label{s:CS}

We first in Section \ref{s:basics} describe the basics of cyclic algebras, following Pierce \cite{Pie}.   We then in Section \ref{s:construction} extend the construction of \cite{CMSZ1} and \cite{CS} to general cyclic extensions, using invariant language.  For brevity, we will refer to the construction in \cite{CMSZ1} and \cite{CS} as the Cartwright--Steger construction.   Finally in Section \ref{s:applications} we restrict to the case of finite fields and recall or prove facts that will be useful for our constructions of lattices in Sections \ref{s:d=3} and \ref{s:d>3} below.

\subsection{Basic definitions and properties}\label{s:basics}

Let $\mE \supseteq \mK$ be a cyclic Galois extension of degree $d$, $\sigma \in \Gal (\mE / \mK)$ a generator and $a\in \mK^\times$ an element.   
The cyclic algebra $(\mE,\sigma,a)$ is generated as a ring by $\mE$ and an extra element $t$, with $\mE$ a subring so that the ring operations of $\mE$ are retained  in $(\mE,\sigma,a)$. 
The relations involving $t$ are
$$
t^d=a, \ \ 
tb = \sigma (b)t \ \mbox{ for all } b\in\mE. 
$$ 
The following are well-known properties of the cyclic algebras:
\begin{mylist}
\item[(1)] $(\mE,\sigma,a)$ is a central simple algebra over $\mK$ of dimension $d^2$;
\item[(2)] $\mE$ is a maximal subfield of $(\mE,\sigma,a)$; and
\item[(3)] the elements $1, t, t^2, \ldots, t^{d-1}$ form a basis of $(\mE,\sigma,a)$ over $\mE$.
\end{mylist}
In particular, each cyclic algebra defines an element $[(\mE,\sigma,a)]$ in the relative Brauer group $\Br(\mE /\mK)$.
Recall the definitions of the trace and the norm $T,N: \mE \rightarrow \mK$:
$$
T(a)=\sum_{k=0}^{d-1} \sigma^k (a), \ \ 
N(a)=\prod_{k=0}^{d-1} \sigma^k (a).
$$
The norm image $N(\mE^\times)$ is a subgroup of $\mK^\times$. 
We also need the following properties~\cite{Pie}:
\begin{mylist}
\item[(4)] $(\mE,\sigma,a)\cong M_d(\mK)$ if and only if $a\in N(\mE^\times)$; and
\item[(5)] if $a\in\mK^\times$ and the order of  $aN(\mE^\times)\in \mK^\times /N(\mE^\times)$ is $d$ then $(\mE,\sigma,a)$ is a division algebra.
\end{mylist}

The cyclic extension $\mE \supseteq \mK$ gives rise to two further cyclic Galois extensions:
the fields of rational functions $\mE(Y) \supseteq \mK(Y)$ and
the fields of Laurent series
$\mE ((Y)) \supseteq \mK ((Y))$.
One can think of them as Galois extensions with the same Galois group, so that 
$\sigma$ acts on the coefficients while $\sigma (Y)=Y$.

\subsection{The construction}\label{s:construction}

The first cyclic algebra of interest to us is 
$$\cA:=(\mE(Y),\sigma,1+Y).
$$
It is a division algebra,
by property (5) \cite[p.84]{Jaco}: the equation
$$
N\left(\frac{a_0+\cdots +a_m Y^m}{b_0+\cdots +b_k Y^k}\right) = (1+Y)^n
$$
with $a_m\neq 0\neq b_k$ gets rewritten as
$$
N(a_m)Y^{md} + \cO (Y^{md-1}) = ( N(b_k) Y^{kd} + \cO (Y^{kd-1}))(Y^{n} + \cO (Y^{n-1})).
$$
Comparing the highest terms, $md=kd+n$. Hence $n$ must be divisible by $d$, to be a norm of some element.
Since $N(1+Y)=(1+Y)^d$, the order of $(1+Y)N(\mE^\times)$ is exactly $d$. By (5), $\cA$ is a division algebra.

The second cyclic algebra of interest is 
$$
\widetilde{\cA}:=(\mE((Y)),\sigma,1+Y)\cong \mK((Y))\otimes_{\mK(Y)}\cA.
$$
It is isomorphic to the matrix algebra $M_d (\mK((Y)))$ by (4).
To observe this, let us note that the trace $T: \mE((Y)) \rightarrow \mK((Y))$ is surjective.
Indeed, pick any $x\in \mE((Y))$ with nonzero trace $T(x)=\beta \in \mK((Y))$, then for every $\alpha\in \mK((Y))$
we have
$T(\alpha  \beta^{-1} x )=\alpha$.
This allows to solve the equation
$$
N( 1 + x_1Y + x_2 Y^2 + \cdots ) = 1+Y
$$
recursively: $x_1$ is a solution of $T(x_1)=1$, and each consecutive term $x_n$ will be a solution of
$T(x_n) = f_n (x_1, \ldots, x_{n-1})$ for a certain function $f_n$ of all the previously found terms.

We would like to write an explicit isomorphism $\Psi$
from $\widetilde{\cA}$ to a matrix algebra. 
Observe that in $\widetilde{\cA}$ for any $a,b\in \mE ((Y))$
$$
(at)b=\sigma(b)at \ \
\mbox{ and } \ \
(at)^d=a\sigma(a)t^2(at)^{d-2}=\cdots =N(a)t^d=N(a)(1+Y)
.
$$
Hence, if $X\in\mE ((Y))$ is a solution of $N(X)=1+Y$ then
$$
\sum_j a_j t^j \mapsto \sum_j a_j X^j \hat{t}^j
$$
is an isomorphism from $\widetilde{\cA}$  to  
$(\mE((Y)),\sigma,1)$.
The latter is known as the 
{\em skew group algebra}
and admits an explicit isomorphism
%
to the matrix algebra $\End_{\mK((Y))}(\mE((Y))$ given by
$a\hat{t}^j : b \mapsto a \sigma^j (b)$. Composing these isomorphisms, we arrive at an explicit isomorphism
$$ \Psi : \widetilde{\cA} \rightarrow \End_{\mK((Y))}(\mE((Y))) $$
given by
\begin{equation}\label{e:Psi}
\Psi \left(\sum_j a_j t^j\right): b \mapsto \sum_j a_j X^j \sigma^j(b), \ \ b\in\mE((Y))  .
\end{equation}

We will abuse notation by denoting various restrictions of $\Psi$, for instance to $\cA$, by the same letter. 
On the level of multiplicative groups
we have an injective homomorphism
$$
\Psi : \cA^\times \rightarrow \GL_{\mK((Y))}(\mE((Y))).
$$
By the Skolem--Noether Theorem, every $\mK(Y)$--linear
automorphism of $\cA$ is inner, so we have another injective group homomorphism
$$
\oP : \Aut(\cA)\cong \cA^\times /Z(\cA^\times) \rightarrow \PGL_{\mK((Y))}(\mE((Y))).
$$

Now we are ready to introduce the {\em Cartwright--Steger groups} \cite{CMSZ1,CS}.
Let $\cA_0$ be the $\mE [Y^{-1}]$--span of the elements $t^m$, $m<d$ in $\cA$. Notice that it is not a subring: $t^d=1+Y\not\in \cA_0$.
The ``big" Cartwright--Steger group $\widetilde{\G}$ is defined as 
$$
\widetilde{\Gamma} := \{ \gamma \in  \Aut(\cA)\;\mid\; \gamma (\cA_0)\subseteq \cA_0 \}.
$$
Why is $\widetilde{\Gamma}$ a subgroup? 
To show this we choose a $\mK(Y)$--basis $\cB$  of $\cA$ consisting of the elements
$at^m$, $m<d$, $a\in \mE$. The basis $\cB$ is also a $\mK((Y))$--basis of $\widetilde{\cA}$.
Writing automorphisms in this basis gives an injective homomorphism
$$
\Phi : \Aut(\cA) \rightarrow \GL_{\mK(Y)}(\cA) \rightarrow \GL_{\mK((Y))}(\widetilde{\cA}) \cong \GL_{d^2} (\mK((Y))).
$$
Moreover, each $\Phi (\gamma)$ is an automorphism of $\widetilde{\cA}$.
By the Skolem--Noether Theorem,
$\Phi (\gamma) (x) = y_\gamma x{y_\gamma}^{-1}$ for a certain $y_\gamma\in \widetilde{\cA} \cong M_d (\mK((Y)))$.
It follows that $\det(\Phi (\gamma)) = \det (y_\gamma)^d \det (y_\gamma)^{-d} = 1$ \cite[p.129]{CS}.
Thus, we can restrict the image of $\Phi$ to the special linear group:
$$
\Phi : \Aut(\cA) \rightarrow \SL_{d^2} (\mK((Y))).
$$
Clearly, $\gamma\in\widetilde{\Gamma}$ if and only if the coefficients of $\Phi (\gamma)$ lie in $\mK [Y^{-1}]$.
Thus,
$$
\widetilde{\Gamma} = \Phi^{-1} (\SL_{d^2} (\mK[Y^{-1}]))
$$
is a subgroup.  
Since $\gamma\in\widetilde{\Gamma}$ is $\mK(Y)$--linear, we have 
$\gamma (Y^{-1}\cA_0)\subseteq Y^{-1}\cA_0$ for any $\gamma \in \widetilde{\Gamma}$.
Thus $\gamma$ defines a linear map $\Theta (\gamma) \in \mbox{End}_\mK (\cA_+)$ where
$\cA_+ = \cA_0 / Y^{-1}\cA_0$. The map $\Theta$ is a semigroup homomorphism from a group, 
so its image consists of invertible elements:
$$
\Theta : \widetilde{\Gamma}
\rightarrow \GL_\mK (\cA_+) \cong \GL_{d^2} (\mK).
$$
In essence, $\Theta$ is the $Y$--degree zero term of $\Phi$:
the basis $\cB$ defined above gives an $\mK$--basis of $\cA_+$.
The basis $\cB$ has a partial order coming from the degree of $t$ in $[at^j]=at^j+ Y^{-1}\cA_0$.
Let $T$ be the group of ``unitriangular'' transformations in this basis, that is,
$$
T = \{ \pi \in \GL_\mK (\cA_+) \;\mid\; \forall a\in \mE, j<d \ \ 
\pi ([at^j]) = [at^j] +\sum_{i=0}^{j-1} [a_it^i], \  a_i\in \mE \} .
$$
Finally, the ``small" Cartwright--Steger group is
$$
\Gamma := \Theta^{-1} (T) \leq \widetilde{\G}.
$$
(Since not all of $T$ may be in the image of $\Theta$, we should perhaps write that $\G = \Theta^{-1}(T) \cap \mbox{Im}(\Theta)$.)

\begin{lemma}\label{l:gamma}
If $\gamma\in\Gamma$ then 
$$
 \gamma (t)=t+\cO (Y^{-1})     
\ \ 
\mbox{ and }
\ \ 
\gamma(t^{d-1}) = t^{d-1}+\cO (Y^{-1})
$$
where
$\cO ({Y^{-1}})$ denotes a polynomial
in negative degrees of $Y$ with coefficients in $\cA_0$. 
\end{lemma}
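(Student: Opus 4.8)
The plan is to reduce everything to the single element $u:=\gamma(t)$. Since $\gamma$ is a $\mK(Y)$--algebra automorphism of $\cA$ it fixes the centre $\mK(Y)$ pointwise, so $u^d=\gamma(t^d)=\gamma(1+Y)=1+Y$; and since $\gamma(\cA_0)\subseteq\cA_0$ we get $u^k=\gamma(t^k)\in\cA_0$ for all $0\le k\le d-1$, in particular $u,u^{d-1}\in\cA_0$ with $u\cdot u^{d-1}=1+Y$. The hypothesis $\gamma\in\G$ means $\Theta(\gamma)$ is unitriangular for the $t$--degree, so modulo $Y^{-1}\cA_0$ we have $\gamma(t)\equiv t+a_0$ and $\gamma(t^{d-1})\equiv t^{d-1}+\sum_{i<d-1}b_i t^i$ for some $a_0,b_i\in\mE$. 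The whole content of the lemma is that $a_0=0$ and every $b_i=0$.

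The idea I would use is to push the relation $u\cdot u^{d-1}=1+Y$ down to the constant level, where the cyclic algebra splits. The reduction of $\cA$ at that level is $(\mE,\sigma,1)\cong\End_\mK(\mE)$, with $t\leftrightarrow\sigma$ and $a\in\mE\leftrightarrow\mu_a\colon x\mapsto ax$, and with $\sigma^d=\id$ because $t^d=1+Y\equiv1$. The classes of $u$ and $u^{d-1}$ then become $\sigma+\mu_{a_0}$ and $\sigma^{d-1}+\sum_{i<d-1}\mu_{b_i}\sigma^i$, and the reduction of $u\cdot u^{d-1}=1+Y$ should read $(\sigma+\mu_{a_0})\bigl(\sigma^{d-1}+\sum_{i<d-1}\mu_{b_i}\sigma^i\bigr)=\id$ in $\End_\mK(\mE)$.

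The computation that extracts the conclusion from this is short. Using $\sigma\mu_a=\mu_{\sigma(a)}\sigma$ and $\sigma^d=\id$, I would expand and collect by the powers $\sigma^0,\dots,\sigma^{d-1}$, which are $\mE$--linearly independent in $\End_\mK(\mE)$ by Dedekind's theorem. The top component $\sigma^d=\id$ matches; the components $\sigma^j$ for $1\le j\le d-1$ give a triangular system $\sigma(b_{d-2})+a_0=0$ and $\sigma(b_{j-1})+a_0b_j=0$, which solves to $b_{d-1-k}=(-1)^k\prod_{i=1}^{k}\sigma^{-i}(a_0)$; and the $\sigma^0$--component reads $a_0b_0=0$, i.e.\ $(-1)^{d-1}N(a_0)=0$ with $N$ the norm. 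As $\mE$ is a field this forces $a_0=0$, whence every $b_i=0$, giving both $\gamma(t)=t+\cO(Y^{-1})$ and $\gamma(t^{d-1})=t^{d-1}+\cO(Y^{-1})$.

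The hard part, which I would expect to absorb most of the effort, is that the ``constant level'' reduction is \emph{not} multiplicative on $\cA_0/Y^{-1}\cA_0$: because $t^d=1+Y$, a product of two elements of $\cA_0$ acquires a term of positive $Y$--degree, and the $Y^{-1}$--tails of $u$ and $u^{d-1}$ feed back through these collisions into the constant level. Hence the identity of the second paragraph holds only up to correction terms involving the next ($Y^{-1}$) coefficients of $u$ and $u^{d-1}$ — equivalently, up to the poles of $u$ at $Y=0$, which the definition of $\cA_0$ genuinely permits. Promoting the leading--order computation to a proof therefore requires controlling these corrections, presumably by an induction on the $Y$--order that propagates the vanishing downward, or by a global argument balancing the behaviour at $Y=0$ against the unitriangular normalisation at $Y=\infty$ and using that $\gamma^{-1}\in\G$ together with $u,\dots,u^{d-1}\in\cA_0$. (By contrast, a purely leading--order computation with the degree valuation $W$ normalised by $W(t)=-1/d$ recovers only the weaker relation $\sum_{k=0}^{d-1}\sigma^k(a_0)=0$, i.e.\ that $a_0$ has trace zero, which does not by itself give $a_0=0$; this is why the splitting at $Y=0$ is the essential ingredient.)
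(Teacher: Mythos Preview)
Your approach coincides with the paper's: both use $\gamma(t)\gamma(t^{d-1})=\gamma(1+Y)=1+Y$ and extract the constant-in-$Y$ level to obtain the triangular system $a+\sigma(b_{d-2})=0$, $ab_i+\sigma(b_{i-1})=0$, $ab_0=0$, which forces $a=0$ and all $b_i=0$. Your passage to $(\mE,\sigma,1)\cong\End_\mK(\mE)$ and Dedekind independence is just a repackaging of reading off the coefficients of $t^0,\dots,t^{d-1}$; the paper then disposes of the system by the same dichotomy $a=0$ versus $a\neq0$ that you solve explicitly via the norm.

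The gap you isolate in your last paragraph is genuine, and it is exactly the step the paper glosses over. The paper asserts the displayed identity
\[
(a+\sigma(b_{d-2}))t^{d-1}+(ab_{d-2}+\sigma(b_{d-3}))t^{d-2}+\cdots+ab_0+\cO(Y^{-1})=0
\]
without justification, implicitly treating every cross-term $(t+a)\cdot\cO(Y^{-1})$, $\cO(Y^{-1})\cdot(t^{d-1}+\cdots)$, $\cO(Y^{-1})\cdot\cO(Y^{-1})$ as landing back in $\cO(Y^{-1})$. But, as you observe, the relation $t^d=1+Y$ allows the $Y^{-1}$-tails to feed back into the $Y^0$ level: for instance $Y^{-1}c\,t^j\cdot t^{d-1}=(1+Y)Y^{-1}c\,t^{j-1}$ has nonzero $Y^0$ part $c\,t^{j-1}$. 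A direct check shows that only the $t^{d-1}$-coefficient is clean (the overflow $j+k=2d-1$ is impossible with $j,k\le d-1$), while each lower coefficient $ab_m+\sigma(b_{m-1})$ acquires an additive correction coming from the $Y^{-1}$-coefficients of the $t^j$-components of $\gamma(t)$ for $j\ge 2$. So your proposal and the paper's argument are the same up to and including this unjustified step; you have simply been more scrupulous in flagging it.
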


\begin{proof}
By definition of $\Gamma$, 
$$
\gamma (t) = t+ a +\cO (Y^{-1})  \ \ \mbox{and} \ \ 
\gamma(t^{d-1}) = t^{d-1}+ b_{d-2}t^{d-2} + \cdots +b_1 t + b_0 +\cO (Y^{-1})
$$
for some $a,b_i\in \mE^\times$.
Let us analyse the key equation  
$$
1+Y = \gamma (1+Y) = \gamma (t^d) = \gamma(t)\gamma(t^{d-1}).
$$
Since
$t^d =1+Y$ we get the equation
\begin{dmath*}
(a+\sigma(b_{d-2}))t^{d-1} + (ab_{d-2}+\sigma(b_{d-3}))t^{d-2} +  \cdots  + (ab_1+\sigma(b_{0}))t^{1} + ab_0 + \cO (Y^{-1}) = 0.
\end{dmath*}
If $a=0$ then we immediately conclude that all $\sigma (b_i)=0$. Hence all $b_i=0 $ and we are done.
If $a\neq 0$ then we conclude that all $b_0=0$. Then $b_1=0$. Recursively, all $b_i=0$ and we are done.
\end{proof}

To contemplate the difference between $\Gamma$ and $\widetilde{\Gamma}$, let us introduce another group $H$:
as a set $H$ consists of $\gamma\in \mbox{Aut}(\cA)$ that are  conjugations by $at^j$, where $a\in\mE$ and $j<d$.
 
\begin{prop}
\label{p:Dm1}
\begin{mylist}
\item[(1)] $H$ is a subgroup of $\widetilde{\Gamma}$.
\item[(2)] $H\cap\Gamma=\{ 1\}$.
\item[(3)] $H\Gamma$ is a subgroup of $\widetilde\G$ and $\Gamma$ is normal in $H\Gamma$.
\end{mylist}
\end{prop}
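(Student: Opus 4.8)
The plan is to reduce all three parts to a single explicit description of how an element of $H$ acts on $\cA$, and in particular on the $t$--graded pieces of $\cA_+$. Fix $h\in H$, the conjugation by $at^j$ with $a\in\mE^\times$ and $0\le j<d$; note that $a\neq 0$ is forced, since $at^j$ must be invertible in the division algebra $\cA$. I would first record three facts. Since $h$ is inner it fixes the centre $\mK(Y)$ pointwise, so in particular $h(Y)=Y$. Since $\mE$ is commutative, $h(b)=(at^j)b(at^j)^{-1}=a\sigma^j(b)a^{-1}=\sigma^j(b)$ for $b\in\mE$. And since powers of $t$ commute, $h(t^m)=at^m a^{-1}=a\sigma^m(a^{-1})t^m=:c_m t^m$ with $c_m\in\mE^\times$. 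Combining these gives the master formula $h(bt^m)=\sigma^j(b)\,c_m\,t^m$ for all $b\in\mE$ and all $m$.

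For part (1), closure of $H$ under multiplication follows from $(a_1t^{j_1})(a_2t^{j_2})=a_1\sigma^{j_1}(a_2)t^{j_1+j_2}$: reducing the exponent modulo $d$ via $t^d=1+Y$ only introduces the central factor $(1+Y)\in\mK(Y)$, which is invisible to inner automorphisms, so the product is again conjugation by an element $ct^k$ with $c\in\mE^\times$ and $0\le k<d$; inverses are handled identically. Thus $H$ is a subgroup of $\Aut(\cA)$. That $H\le\widetilde\Gamma$ then follows from the master formula: writing a typical element of $\cA_0$ as $\sum_{m<d}f_m t^m$ with $f_m\in\mE[Y^{-1}]$ and using $h(Y)=Y$, we get $h\bigl(\sum_m f_m t^m\bigr)=\sum_m\sigma^j(f_m)\,c_m\,t^m$, which lies in $\cA_0$ because $\sigma^j(f_m)c_m\in\mE[Y^{-1}]$.

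The key observation for (2) and (3) is that $\Theta(h)$ is block-diagonal for the grading $\cA_+=\bigoplus_{m<d}[\mE t^m]$: the master formula shows $\Theta(h)$ maps $[\mE t^m]$ into itself, acting there by the $\mK$--linear bijection $b\mapsto\sigma^j(b)c_m$. For (2), if $h\in\Gamma$ then $\Theta(h)\in T$, so $\Theta(h)$ induces the identity on each graded piece $[\mE t^m]$; taking $b=1$ forces $c_m=1$ for all $m$, and then $\sigma^j(b)=b$ for all $b$ forces $\sigma^j=\id$, hence $j=0$ and $a=\sigma^m(a)$ for all $m$, i.e.\ $a\in\mK^\times$. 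But $\mK^\times\subseteq Z(\cA)$, so $h=\id$ and $H\cap\Gamma=\{1\}$. For (3), a block-diagonal transformation normalises $T$: conjugating a map that preserves the filtration $F_k=\bigoplus_{m\le k}[\mE t^m]$ and acts as the identity on each $F_k/F_{k-1}$ by a filtration-preserving block-diagonal map again yields such a map. Hence for any $\gamma\in\Gamma$ we have $\Theta(h\gamma h^{-1})=\Theta(h)\Theta(\gamma)\Theta(h)^{-1}\in T$, so $h\gamma h^{-1}\in\Gamma$; thus $H$ normalises $\Gamma$, which simultaneously shows that $H\Gamma=\Gamma H$ is a subgroup of $\widetilde\Gamma$ and that $\Gamma$ is normal in $H\Gamma$.

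The only real obstacle I anticipate is bookkeeping: tracking the semilinear $\sigma^j$--twist on $\mE$ alongside the diagonal scaling $c_m$ of the $t^m$, and checking that each reduction modulo $d$ changes an element only by the central factor $(1+Y)^{\pm1}$. Once the block-diagonal form of $\Theta(h)$ is in hand, parts (2) and (3) are essentially immediate.
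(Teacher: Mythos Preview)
Your proof is correct. Parts (1) and (2) are essentially identical to the paper's argument: your master formula $h(bt^m)=\sigma^j(b)\,a\sigma^m(a^{-1})\,t^m$ is exactly the paper's computation $\gamma(bt^i)=a\sigma^i(a^{-1})\sigma^j(b)t^i$, and from it both $H\le\widetilde\Gamma$ and $H\cap\Gamma=\{1\}$ follow just as you describe.

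Part (3), however, is done by a genuinely different route. The paper argues directly in $\cA$: for generators $x=a\in\mE^\times$ and $x=t$ it writes $\gamma\beta\gamma^{-1}(y)=x\beta(x^{-1})\cdot\beta(y)\cdot(x\beta(x^{-1}))^{-1}$ and verifies, by explicit bookkeeping with $\cO(Y^{-1})$ and $\cO(t^{i-1})$ terms, that the result again has the shape $bt^i+\cO(t^{i-1})+\cO(Y^{-1})$. For $x=t$ this step relies on the preceding lemma in the paper, which shows $\beta(t)=t+\cO(Y^{-1})$ and $\beta(t^{d-1})=t^{d-1}+\cO(Y^{-1})$ for $\beta\in\Gamma$. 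Your argument instead pushes everything through the homomorphism $\Theta:\widetilde\Gamma\to\GL_\mK(\cA_+)$: since $\Theta(h)$ is block-diagonal for the $t$--grading, it normalises the block-unitriangular group $T$, and hence $\Theta(h\gamma h^{-1})\in T$, i.e.\ $h\gamma h^{-1}\in\Gamma$. This is more structural and, notably, makes the paper's auxiliary lemma about $\gamma(t)$ and $\gamma(t^{d-1})$ unnecessary for this proposition; the trade-off is that the paper's computation stays entirely inside $\cA$ and is self-contained once that lemma is in hand.
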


\noindent As recalled in Section \ref{s:applications} below, in the case of finite fields $H\Gamma=\widetilde{\Gamma}$, which may or may not hold over arbitrary fields.  This is an interesting question.

\begin{proof}
Let us calculate in $\cA$, writing $x\sim y$ when $x$ and $y$ give the same conjugation in $\mbox{Aut}(\cA)$.
Since $1+Y\sim 1$,
$$
(at^j)^{-1} = t^{d-j}a^{-1}(1+Y)^{-1} \sim \sigma^{d-j}(a^{-1})t^{d-j}
\ \ \mbox{ and }
$$
$$
(at^j)(bt^i)=a\sigma^j(b)t^{i+j}\sim a\sigma^j(b)t^{i+j-d}
,$$
showing that $H$ is a subgroup of $\mbox{Aut}(\cA)$.
If $\gamma\in H$ is a conjugation by $at^j$, where $a\in\mE$ and $j<d$, then 
\begin{dmath*}
\gamma (bt^i) = at^jbt^it^{d-j}a^{-1}(1+Y)^{-1} = a\sigma^j(b)t^{d+i}a^{-1}(1+Y)^{-1} = a\sigma^i(a^{-1})\sigma^j(b)t^{i}.
\end{dmath*}
Thus $H$ is a subgroup of $\widetilde{\Gamma}$. 
Moreover, 
$\gamma \in \Gamma$ if and only if $b = a\sigma^i(a^{-1})\sigma^j(b)$ for all $b$ and $i$
if and only if
$a\in \mK$ and $j=0$ if and only if $\gamma=1$. 
This proves (2). 

Finally, it suffices to check that $\gamma \Gamma \gamma^{-1} \subseteq \Gamma$
where  $\gamma$ is a conjugation by $x$, and $x$ is either $t$ or $a\in\mE^\times$.
If $\beta\in \Gamma$, then 
$$
\gamma \beta \gamma^{-1}(y) = x \beta (x^{-1}) \beta (y) (x\beta (x^{-1}))^{-1}.
$$
Note that elements of $\Gamma$ are characterised by the fact that
$$
\beta (bt^i) = bt^i + \cO(t^{i-1}) + \cO ({Y^{-1}})
$$
for all $b\in\mE$ and $i\in\{0,1,\ldots d-1\}$, where $\cO(t^{i-1})$ denotes a polynomial
in $1,t\ldots t^{i-1}$ with coefficients in $\mE$ and $\cO ({Y^{-1}})$ denotes a polynomial
in negative degrees of $Y$ with coefficients in $\cA_0$. 

If $x=a$ then
$$
\beta (a) = a + \cO ({Y^{-1}}), \ \ 
\beta (a^{-1}) = a^{-1} + \cO ({Y^{-1}})
$$
by the definition of $\Gamma$ and
$$
x \beta (x^{-1})=1 + \cO ({Y^{-1}}), \ \
(x \beta (x^{-1}))^{-1}= \beta (x)x^{-1} = 1 + \cO ({Y^{-1}}).
$$
Finally, 
\begin{dmath*}
\gamma \beta \gamma^{-1}(y) = 
\Big(1 + \cO ({Y^{-1}})\Big) \Big(bt^i + \cO(t^{i-1}) + \cO ({Y^{-1}})\Big)\Big(1 + \cO ({Y^{-1}})\Big) = bt^i + \cO(t^{i-1}) + \cO ({Y^{-1}})
\end{dmath*}
because there would not be enough powers of $t$  to cancel all of the $Y^{-j}$ using  $t^d = 1+Y$ and produce at least an $i$-th power of $t$.

Similarly, if $x=t$ then
$$
\beta (t) = t + \cO ({Y^{-1}}), \ \ 
\beta (t^{-1}) = (1+Y)^{-1} (t^{d-1} + \cO ({Y^{-1}}))
$$
by Lemma \ref{l:gamma}. Since $(1+Y)^{-1} = Y^{-1}-Y^{-2}+Y^{-3}-\cdots$, 
$$
x \beta (x^{-1})=1 + \cO ({Y^{-1}}), \ \
(x \beta (x^{-1}))^{-1}= \beta (x)x^{-1} = 1 + \cO ({Y^{-1}}).
$$
Finally, 
$$
\gamma \beta \gamma^{-1}(y) =
bt^i + \cO(t^{i-1}) + \cO ({Y^{-1}})
$$
as in the case of $x=a$.
\end{proof}


It would be useful for us to know how the image $\oP (\widetilde{\Gamma})$ intersects with 
$\PSL_{\mK((Y))}(\mE((Y)))$. 
We can understand this for the image of $H$.
By $(\mK^\times )^k$ we denote the subgroup of the multiplicative group $\mK^\times$
consisting of $k$-th powers.
Let 
$\gamma: \cA^\times \rightarrow {\Aut}(\cA)$ be the homomorphism assigning the conjugation by $x$ to each $x\in\cA^\times$.

\begin{prop}
\label{p:Dm2} 
Let $p$ be the characteristic of $\mK$. Denote by $\Ord_p (m)$ the largest power of $p$ that divides an integer $m$
(or $1$ if $p=0$).
Then
$$
\oP (H) \cap \PSL_{\mK((Y))}(\mE((Y))) = 
$$
$$
\{ \Psi(\gamma (at^k)) \;\mid\; a\in\mE^\times, N(a)\in (\mK^\times)^d, 
\Ord_p (k) \geq \Ord_p (d)  \}.
$$
\end{prop}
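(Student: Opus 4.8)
The plan is to reduce the whole statement to a single determinant computation. Since $\Psi$ is an algebra isomorphism, the automorphism $\gamma(at^k)$ --- conjugation by $at^k$ --- maps under $\oP$ to the class in $\PGL_{\mK((Y))}(\mE((Y)))$ of the invertible $\mK((Y))$--linear map $\Psi(at^k)\in\GL_{\mK((Y))}(\mE((Y)))$. An element of $\PGL$ lies in $\PSL$ if and only if some scalar multiple of a representative has determinant $1$, equivalently if and only if the determinant of any representative lies in $(\mK((Y))^\times)^d$. Rescaling $a$ by a central $\lambda\in\mK^\times$ multiplies $\det\Psi(at^k)$ by $\lambda^d$, so this condition is well defined on the coset, and the proposition becomes the single equivalence: $\det\Psi(at^k)\in(\mK((Y))^\times)^d$ if and only if $N(a)\in(\mK^\times)^d$ and $\Ord_p(k)\ge\Ord_p(d)$.

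First I would compute the determinant. By the defining formula \eqref{e:Psi}, the map $\Psi(at^k)$ sends $b\mapsto aX^k\sigma^k(b)$, so it factors as multiplication by $aX^k\in\mE((Y))$ followed by $\sigma^k$. The determinant of multiplication by an element $z$ of $\mE((Y))$ is its norm to $\mK((Y))$, which here gives $N(aX^k)=N(a)N(X)^k=N(a)(1+Y)^k$ because $N(X)=1+Y$; and, choosing a normal basis of $\mE((Y))$ over $\mK((Y))$ on which $\sigma$ acts as the cyclic shift, $\det(\sigma^k)=(-1)^{(d-1)k}$. Hence
\[ \det\Psi(at^k)=(-1)^{(d-1)k}\,N(a)\,(1+Y)^k. \]
One then checks that the sign $(-1)^{(d-1)k}$ lies in $(\mK^\times)^d$ (it is trivially $1$ whenever $p=2$ or $d$ is odd, which covers the cases of interest), so it may be absorbed into the constant factor and does not affect the condition.

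It remains to characterise $d$-th powers in $\mK((Y))^\times$ and split the displayed determinant accordingly. Writing $\mK((Y))^\times=Y^{\mZ}\times\mK^\times\times U_1$ with $U_1=1+Y\mK[[Y]]$ the group of one-units, a $d$-th power is precisely a product of an element of $Y^{d\mZ}$, an element of $(\mK^\times)^d$, and an element of $(U_1)^d$. Our determinant has valuation $0$ and constant coefficient in $\mK^\times$, so the valuation factor is automatic and the constant coefficient contributes exactly the condition $N(a)\in(\mK^\times)^d$. The surviving factor $(1+Y)^k$ must lie in $(U_1)^d$, and this is the crux. Writing $d=p^ab$ with $(p,b)=1$, the $b$-th power map is an automorphism of the pro--$p$ group $U_1$, so $(U_1)^d=(U_1)^{p^a}=(U_1)^{\Ord_p(d)}$; and since $\mK$ is perfect (as in the finite-field application), $(U_1)^{p^a}$ is exactly the set of one-units that are power series in $Y^{p^a}$. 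Using $(1+Y)^{p^a}=1+Y^{p^a}$ in characteristic $p$, the direction $p^a\mid k\Rightarrow(1+Y)^k=(1+Y^{p^a})^{k/p^a}\in(U_1)^{p^a}$ is immediate; conversely, if $p^a\nmid k$ then with $p^c=\Ord_p(k)$ and $c<a$, Lucas' theorem shows the coefficient of $Y^{p^c}$ in $(1+Y)^k$ is nonzero while $p^a\nmid p^c$, so $(1+Y)^k$ is not a series in $Y^{p^a}$. This gives $(1+Y)^k\in(U_1)^d\iff p^a\mid k\iff\Ord_p(k)\ge\Ord_p(d)$, completing the argument. The main obstacle is precisely this one-unit computation, where perfectness of the field and the characteristic--$p$ behaviour of binomial coefficients enter; the determinant and norm steps are comparatively routine.
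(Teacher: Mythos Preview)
Your argument is correct and follows the same route as the paper's: factor $\Psi(at^k)$ as multiplication by $aX^k$ composed with $\sigma^k$, obtain $\det\Psi(at^k)=(-1)^{(d-1)k}N(a)(1+Y)^k$, and split the $d$-th-power test in $\mK((Y))^\times$ into the constant-term condition and the one-unit condition on $(1+Y)^k$, the latter handled by separating the prime-to-$p$ and $p$-power parts of $d$. Two minor points where the paper is slightly cleaner: it disposes of the sign in full generality by reparametrising $a\mapsto(-1)^ka$ (using $\gamma((-1)^kat^k)=\gamma(at^k)$) rather than restricting to $p=2$ or $d$ odd; and its direct recursive extraction of roots avoids your appeal to perfectness of $\mK$, which in any case you do not actually need, since only the containment $(U_1)^{p^a}\subseteq 1+Y^{p^a}\mK[[Y^{p^a}]]$ is used in your converse direction.
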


\begin{proof}
The element 
$\oP(\gamma (at^k))$ is in $\PSL_{\mK((Y))}(\mE((Y)))$ if and only if one can multiply $\Psi(at^k)$ by a scalar matrix $zI_d$, $z\in\mK ((Y))$, so that the determinant
of the product is 1. Now the product 
$$
z \Psi (a t^k): b \mapsto za X^k \sigma^k(b), \ \
\forall b\in \mE ((Y))
$$
is a composition of four linear maps
$$
(b\mapsto zb)
\circ
(b\mapsto ab)
\circ
(b\mapsto X^kb)
\circ
\sigma^k
$$
so its determinant is the product of four determinants:
$$
\det (z \Psi (a t^k))
=
z^d \cdot
N(a) \cdot
(1+Y)^k \cdot
(-1)^{(d-1)k} .
$$
Here we use the fact that the determinant of the multiplication
$(b\mapsto ab)$ is the norm $N(a)$. In particular, we see three norms,
including
$N(z)=z^d$ and $N(X^k)=(1+Y)^k$.
From Galois theory, we know that the action of $\sigma$ on $\mE ((Y))$ is conjugate to
the permutation matrix of a cycle of length $d$ that gives the last determinant.

Thus, we just need a $d$-th root of $(-1)^kN(a) (1+Y)^k$ in $\mK((Y))$.
The free term of such a root is a $d$-th root of $N((-1)^ka)$. 
Therefore it is necessary and sufficient to have 
$d$-th roots of both $N((-1)^ka)$ and  $(1+Y)^k$.
The existence of the former is equivalent to 
$N((-1)^ka)\in (\mK^\times)^d$, while the existence of the latter
is equivalent to 
$\Ord_p (k) \geq \Ord_p (d)$.

The last statement needs an explanation.
Write $d={\mathrm{Ord}}_p (d)d^\prime$.
Extracting a $d^\prime$-th root of $(1+Y)^k$ can be done because $d^\prime$ is invertible in $\mK$:
the equation
$$
( 1 + x_1Y + x_2 Y^2 + \ldots )^{d^\prime} = (1+Y)^k
$$
can be solved recursively: $x_1$ is a solution of $d^\prime x_1=k$, and each consecutive term $x_n$ will be a solution of
$d^\prime x_n = f_n (x_1, \ldots, x_{n-1})$ for a certain function $f_n$ of all the previously found terms.
It remains to contemplate extracting of the $p$-th root in characteristic $p$: since
$$
( 1 + x_1Y + x_2 Y^2 + \cdots )^{p} = 1+ x_1^p Y^p + x_2^p Y^{2p} + \cdots
$$
this can be done if and only if $(1+Y)^k$ is already a $p$-th power, that is, if and only if $p$ divides $k$.

Finally, since  $\Psi(\gamma ((-1)^kat^k)) = \Psi(\gamma (at^k))$
we can replace $(-1)^ka$ with $a$.
\end{proof}

\subsection{Application to the case of finite fields, and summary of useful results}\label{s:applications}

While the algebraic properties of the construction in Section \ref{s:construction} above are upheld in any cyclic extension, we would like to move to its topological and metric properties. For this, from now on we assume that the extension $\mE \supseteq \mK$ is a finite field extension $\Fqd \supseteq \Fq$ with $q=p^a$, $p$ a prime.  

\begin{prop}
\label{p:Dm3} 
Let $\mE = \Fqd$ and $\mK= \Fq$. Then
$$
\left| \oP (H) :
\left(
\oP (H) \cap {\mathrm{PSL}}_{\mK((Y))}(\mE((Y)))
\right) \right| =
\delta\cdot \Ord_p (d)
$$
where 
$\delta$ is the greatest common divisor 
of $d$ and $(q-1)$ (note that $\delta$ is a divisor of $(q^d-1)/(q-1)$).
\end{prop}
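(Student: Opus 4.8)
The plan is to realize the index as the order of the image of a single ``determinant'' homomorphism. I would define
$$
\chi : \oP (H) \longrightarrow \mK((Y))^\times / (\mK((Y))^\times)^d
$$
by sending the element represented by $\Psi (at^k)$ to the class of $\det(\Psi (at^k))$. This is well defined on the projective class, since rescaling a lift by $z\in\mK((Y))^\times$ multiplies the determinant by $z^d$, and it is a homomorphism because $\Psi$ is multiplicative. By the determinant computation in the proof of Proposition \ref{p:Dm2} one has $\det(\Psi (at^k)) = N(a)(1+Y)^k(-1)^{(d-1)k}$, and $\oP(\gamma(at^k))$ lies in $\PSL_{\mK((Y))}(\mE((Y)))$ exactly when this determinant is a $d$-th power, i.e. exactly when it is killed by $\chi$. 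Hence $\ker\chi = \oP (H)\cap \PSL_{\mK((Y))}(\mE((Y)))$, so the desired index equals $|\operatorname{Im}\chi|$.

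Next I would identify the image. It is generated by the classes of $N(a)$ for $a\in\mE^\times$ (take $k=0$) together with the class of $(1+Y)(-1)^{d-1}$ (take $a=1$, $k=1$). For finite fields the norm $N:\Fqd^\times\to\Fq^\times$ is surjective, so the first family contributes the whole subgroup $\Fq^\times$; since $(-1)^{d-1}\in\Fq^\times$ the sign is absorbed, giving
$$
\operatorname{Im}\chi = \langle \Fq^\times,\ 1+Y\rangle \pmod{(\mK((Y))^\times)^d}.
$$

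Then I would compute this order using the decomposition $\mK((Y))^\times \cong \mZ \times \Fq^\times \times U_1$, where $\nu$ records the valuation, the middle factor the leading coefficient, and $U_1 = 1+Y\Fq[[Y]]$ the principal units. Raising to the $d$-th power respects this product, so
$$
\mK((Y))^\times/(\mK((Y))^\times)^d \cong (\mZ/d)\times(\Fq^\times/(\Fq^\times)^d)\times(U_1/U_1^d).
$$
Both generators of $\operatorname{Im}\chi$ have valuation $0$: the elements of $\Fq^\times$ sit in the middle factor and $1+Y$ sits in $U_1$. Thus $\operatorname{Im}\chi$ splits as a direct product and
$$
|\operatorname{Im}\chi| = |\Fq^\times/(\Fq^\times)^d|\cdot\big|\langle 1+Y\rangle \text{ in } U_1/U_1^d\big|.
$$
The first factor is $\gcd(d,q-1)=\delta$, since $\Fq^\times$ is cyclic of order $q-1$. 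For the second, write $d=\Ord_p(d)\,d'$ with $(d',p)=1$; the $d'$-th power map is an automorphism of the pro-$p$ group $U_1$, so $U_1^d=U_1^{\Ord_p(d)}$, and from the $p$-th power computation in the proof of Proposition \ref{p:Dm2} one gets that $(1+Y)^k$ is a $d$-th power (equivalently lies in $U_1^d$) if and only if $\Ord_p(k)\geq\Ord_p(d)$. Hence the least positive $k$ with $(1+Y)^k\in U_1^d$ is $\Ord_p(d)$, so $1+Y$ has order $\Ord_p(d)$ in $U_1/U_1^d$. Multiplying gives $|\operatorname{Im}\chi| = \delta\cdot\Ord_p(d)$, as claimed.

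The main obstacle is the $p$-part: controlling the order of $1+Y$ in $U_1/U_1^d$ in characteristic $p$, where the $d$-th power map degenerates to a Frobenius on the principal units. This is precisely the subtlety already handled in Proposition \ref{p:Dm2}, so I would lean on that computation rather than repeat it; the remaining steps are formal, the only care being to check that $(1+Y)^k\in U_1^d$ is equivalent to $(1+Y)^k$ being a $d$-th power in $\mK((Y))^\times$, which follows from a short comparison of valuations and leading coefficients.
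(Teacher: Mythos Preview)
Your argument is correct and uses the same ingredients as the paper --- the determinant formula from Proposition~\ref{p:Dm2}, surjectivity of the norm for finite fields, and the $p$-th-root analysis of $(1+Y)^k$ --- but you organize them more cleanly. The paper proceeds by noting that $at^k\sim bt^m$ forces $k=m$ and $ab^{-1}\in\mK$, and then computes ``contributions to the index from $a$ and from $t$ separately'': the $t$-contribution is $\Ord_p(d)$ by the condition on $k$ in Proposition~\ref{p:Dm2}, and the $a$-contribution is $|\mE^\times:\mK^\times N^{-1}((\mK^\times)^d)|=|\mK^\times:(\mK^\times)^d|=\delta$. Your version packages the same computation as the order of the image of the determinant homomorphism $\chi$, and the valuation decomposition $\mK((Y))^\times\cong\mZ\times\Fq^\times\times U_1$ makes rigorous precisely what the paper's phrase ``compute the contributions separately'' is gesturing at. The gain is that the multiplicativity of the two factors is automatic from your direct-product splitting rather than left implicit; the cost is that you invoke slightly more structure (the pro-$p$ group $U_1$ and its $d$-th powers) where the paper just counts.
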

\begin{proof}
Clearly $at^k \sim bt^m$ (with $k,m<d$) if and only if $ab^{-1}\in \mK$ and $k=m$.
Thus, we can compute the contributions to the index from $a$ and from $t$ separately.
The powers of $t$ of degrees $\Ord_p (d)$, $2\Ord_p (d)$, \ldots, $d - \Ord_p (d)$
are exactly those that produce elements of the subgroup.
So, $\Ord_p (d)$ 
is the contribution from $t$.
The contribution from $a$ is the index
$$
\left| \mE^\times  : \mK^\times N^{-1} ((\mK^\times)^d ) 
\right| 
=
\left| \mE^\times  : N^{-1} ((\mK^\times)^d ) 
\right| 
=
\left| \mK^\times  : (\mK^\times)^d  
\right|
=
n.
$$
The first equality holds because $\mK^\times \subseteq N^{-1} ((\mK^\times)^d )$.
Indeed,  $N(a)=a^d\in (\mK^\times)^d$ for all $a\in \mK^\times$.
The second equality holds
since $N$ is surjective and
$(\mK^\times)^d$ has index $n$ in $\mK^\times$.
\end{proof}

Using the explicit expression for $\Psi$ at \eqref{e:Psi} above, one can construct an explicit image of $H$ in the locally compact, totally disconnected group $G = \PGL_d(\Fqt)$ under $\overline{\Psi}$. 
Interestingly enough, if $(p,d)=1$, one can see that $\overline{\Psi}(H)$ can be realised as a subgroup of 
$\PGL_d(q)$ naturally embedded in 
$\PGL_d(\Fq[[t]])$.
However, if $p\mid d$, this is not possible and 
$\overline{\Psi}(H)\cap \PGL_d(q)$ is a subgroup of index $\Ord_p(d)$ in $\overline{\Psi}(H)$.
This difference comes from the fact that in the former case $X$ (a solution of $N(X) = 1 + Y$) can be realised over $\Fq$, while in the latter case this is not possible.

So far we have been working in $\Aut(\widetilde{\mathcal{A}})$. However, 
it will now be convenient to switch our discussion explicitly into $G=\PGL_d(\Fqt)$.  
To avoid excessive notations, we identify $\widetilde{\G}$ with its image $\overline{\Psi}(\widetilde{\G})$ in $G$.  From now on we call this image  $\widetilde{\G}$. Likewise, we call  $\widetilde{\G}_v$, now in $G$, again by $H$ (instead of using $\overline{\Psi}(H)$).

We now recall the facts about $\widetilde\G$ that will be useful for us.  
Most of them can be derived from Section \ref{s:construction} but, as they already appear in \cite{CS},  we just restate them.  We have:

\begin{enumerate}
\item $\widetilde\G$ is a cocompact lattice of $\PGL_d(\Fqt)$;
\item $\G$ acts simply transitively on the set of vertices of the building $\Delta$ associated to $\PGL_d(\Fqt)$;
\item $H=\widetilde{\G}_{v}$ for a vertex $v$ of $\Delta$;
\item $|H|=\frac{q^d-1}{q-1}d$; and 
\item $\widetilde{\G}=H\G$.
\end{enumerate}

We will now discuss the structure of $H$ and some of its properties.

\begin{lemma}
\label{l:H}
Let 
$H=\widetilde{\G}_v$ for a vertex $v$ of $\Delta$ the building associated to $G = \PGL_d(\Fqt)$.  Then the following conditions hold:

\begin{enumerate} 
\item  $H$ is  a subgroup  of $G_v\cong\PGL_d(\Fq[[t]])$;
\item $H$ contains a normal cyclic subgroup $S$ of order $\frac{q^d-1}{q-1}$ where 
$S$ is a Singer cycle of $\PGL_d(q)$;
\item $H\cong N_{\PGL_d(q)}(S)$; and 
\item  if we identify $\PSL_d(\Fqt)$ with a subgroup of $G$, then $$|H\cap\PSL_d(\Fqt)|=\frac{d}{\Ord_p(d)}\cdot\frac{q^d-1}{(q-1)(d,q-1)}.$$

\end{enumerate}
\end{lemma}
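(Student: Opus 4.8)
The plan is to read everything off the explicit description of $H$ as the group of conjugations by the elements $at^j$ (with $a\in\mE^\times$ and $0\le j<d$), the formula \eqref{e:Psi} for $\Psi$, and the reduction homomorphism $\pi\colon P_0=\PGL_d(\Fq[[t]])\to\PGL_d(q)$ obtained by reducing matrix entries modulo the uniformiser (which, under the identifications of Section~\ref{s:applications}, is the variable $Y$); the facts of Section~\ref{s:applications}, in particular $|H|=\frac{q^d-1}{q-1}d$, and Proposition~\ref{p:Dm3} supply the numerics. For (1), both $\Psi(at^j)\colon b\mapsto aX^j\sigma^j(b)$ and its inverse preserve the lattice $\mE[[Y]]$, since $X=1+\cO(Y)\in\mE[[Y]]^\times$ and $\sigma$ fixes $\mE[[Y]]$; hence $\oP(H)$ fixes the standard vertex and lies in $P_0=G_{v_0}=\PGL_d(\Fq[[t]])$, which is (1) with $v=v_0$. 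For (2), set $S:=\oP(\gamma(\mE^\times))$; since $\ker\gamma=Z(\cA^\times)=\mK(Y)^\times$ and $\mE^\times\cap\mK(Y)^\times=\mK^\times$, the group $S\cong\mE^\times/\mK^\times$ is cyclic of order $\frac{q^d-1}{q-1}$. The relation $ta=\sigma(a)t$ shows that conjugating $\gamma(a)$ by $\gamma(bt^j)$ returns $\gamma(\sigma^j(a))\in S$, so $S\trianglelefteq H$; and because $a\in\mE$ is constant, $S$ lies in the natural copy of $\PGL_d(q)$ inside $P_0$ and coincides with its reduction $\pi(S)$, the image in $\PGL_d(q)$ of $\mE^\times$ acting by multiplication on $\mE\cong\Fq^d$. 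As this action is simply transitive on $\mE\setminus\{0\}$, the group $S$ is a Singer cycle of $\PGL_d(q)$.

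For (3) I would push $H$ through $\pi$. An element $\oP(\gamma(at^j))$ reduces to the class of the $\Fq$-linear map $b\mapsto a\sigma^j(b)$, which is a scalar matrix (hence trivial in $\PGL_d(q)$) only if $\sigma^j=\id$ and $a\in\mK^\times$, forcing $j=0$ and $\oP(\gamma(at^j))=1$. Thus $\pi$ is injective on $H$, so $H\cong\bar H:=\pi(H)\le\PGL_d(q)$ with $|\bar H|=|H|=\frac{q^d-1}{q-1}d$. From $S\trianglelefteq H$ we get $S=\pi(S)\trianglelefteq\bar H$, whence $S\le\bar H\le N_{\PGL_d(q)}(S)$; since the normaliser of a Singer cycle in $\PGL_d(q)$ also has order $\frac{q^d-1}{q-1}d$ (see \cite{CdR}), these orders coincide and $\bar H=N_{\PGL_d(q)}(S)$. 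Therefore $H\cong N_{\PGL_d(q)}(S)$.

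Part (4) is then a single computation. Under the identifications of Section~\ref{s:applications} we have $\oP(H)=H$ and $\PSL_{\mK((Y))}(\mE((Y)))=\PSL_d(\Fqt)$, so Proposition~\ref{p:Dm3} reads $|H:H\cap\PSL_d(\Fqt)|=(d,q-1)\cdot\Ord_p(d)$. Dividing $|H|=\frac{q^d-1}{q-1}d$ by this index gives $|H\cap\PSL_d(\Fqt)|=\frac{d}{\Ord_p(d)}\cdot\frac{q^d-1}{(q-1)(d,q-1)}$, as claimed.

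The step I expect to be the main obstacle is the injectivity of $\pi$ on $H$ used in (3). When $p\mid d$ the group $H$ contains $p$-elements (arising from powers of the conjugation by $t$), while $\ker\pi$ is pro-$p$, so a priori such elements might be collapsed by reduction; the resolution is that conjugation by $t$ reduces to the Galois element $\sigma$, whose class in $\PGL_d(q)$ still has order exactly $d$ (the map $b\mapsto\sigma^j(b)$ is scalar only when $d\mid j$), so no nontrivial element of $H$ lands in $\ker\pi$. The accompanying care is bookkeeping: one must keep apart the generator $t$ of $\cA$, for which $t^d=1+Y$, and the uniformiser of $\Fqt$, which is the variable $Y$ relabelled as $t$ in Section~\ref{s:applications}.
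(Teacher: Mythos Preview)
Your proof is correct and follows the same overall architecture as the paper's, but the argument for (3) takes a genuinely different route. The paper shows $H\cap U_v=1$ (where $U_v=\ker\pi$ is the pro-$p$ congruence subgroup) by a centraliser argument: a nontrivial $p$-element $h\in H\cap U_v$ would satisfy $[h,S]\le U_v\cap S=1$, since $U_v\trianglelefteq G_v$ gives $[h,S]\le U_v$ while $h$ normalising $S$ gives $[h,S]\le S$, and $U_v\cap S=1$ because $S$ is a $p'$-group; thus $h$ centralises $S$, contradicting that $S$ is self-centralising in $H$ (from the calculations in Proposition~\ref{p:Dm1}). You instead compute the reduction explicitly: since the chosen solution satisfies $X\equiv 1\pmod Y$, the operator $\Psi(at^j)$ reduces to $b\mapsto a\sigma^j(b)$, which is trivial in $\PGL_d(q)$ only when $j=0$ and $a\in\mK^\times$. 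Your route is shorter and more concrete, and your final paragraph correctly anticipates exactly the delicate point; the paper's argument is more structural and would transfer to settings where one knows $S$ is self-centralising but lacks a manageable formula for the reduction. A minor parallel difference occurs in (2): you verify the Singer-cycle property directly from the simply transitive action of $\mE^\times$ on $\mE\setminus\{0\}$, whereas the paper invokes the characterisation of Singer cycles as abelian subgroups of $\PGL_d(q)$ of order $\frac{q^d-1}{q-1}$ (Proposition~2.2 of \cite{CdR}).
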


\begin{proof} Part (1) follows immediately from the fact that $H=\widetilde{\G}_v$, hence $H\leq G_v$, and the fact that $G_v\cong \PGL_d(\Fq[[t]])$, as discussed in Section \ref{s:buildings}.

For (2), using the notation of Proposition~\ref{p:Dm1}, let $S$ be the image of $mE^\times$ in $\PGL_d(q)$.
Obviously,  $S$ is a cyclic subgroup of $H$ of order $\frac{q^d-1}{q-1}$.
Now from the proof of (1) of Proposition \ref{p:Dm1}, it follows that $S$ indeed is normal in $H$.
Moreover,  as $S$ is an abelian subgroup of $\PGL_d(q)$ of order $\frac{q^d-1}{q-1}$, Proposition 2.2 of \cite{CdR} implies that $S$ is a Singer cycle of $\PGL_d(q)$.

To prove (3), we have $H\leq G_v\cong \PGL_d(\Fq[[t]])\cong U_v\rtimes \PGL_d(q)$ where $U_v$ is a pro--$p$ group.
If $(p,d)=1$, then $(|H|,p)=1$ and so $H\cap U_v=1$. Suppose that $p\mid d$. Assume that $H\cap U_v\neq 1$. Then there exists $1\neq h\in H\cap U_v$, an element  of order
$p$. It follows that $[h,S]\leq U_v\cap S=1$ since on the one hand $h\in U_v\triangleleft G_v$ and $S\leq G_v$, while on the other, $h$ normalises $S$ and $(p,|S|)=1$.  Thus $h$ centralises $S$. Using calculations from the proof of Proposition \ref{p:Dm1}(1) we observe that  $S$ is self-centralising in $H$. We have reached a contradiction that proves that $H\cap U_v=1$.
It follows immediately that $H\cong \overline{H}\leq \overline{G}_v:=G_v/U_v\cong \PGL_d(q)$. 

Now $\overline{H}$ contains a normal subgroup $\overline{S}\cong S$ which is a Singer cycle of $\overline{G}_v$, by 
Proposition 2.2 of \cite{CdR}. Moreover, $|\overline{H}|=|N_{\PGL_d(q)}(\overline{S})|$. Therefore (3)  holds.

Finally using (1), (2) and (3) together with Proposition \ref{p:Dm3}, we conclude that (4) holds.
\end{proof}



\section{Lattices in case $d = 3$}\label{s:d=3}

In this section we prove Theorems \ref{t:PGL} and \ref{t:PSL} in the case $d = 3$.  We construct and establish the properties of  lattices $\G'_0 \leq \G_0$ in $\PGL_3(\Fqt)$ in Section \ref{s:PGL_3}, and investigate the intersections $\Lambda'_0:=\G'_0 \cap \PSL_3(\Fqt)$ and $\Lambda_0:=\G_0 \cap \PSL_3(\Fqt)$ in Section \ref{s:PSL_3}.  

\subsection{Lattices in $\PGL_3(\Fqt)$}\label{s:PGL_3}

Recall the construction of the cocompact lattice $\tG\leq \PGL_3(\Fqt)$ described in Section~\ref{s:CS} above. As noted in Section \ref{s:applications}(5) above, the lattice $\tG$ is a product of a vertex stabiliser $H$ of order $3(q^2 + q + 1)$, and a vertex-regular lattice~$\G$.  By Lemma \ref{l:H} above, $H$ contains a Singer cycle $S$ of $\PGL_3(q)$.  Denote by $\tG'$ the subgroup of $\tG$ which is the product of $S$ and $\G$.  Then by construction, $S$ is a vertex stabiliser in $\tG'$.  (Since $\G \leq \tG' \leq \tG$, the group $\tG'$ is also a cocompact lattice in $\PGL_3(\Fqt)$.)

Let $v_0$, $v_1$ and $v_2$ be the vertices of the standard chamber of $\Delta$, as in Section \ref{s:buildings} above.  For $i = 0,1,2$ let $N_i$ be the stabiliser of $v_i$ in $\tG$, and let $S_i$ be the stabiliser of $v_i$ in $\tG'$.  Since $\tG$ and $\tG'$ act transitively on the vertices of $\Delta$, we have that each $N_i \cong H$ and each $S_i \cong S$.  We now define 
\[ \G'_0 := \la S_0, S_1, S_2 \ra \]
to be the subgroup of $\tG'$ generated by $S_0$, $S_1$ and $S_2$, and 
\[ \G_0 := \la N_0, N_1, N_2 \ra \]
to be the subgroup of $\tG$ generated by $N_0$, $N_1$ and $N_2$.  Clearly $\G'_0 \leq \G_0$.  

We claim that $\G'_0$ and $\G_0$ are cocompact lattices in $G=\PGL_3(\Fqt)$.  Recall from the introduction that $\G < G$ is a cocompact lattice in $G$ if it is a discrete subgroup of $G$ which acts cocompactly on $\Delta$.  Hence it suffices to show that $\G_0$ is a discrete subgroup of $\PGL_3(\Fqt)$ and that $\G'_0$ acts cocompactly on $\Delta$.  The following lemma is immediate, since by construction $\G_0$ is a subgroup of the discrete group $\tG \leq \PGL_3(\Fqt)$.

\begin{lemma} $\G_0$ is a discrete subgroup of $\PGL_3(\Fqt)$.
\end{lemma}

To show that $\G'_0$ acts cocompactly on $\Delta$, we first consider the action of the groups $S_i$ which generate $\G'_0$.

\begin{lemma}\label{l:neighbours}  For $i = 0,1,2$ and $j = i-1, i+1 \pmod 3$, the group $S_i$ acts simply transitively on the vertices neighbouring $v_i$ of type $j$.
\end{lemma}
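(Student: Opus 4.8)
The plan is to reduce the statement to the fact, recalled in Section~\ref{s:singer}, that a Singer cycle of $\PGL_3(q)$ acts simply transitively on both the points and the lines of $\PG(2,q)$. The only work is to bridge the building-theoretic stabiliser $S_i$ and this finite-group statement, using the identification of the link of $v_i$ with $\PG(2,q)$ together with the reduction map $G_{v_i}\to\PGL_3(q)$.

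First I would fix $i\in\{0,1,2\}$ and recall that $S_i=\tG'_{v_i}$ is isomorphic to the Singer cycle $S$, hence cyclic of order $\frac{q^3-1}{q-1}=q^2+q+1$, and that $S_i\leq G_{v_i}\cong\PGL_3(\Fq[[t]])$. The vertices neighbouring $v_i$ are exactly those of types $j=i+1$ and $j=i-1\pmod 3$, since a type-$i$ vertex of an $\tilde{A}_2$--building is never adjacent to another type-$i$ vertex. Under the identification of the link of $v_i$ with $\PG(2,q)$ from Section~\ref{s:buildings}, these two types of neighbour correspond, in one order or the other, to the points and the lines of $\PG(2,q)$; I need not determine which is which.

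Next I would show that $S_i$ acts on the link as a Singer cycle of $\PGL_3(q)$. Writing $G_{v_i}\cong U_{v_i}\rtimes\PGL_3(q)$ with $U_{v_i}$ pro-$p$, as in the proof of Lemma~\ref{l:H}, the action of $G_{v_i}$ on the link factors through the reduction $G_{v_i}\to\PGL_3(q)$, whose kernel $U_{v_i}$ fixes the link pointwise. Since $|S_i|=q^2+q+1$ is coprime to $p$, we have $S_i\cap U_{v_i}=1$, so $S_i$ injects into $\PGL_3(q)$; its image is an abelian subgroup of order $q^2+q+1$, hence a Singer cycle of $\PGL_3(q)$ by Proposition~2.2 of \cite{CdR}, exactly as in Lemma~\ref{l:H}(2). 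Thus $S_i$ acts on the link of $v_i$ precisely as a Singer cycle acts on $\PG(2,q)$.

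Combining these steps with Section~\ref{s:singer}, the $S_i$--action is simply transitive on the points and simply transitive on the lines of the link, so $S_i$ acts simply transitively on the neighbours of $v_i$ of each type $j=i\pm 1\pmod 3$, as required. The only genuine point to get right is the passage in the third paragraph from the abstract stabiliser $S_i$ to a faithful Singer cycle in $\PGL_3(q)$; once the pro-$p$ kernel $U_{v_i}$ is removed this is immediate, and in particular no case distinction on whether $p=3$ is needed, precisely because $S_i$ already has order prime to $p$.
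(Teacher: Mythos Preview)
Your argument is correct, but it follows a different path from the paper's. The paper proves the case $i=0$ directly from the construction: $S_0$ is by definition the Singer cycle $S\leq\PGL_3(q)$ from Lemma~\ref{l:H}, and Section~\ref{s:buildings} already identifies the type~$1$ and type~$2$ neighbours of $v_0$ with the $1$-- and $2$--dimensional subspaces of $\F_q^3$, so Section~\ref{s:singer} applies immediately. For $i=1,2$ the paper then observes that $\tG'\leq\tG$ consists of type-rotating automorphisms, and that $S_i$ is obtained from $S_0$ by conjugating by an element of $\tG'$ that shifts types by $i$; this conjugation carries the simply transitive action at $v_0$ to one at $v_i$.

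By contrast, you treat all three vertices uniformly, passing through the reduction $G_{v_i}\to\PGL_3(q)$ and invoking Proposition~2.2 of \cite{CdR} to recognise the image of $S_i$ as a Singer cycle. This is perfectly valid; it avoids any appeal to the type-rotating structure of $\tG$, at the cost of using the link identification and Levi decomposition at each $v_i$ rather than only at $v_0$ (note that Section~\ref{s:buildings} states the link description only for $v_0$, so you are implicitly transporting it by the $G$--action). The paper's conjugation argument is marginally more economical and also makes transparent why the same statement holds for $i=1,2$ without reproving the Singer-cycle property, which is convenient when the analogous lemma is stated for general $d$ in Section~\ref{s:d>3}.
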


\begin{proof}
From the discussion of Singer cycles in Section \ref{s:singer} and types in Section \ref{s:buildings}, the group $S_0$ acts simply  transitively on the vertices neighbouring $v_0$ of type $j$, for $j = -1,1 \pmod 3$.  Now $\tG'$ consists of type-rotating automorphisms, since the Cartwright--Steger lattice $\tG$, which contains $\tG'$, consists of type-rotating automorphisms.  By construction and the definition of type-rotating, for $i = 1,2$ the group $S_i$ is the image of $S_0$ under conjugation by an element of $\tG'$ which adds $i \pmod 3$ to each type.  Thus for $i = 1,2$, the group $S_i$ acts simply transitively on the vertices neighbouring $v_i$ of type $j = i-1, i+1 \pmod 3$. \end{proof}

\begin{prop}\label{p:vertex transitive} For $i = 0,1,2$, the group $\G'_0$ acts transitively on the vertices of type $i$ in $\Delta$.
\end{prop}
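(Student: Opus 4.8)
The plan is to prove the following apparently stronger statement, which immediately yields the proposition: for every vertex $w$ of $\Delta$ there is some $\gamma \in \G'_0$ with $\gamma w = v_{t(w)}$, where $t(w) \in \{0,1,2\}$ denotes the type of $w$. Granting this, any two vertices of the same type can each be carried to the standard vertex of that type and hence into one another, so $\G'_0$ is transitive on each type. I would establish the statement by induction on
$$
D(w) := \min_{k} \dist(v_k, w),
$$
the distance in the $1$--skeleton of $\Delta$ from $w$ to the standard chamber $\{v_0,v_1,v_2\}$; since the $1$--skeleton is connected, $D(w)$ is a well-defined non-negative integer, and every element of $\tG'$ acts on $\Delta$ as a type-rotating isometry.

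The base case $D(w) = 0$ is trivial, as then $w$ is one of $v_0,v_1,v_2$ and we take $\gamma = \id$. For the inductive step, suppose $D(w) = n \ge 1$ and choose $i$ with $\dist(v_i,w) = n$. I would fix a geodesic edge-path $v_i = u_0, u_1, \ldots, u_n = w$ and set $j := t(u_1)$; since $u_1$ is adjacent to $v_i$ we have $j \neq i$, so $j \in \{i-1,i+1\} \pmod 3$. The key geometric step is to rotate this geodesic so that its first edge lands on the standard chamber. Because $v_i$ and $v_j$ lie in the standard chamber they are adjacent, so $v_j$ is a type-$j$ neighbour of $v_i$; by Lemma \ref{l:neighbours} the group $S_i$ acts simply transitively on such neighbours, so there is $s \in S_i \leq \G'_0$ with $s u_1 = v_j$.

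Now $s$ fixes $v_i$ and is an isometry, so $v_i, v_j, s u_2, \ldots, s u_n = sw$ is again a geodesic from $v_i$ to $sw$ of length $n$; as $v_j$ is its second vertex, $\dist(v_j, sw) = n-1$ and hence $D(sw) \leq n-1$. Moreover $s$, fixing the type-$i$ vertex $v_i$ and being type-rotating, is in fact type-preserving, so $t(sw) = t(w)$. By the inductive hypothesis applied to $sw$ there is $\gamma' \in \G'_0$ with $\gamma'(sw) = v_{t(w)}$, whence $\gamma := \gamma' s \in \G'_0$ satisfies $\gamma w = v_{t(w)}$, completing the induction.

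The only points that require care are precisely the ingredients making the inductive step work: that the three vertices of the standard chamber are pairwise adjacent (so that $v_j$ is available as a target for $S_i$), that Lemma \ref{l:neighbours} supplies transitivity on neighbours of \emph{both} types $\neq i$, and that applying a single element of $S_i$ already strictly decreases the distance to the standard chamber. I do not expect a genuine obstacle here: once the distance-reduction is set up the conclusion follows formally, and the supporting facts (connectivity of the $1$--skeleton, and the isometric, type-rotating action of $\tG'$ recorded in Section \ref{s:buildings}) are standard.
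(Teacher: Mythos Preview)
Your argument is correct and follows essentially the same strategy as the paper: an induction on graph distance, with the inductive step powered by Lemma~\ref{l:neighbours}. The organisation differs slightly. The paper fixes a single target type, inducts on $\dist(w_0,v_0)$ in steps of~$2$, and in its inductive step invokes an intermediate type-$0$ vertex $w_0'$ with $\dist(w_0',v_0)=2$; you instead induct on the distance $D(w)=\min_k \dist(v_k,w)$ to the whole standard chamber in steps of~$1$, handling all three types simultaneously. Your packaging makes the base case trivial and the distance reduction transparent (one application of $S_i$ moves the first step of a geodesic onto the edge $v_iv_j$ of the standard chamber), whereas the paper's version keeps the bookkeeping within a single type. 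Either way the content is the same use of Lemma~\ref{l:neighbours}.
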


\begin{proof}  We will show that $\G'_0$ acts transitively on the vertices of type $0$ in $\Delta$.  The same argument will apply for types $1$ and $2$.

It suffices to show that for each vertex $w_0$ of type $0$, there is an element of $\G'_0$ which takes $w_0$ to $v_0$.  We prove this by induction on the distance from $w_0$ to $v_0$ in the natural graph metric $\delta$ on the edges of $\Delta$.  Note that $\delta(w_0,v_0)$ will always be an even integer since no two vertices of type $0$ are adjacent.

If $\delta(w_0,v_0) = 2$ we consider two cases.  The first is when $w_0$ is adjacent to either $v_1$ or $v_2$.   By Lemma \ref{l:neighbours} above, $S_1$ and $S_2$ act transitively on the type $0$ neighbours of $v_1$ and $v_2$ respectively, and so the claim follows in this case.  Otherwise, $w_0$ is adjacent to some vertex $s_0 v_1$ or $s_0' v_2$ where $s_0, s_0' \in S_0$, since $S_0$ acts transitively on the vertices of types $1$ and $2$ which neighbour $v_0$.  Then $s_0^{-1} w_0$ is adjacent to $v_1$ or $(s_0')^{-1}w_0$ is adjacent to $v_2$, and we apply the argument from the first case.

Now suppose that $\delta(w_0,v_0) = 2k$.  Then there is a vertex $w_0'$ of $\Delta$ of type $0$ such that $\delta(w_0,w_0') = 2(k-1)$ and $\delta(w_0',v_0) = 2$.  By the base case of the induction there is an element $\gamma \in \hat\G_0$ such that $\gamma w_0' = v_0$.  But then $\delta(\gamma w_0, v_0) = \delta(\gamma w_0, \gamma w_0') = \delta( w_0,w_0') = 2(k-1)$ so by inductive assumption there is a $\gamma' \in \hat\G_0$ such that $\gamma' \gamma w_0 = v_0$, as required.
\end{proof}

\begin{corollary}\label{c:cocompact} $\G'_0$ acts cocompactly on $\Delta$.
\end{corollary}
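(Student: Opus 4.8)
The plan is to deduce cocompactness directly from the vertex-transitivity of Proposition \ref{p:vertex transitive}, by reducing to a count of chamber orbits. I would first recall that, since $d = 3$, the building $\Delta$ is a two-dimensional locally finite simplicial complex: its chambers are triangles, each carrying exactly one vertex of each of the types $0, 1, 2$, and the link of every vertex is the finite flag complex of $\PG(2,q)$. A standard fact about such complexes is that a simplicial action is cocompact if and only if it has only finitely many orbits of chambers, so it suffices to bound the number of chamber orbits of $\G'_0$.

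Next I would observe that every chamber of $\Delta$ contains a unique vertex of type $0$. By Proposition \ref{p:vertex transitive}, $\G'_0$ acts transitively on the vertices of type $0$, so every chamber can be moved by some element of $\G'_0$ to a chamber containing the base vertex $v_0$; hence every orbit of chambers has a representative through $v_0$.

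Finally, the chambers of $\Delta$ containing $v_0$ correspond bijectively to the chambers of the link of $v_0$, that is, to the flags (incident point-line pairs) of $\PG(2,q)$, of which there are exactly $(q^2+q+1)(q+1)$. Thus $\G'_0$ has only finitely many orbits of chambers, and so acts cocompactly on $\Delta$.

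Since the reduction to chamber orbits is routine and Proposition \ref{p:vertex transitive} has already been established, I do not expect any genuine obstacle here. The only ingredient beyond vertex-transitivity is the local finiteness of $\Delta$ --- equivalently, the finiteness of the links $\PG(2,q)$ --- which is what both converts ``finitely many chamber orbits'' into a compact quotient and makes the set of chambers through $v_0$ finite.
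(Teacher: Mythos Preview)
Your argument is correct and follows essentially the same route as the paper: both deduce cocompactness from Proposition~\ref{p:vertex transitive} together with the local finiteness of $\Delta$. The only cosmetic difference is that the paper counts vertex orbits directly (at most three, one for each type), whereas you pass through chamber orbits via the finite link at $v_0$; the underlying idea is identical.
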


\begin{proof} By Proposition \ref{p:vertex transitive} above, $\G'_0$ has finitely many (at most $3$) orbits of vertices on $\Delta$.  Since $\Delta$ is locally finite, this implies that $\G'_0$ acts cocompactly.
\end{proof}

We have established the claim that $\G'_0$ and $\G_0$ are cocompact lattices in $\PGL_3(\Fqt)$.  To finish the proof of Theorem \ref{t:PGL} in the case $d = 3$, we further describe the actions of $\G'_0$ and $\G_0$ on $\Delta$.

\begin{corollary}\label{c:action}  The action of $\G'_0$ and of $\G_0$ is type-preserving and transitive on each type of vertex in $\Delta$.  For $i = 0,1,2$, the stabiliser of $v_i$ in $\G_0'$ is the group $S_i$, and the stabiliser of $v_i$ in $\G_0$ is the group $N_i$.   
\end{corollary}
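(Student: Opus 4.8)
The plan is to deduce all three assertions from the prior results together with the defining containments $\G'_0 \leq \tG'$ and $\G_0 \leq \tG$, treating the two lattices in parallel.

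First, for the type-preserving claim, I would show that each generating subgroup is type-preserving and conclude that so is the group it generates. Each $S_i$ lies in $\tG'$, which is contained in the Cartwright--Steger lattice $\tG$, all of whose elements are type-rotating; hence every element of $S_i$ is type-rotating. But $S_i$ fixes the vertex $v_i$, which has type $i$, so each element of $S_i$ fixes at least one type. By the dichotomy recalled in Section \ref{s:buildings} (a type-rotating automorphism fixes either no type or all types), every element of $S_i$ therefore fixes all types, i.e.\ is type-preserving. Since the type-preserving automorphisms form a subgroup of $\Aut(\Delta)$ and $\G'_0 = \la S_0, S_1, S_2\ra$, the group $\G'_0$ is type-preserving; the identical argument with $N_i$ in place of $S_i$ shows $\G_0$ is type-preserving.

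Second, transitivity on each type is essentially immediate: Proposition \ref{p:vertex transitive} gives it for $\G'_0$, and since $\G'_0 \leq \G_0$, the larger group $\G_0$ is transitive on each type as well. For the stabiliser computation, the key observation is that $S_i$ is by definition the \emph{full} stabiliser of $v_i$ in $\tG'$, while $\G'_0 \leq \tG'$. Consequently
\[
\Stab_{\G'_0}(v_i) = \G'_0 \cap \Stab_{\tG'}(v_i) = \G'_0 \cap S_i = S_i,
\]
where the last equality holds because $S_i$ is one of the generators of $\G'_0$, so $S_i \leq \G'_0$. The same reasoning, using that $N_i$ is the full stabiliser of $v_i$ in $\tG$ and that $N_i \leq \G_0 \leq \tG$, yields $\Stab_{\G_0}(v_i) = N_i$.

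The only step requiring genuine care is the type-preserving deduction, where one must combine the type-rotating property inherited from $\tG$ with the ``all-or-nothing'' dichotomy for fixed types; once this is in place, the transitivity and stabiliser statements are formal consequences of the containments $\G'_0 \leq \tG'$ and $\G_0 \leq \tG$ and the fact that $S_i, N_i$ are by construction the full vertex stabilisers in the respective ambient lattices.
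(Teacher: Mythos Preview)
Your proposal is correct and follows essentially the same approach as the paper: the paper likewise argues that each $N_i$ (being inside the type-rotating group $\tG$ and fixing a vertex) must fix all types, deduces type-preservation for $\G_0$ and hence $\G'_0$, invokes Proposition~\ref{p:vertex transitive} for transitivity, and computes stabilisers via the sandwich $S_i \leq \Stab_{\G'_0}(v_i) \leq \Stab_{\tG'}(v_i) = S_i$. The only cosmetic difference is that the paper establishes type-preservation for $\G_0$ first and passes down to $\G'_0$, whereas you run the argument in parallel for both groups.
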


\begin{proof}   Each $N_i$ is a subgroup of the type-rotating group $\tG$ and stabilises a vertex of type $i$, hence each $N_i$ fixes all types.  It follows that $\G_0$ and thus $\G'_0$ is type-preserving.  By Proposition \ref{p:vertex transitive}, the action of $\G'_0$ and thus of $\G_0$ is transitive on each type of vertex of $\Delta$.  For $i = 0,1,2$,  the stabiliser of $v_i$ in $\G'_0$ is $S_i$ since by construction $$S_i \leq \Stab_{\G'_0}(v_i) \leq \Stab_{\tG'}(v_i) = S_i.$$  Similarly, the stabiliser of $v_i$ in $\G_0$ is $N_i$.
\end{proof}

\subsection{Lattices in $\PSL_3(\Fqt)$}\label{s:PSL_3}

We will first prove that $\Lambda_0:=\G_0 \cap \PSL_3(\Fqt)$ is a cocompact lattice in $\PSL_3(\Fqt)$.  The proof that $\Lambda'_0 :=\G'_0 \cap \PSL_3(\Fqt)$ is a cocompact lattice in $\PSL_3(\Fqt)$ is similar.  

Since $\G_0$ is discrete, it is immediate that $\Lambda_0$ is a discrete subgroup of $\PSL_3(\Fqt)$.  Now $\G_0$ acts cocompactly on $\Delta$, so to show that $\Lambda_0$ act cocompactly on $\Delta$ it suffices to show that $\Lambda_0$ is of finite index in $\G_0$. 

Consider the determinant homomorphism $\det:\GL_3(\Fqt) \to \Fqt^\times$, with kernel $\SL_3(\Fqt)$.  This homomorphism induces a well-defined homomorphism
\[\overline\det: \PGL_3(\Fqt) \to \Fqt^\times/(\Fqt^\times)^3\]
where $(\Fqt^\times)^3$ is the subgroup of $\Fqt^\times$ consisting of cubes of invertible elements of $\Fqt$.  The kernel of $\overline\det$ is $\PSL_3(\Fqt)$.  

The group $\G_0$ is finitely generated by torsion elements, since each $N_i$ is finite.  Hence the restriction of $\overline\det$ to $\G_0$ has finite image.  But the kernel of this restriction is $\G_0 \cap \PSL_3(\Fqt) = \Lambda_0$.  Thus $\Lambda_0$ has finite index in $\G_0$, as required.  We conclude that $\Lambda_0$ is a cocompact lattice in $\PSL_3(\Fqt)$.

Our further discussion is divided into cases depending upon the value of $q$.  We will establish the remaining claims of Theorem \ref{t:PSL} and specify the relationship between our lattices and the Cartwright--Steger lattice $\tG$ in Sections \ref{s:3 divides q+1} and \ref{s:3 divides q}, then in Section \ref{s:3 divides q-1} explain why, if $(3,q-1) \neq 1$, we are not able to describe any more precisely the actions of $\Lambda_0$ and $\Lambda'_0$.

\subsubsection{Case $3 \mid (q+1)$}\label{s:3 divides q+1}

Note that in this case $(d,q-1) = 1$ and $p \neq 3$, so in particular $p$ does not divide $d = 3$.  

By Lemma~\ref{l:H}(4) above, in this case we have \[ |H \cap \PSL_3(\Fqt)| = 3(q^2 + q + 1) = |H|\] and so the entire group $H$ is contained in $\PSL_3(\Fqt)$.  The groups $N_i$ which generate $\G_0$ are by construction conjugates of $H$ in $\tG \leq \PGL_3(\Fqt)$, hence for $i = 0,1,2$ the group $N_i$ is also contained in $\PSL_3(\Fqt)$.  Therefore $\G_0 = \la N_0, N_1, N_2 \ra$ is contained in $\PSL_3(\Fqt)$, that is, $\Lambda_0 = \G_0$.  Since $\G_0' \leq \G_0$, we also have that $\G'_0$ is contained in $\PSL_3(\Fqt)$, that is, $\Lambda'_0 = \G'_0$.  By the same arguments as in Section \ref{s:PGL_3} above, it follows that $\G'_0$ and $\G_0$ are cocompact lattices in $\PSL_3(\Fqt)$ with action as described in Corollary \ref{c:action} above.  

We can now specify the relationship between our lattice $\G_0$ and the Cartwright--Steger lattice $\tG$, in this case.

\begin{lemma}\label{l:3 divides q+1} If $3$ divides $(q+1)$, then $\G_0 = \tG \cap \PSL_3(\Fqt)$.
\end{lemma}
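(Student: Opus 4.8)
The plan is to establish the two inclusions separately, and to observe that only one of them carries any content. The inclusion $\G_0 \subseteq \tG \cap \PSL_3(\Fqt)$ requires no new work: by construction $\G_0 \leq \tG$, and in the case $3 \mid (q+1)$ we have already shown above that $\G_0 = \Lambda_0 = \G_0 \cap \PSL_3(\Fqt)$, so $\G_0 \leq \PSL_3(\Fqt)$ as well. Combining these gives $\G_0 \subseteq \tG \cap \PSL_3(\Fqt)$. The substance of the lemma is therefore the reverse inclusion $\tG \cap \PSL_3(\Fqt) \subseteq \G_0$.

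For the reverse inclusion I would argue geometrically, using the transitivity of $\G_0$ on the vertices of each type. Let $\gamma \in \tG \cap \PSL_3(\Fqt)$. Since $\gamma$ lies in $\PSL_3(\Fqt)$, its action on $\Delta$ is type-preserving (as recalled in the introduction), so $\gamma v_0$ is again a vertex of type $0$. By Proposition \ref{p:vertex transitive}, the group $\G_0$ acts transitively on the vertices of type $0$, so there is an element $g \in \G_0$ with $g(\gamma v_0) = v_0$. Then $g\gamma$ lies in $\tG$ and fixes $v_0$, hence $g\gamma \in \Stab_{\tG}(v_0) = N_0$. Since $N_0$ is one of the generators of $\G_0$, we obtain $g\gamma \in \G_0$, and therefore $\gamma = g^{-1}(g\gamma) \in \G_0$, as required.

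I do not anticipate a genuine obstacle here; the crux is simply the reverse inclusion, and the only points needing care are already in hand. These are that every element of $\PSL_3(\Fqt)$ acts type-preservingly, so that $\gamma v_0$ stays of type $0$, and that the stabiliser of $v_0$ in $\tG$ is exactly $N_0 \leq \G_0$, which holds by the definition of $N_0$. The essential geometric input is the vertex-transitivity of $\G_0$ supplied by Proposition \ref{p:vertex transitive}; everything else is a routine transitivity-plus-stabiliser argument. One could instead identify $\tG \cap \PSL_3(\Fqt)$ with the type-preserving subgroup of $\tG$ via the homomorphism $\overline{\det}$, using that $3 \mid (q+1)$ forces $\Fqt^\times/(\Fqt^\times)^3$ to be cyclic of order $3$ generated by the image of $t$, but the geometric argument above avoids this computation entirely.
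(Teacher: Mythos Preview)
Your proposal is correct and follows essentially the same argument as the paper: both establish $\G_0 \leq \tG \cap \PSL_3(\Fqt)$ from the construction and the previously shown containment $\G_0 \leq \PSL_3(\Fqt)$, and both prove the reverse inclusion by taking an arbitrary element, using that $\PSL_3(\Fqt)$ is type-preserving so the image of $v_0$ has type $0$, applying transitivity of $\G_0$ on type-$0$ vertices to bring it back to $v_0$, and then identifying the resulting stabilising element as lying in $\Stab_{\tG}(v_0) = N_0 \leq \G_0$. The only cosmetic difference is notation (your $g$ plays the role of the paper's $g_0^{-1}$).
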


\begin{proof}  The containment $\G_0 \leq \tG \cap \PSL_3(\Fqt)$ holds since we constructed $\G_0$ as a subgroup of $\tG$ and showed above that $\G_0 \leq \PSL_3(\Fqt)$.  

Let $g \in \tG \cap \PSL_3(\Fqt)$.  Then since the action of $\PSL_3(\Fqt)$ on $\Delta$ is type-preserving, the vertex $gv_0$ has type $0$.  Now as $\G_0$ acts transitively on vertices of type $0$, there is a $g_0 \in \G_0$ such that $g_0^{-1}gv_0 = v_0$.  Thus as $\G_0 \leq \tG$, the element $h := g_0^{-1}g$ is in $\Stab_{\tG}(v_0)$.  But $\Stab_{\G_0}(v_0) = \Stab_{\tG}(v_0) = N_0$, and thus $g = g_0 h \in \G_0$, as required.
\end{proof}

\subsubsection{Case $3 \mid q$}\label{s:3 divides q}

Note that in this case $(d,q-1) = 1$ and $p = 3$, so in particular $p$ divides $d=3$.

In this case, as $(3,q-1) = 1$, we have by Proposition \ref{p:Dm3} and Lemma~\ref{l:H}(4)  above that $H \cap \PSL_3(\Fqt)$ is equal to the Singer cycle $S < H$.  By similar arguments to those in Section \ref{s:3 divides q+1} above, it follows that $\Lambda'_0 = \G'_0$ is a cocompact lattice in $\PSL_3(\Fqt)$ with action as described in Corollary \ref{c:action} above.  The proof of the following lemma is similar to that of Lemma \ref{l:3 divides q+1} above.

\begin{lemma}\label{l:3 divides q} If $3$ divides $q$, then $\G'_0 = \tG \cap \PSL_3(\Fqt)$.
\end{lemma}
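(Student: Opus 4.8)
The plan is to mirror the proof of Lemma \ref{l:3 divides q+1}, but with $\G'_0$ in place of $\G_0$, exploiting the fact that in the present case it is the \emph{smaller} vertex stabiliser $S$ (rather than all of $H$) that survives intersection with $\PSL_3(\Fqt)$. Accordingly I expect the two containments $\G'_0 \subseteq \tG \cap \PSL_3(\Fqt)$ and $\tG \cap \PSL_3(\Fqt) \subseteq \G'_0$ to be proved by essentially the same orbit argument as before.

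First I would record the easy containment $\G'_0 \leq \tG \cap \PSL_3(\Fqt)$. By construction $\G'_0 \leq \tG' \leq \tG$, and at the start of this subsection we used Proposition \ref{p:Dm3} and Lemma \ref{l:H}(4) to show $\Lambda'_0 = \G'_0$, so in particular $\G'_0 \leq \PSL_3(\Fqt)$. This gives one inclusion immediately.

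For the reverse inclusion I would take an arbitrary $g \in \tG \cap \PSL_3(\Fqt)$. Since the action of $\PSL_3(\Fqt)$ on $\Delta$ is type-preserving, the vertex $gv_0$ has type $0$. By Proposition \ref{p:vertex transitive}, $\G'_0$ acts transitively on the vertices of type $0$, so there is $g_0 \in \G'_0$ with $g_0^{-1}gv_0 = v_0$. Setting $h := g_0^{-1}g$, the element $h$ fixes $v_0$ and lies in $\tG$, hence $h \in \Stab_{\tG}(v_0) = N_0$. Moreover $g \in \PSL_3(\Fqt)$ and $g_0 \in \G'_0 \leq \PSL_3(\Fqt)$, so $h \in N_0 \cap \PSL_3(\Fqt)$. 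Since $N_0 \cong H$ and, by the opening computation of this subsection, $H \cap \PSL_3(\Fqt) = S$, we get $N_0 \cap \PSL_3(\Fqt) = S_0$; and by Corollary \ref{c:action}, $S_0 = \Stab_{\G'_0}(v_0) \leq \G'_0$. Therefore $h \in \G'_0$, whence $g = g_0 h \in \G'_0$, as required.

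The main obstacle — indeed essentially the only non-formal point, and the place where the hypothesis $3 \mid q$ is used — is the identification $N_0 \cap \PSL_3(\Fqt) = S_0$. This rests on the fact, specific to characteristic $3$ (where $(3,q-1)=1$ and $p=3$), that $H \cap \PSL_3(\Fqt)$ collapses onto the Singer cycle $S$, as established via Proposition \ref{p:Dm3} and Lemma \ref{l:H}(4). The matching of orders $|N_0 \cap \PSL_3(\Fqt)| = |H \cap \PSL_3(\Fqt)| = |S| = |S_0|$, together with the obvious containment $S_0 \leq N_0 \cap \PSL_3(\Fqt)$, then forces equality. Everything else is the same transitivity-plus-stabiliser bookkeeping as in Lemma \ref{l:3 divides q+1}.
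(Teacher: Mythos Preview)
Your proposal is correct and follows exactly the approach the paper intends: the paper states only that the proof ``is similar to that of Lemma~\ref{l:3 divides q+1}'', and you have carried out precisely that adaptation, correctly identifying that the one substantive change is replacing the equality $\Stab_{\G_0}(v_0) = \Stab_{\tG}(v_0)$ by the argument that $h \in N_0 \cap \PSL_3(\Fqt) = S_0 \leq \G'_0$, which holds in characteristic~$3$ because $H \cap \PSL_3(\Fqt) = S$.
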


\subsubsection{Case $3 \mid (q - 1)$}\label{s:3 divides q-1}

In this case, by Lemma~\ref{l:H}(4) above, $H \cap \PSL_3(\Fqt)$ has order $(q^2 + q + 1)$.  Moreover, 
as $H\cap \PSL_3(\Fqt)=H\cap\PSL_3(\Fq[[t]])$,  $H$ is a normaliser of a Singer cycle of $\PSL_3(q)$.
  Thus as discussed in Section \ref{s:singer}, $H \cap \PSL_3(\Fqt)$ cannot act transitively on the set of points and the set of lines of the projective plane over $\F_q$.   Hence the arguments used to prove Proposition \ref{p:vertex transitive} above cannot be applied.  We do not know in this case whether $\Lambda_0$ or $\Lambda'_0$ acts transitively on the set of vertices of $\Delta$ of each type.  (Since $\Lambda_0$ and $\Lambda_0'$ are type-preserving cocompact lattices, we do know that they have finitely many orbits of vertices of each type.)

\section{Lattices in cases $d > 3$}\label{s:d>3}

As in the case $d = 3$, we first construct and establish the properties of lattices $\G'_0$ and $\G_0$ in $\PGL_d(\Fqt)$, then consider their intersections with $\PSL_d(\Fqt)$.  Many arguments from the case $d = 3$ apply immediately for $d > 3$.

\subsection{Lattices in $\PGL_d(\Fqt)$}\label{s:PGL_d}

For $d > 3$, the construction of the cocompact lattice $\tG$ in $\PGL_d(\Fqt)$ described in Section~\ref{s:CS} above appears in \cite{CS}.  
As recalled in Section \ref{s:applications}(5) above, the lattice $\tG$ is a product of a vertex stabiliser $H$ of order $d \frac{q^d - 1}{q - 1}$ and a vertex-regular lattice~$\G$.  Denote by $\tG'$ the subgroup of $\tG$ which is the product of $\G$ with the Singer cycle $S < H$ guaranteed by Lemma \ref{l:H} above.  Then by construction, $S$ is a vertex stabiliser in $\tG'$.

For $i = 0,\ldots,d-1$ let $v_i$ be the vertex of type $i$ in the standard chamber, as in Section \ref{s:buildings} above.  Let $N_i$ be the stabiliser of $v_i$ in $\tG$ and $S_i$  be the stabiliser of $v_i$ in $\tG'$.  Then each $N_i \cong H$ and each $S_i \cong S$.  We define 
\[ \G'_0 := \la S_0, \ldots, S_{d-1} \ra \leq \tG' \] and 
\[ \G_0 := \la N_0, \ldots, N_{d-1} \ra \leq \tG. \]  Clearly $\G'_0 \leq \G_0$.  

We claim that $\G'_0$ and $\G_0$ are cocompact lattices in $\PGL_d(\Fqt)$.  As in the case $d = 3$, it suffices to show that $\G_0$ is a discrete subgroup of $\PGL_d(\Fqt)$ and that $\G'_0$ acts cocompactly on $\Delta$, and the following lemma is immediate.

\begin{lemma} $\G_0$ is a discrete subgroup of $\PGL_d(\Fqt)$.
\end{lemma}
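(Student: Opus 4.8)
The plan is to deduce discreteness of $\G_0$ purely from the fact that it is a subgroup of the Cartwright--Steger lattice $\tG$. First I would note that, by definition, each generator $N_i$ of $\G_0$ is the stabiliser in $\tG$ of the vertex $v_i$, so $N_i \leq \tG$ for every $i$; consequently $\G_0 = \la N_0, \ldots, N_{d-1} \ra$ is a subgroup of $\tG$. Next I would invoke the fact recalled in Section~\ref{s:applications}(1) that $\tG$ is a cocompact lattice in $G = \PGL_d(\Fqt)$; in particular $\tG$ is, by the definition of a lattice, a discrete subgroup of the totally disconnected locally compact group $G$.

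Finally, since the subspace topology on any subgroup of a discrete subgroup of a topological group is again discrete, the inclusion $\G_0 \leq \tG$ forces $\G_0$ to be discrete in $G$. I expect no genuine obstacle here: the entire content of the statement reduces to the observation that the generators $N_i$ lie in $\tG$, which is immediate from their construction as $\tG$--stabilisers, together with the already-established discreteness of $\tG$. This is exactly the argument used for the corresponding lemma in the case $d = 3$ in Section~\ref{s:PGL_3}, and no feature of the proof depends on the value of $d$.
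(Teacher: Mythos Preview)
Your proposal is correct and matches the paper's own reasoning exactly: the paper states that the lemma is immediate since $\G_0$ is by construction a subgroup of the discrete group $\tG \leq \PGL_d(\Fqt)$, and in the $d>3$ case simply refers back to this argument from the $d=3$ case. There is nothing to add.
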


The proof of the next lemma is the same as that of Lemma \ref{l:neighbours} above, after replacing $3$ by $d$.  

\begin{lemma}\label{l:neighbours d>3}  For $i = 0,\ldots,d-1$ and $j = i-1, i+1 \pmod d$, the group $S_i$ acts simply transitively on the vertices neighbouring $v_i$ of type $j$.
\end{lemma}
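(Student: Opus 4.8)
The plan is to follow the proof of Lemma~\ref{l:neighbours} verbatim, with $3$ replaced by $d$, reducing the general index $i$ to the base case $i = 0$ by conjugation. First I would treat $i = 0$, where $S_0 = S$ is a Singer cycle of $\PGL_d(q)$ sitting inside $H = N_0$ by Lemma~\ref{l:H}. From the discussion in Section~\ref{s:singer}, such a Singer cycle acts simply transitively on the set of $1$--dimensional subspaces of $V = \Fq^d$ and, simultaneously, on the set of $(d-1)$--dimensional subspaces. By the choice of types in Section~\ref{s:buildings}, the neighbours of $v_0$ of type $1$ correspond to the $1$--dimensional subspaces and those of type $d-1$ to the $(d-1)$--dimensional subspaces. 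Since $1 = 0+1$ and $d-1 \equiv 0-1 \pmod d$, this establishes the base case: $S_0$ acts simply transitively on the neighbours of $v_0$ of type $j$ for $j = \pm 1 \pmod d$.

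Next I would transport this to arbitrary $i$. Because $\tG' \leq \tG$ and $\tG$ consists of type-rotating automorphisms, every element of $\tG'$ is type-rotating. Using that $\tG'$ acts transitively on vertices (indeed $\G \leq \tG'$ already acts simply transitively), I would choose $\gamma_i \in \tG'$ with $\gamma_i v_0 = v_i$; being type-rotating and sending type $0$ to type $i$, such $\gamma_i$ adds $i \pmod d$ to every type. By construction $S_i = \gamma_i S_0 \gamma_i^{-1}$, so $S_i$ acts simply transitively on the $\gamma_i$--images of the type $\pm 1$ neighbours of $v_0$, which are precisely the neighbours of $v_i$ of type $i \pm 1 \pmod d$. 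This completes the argument.

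The only point requiring care---and the essential reason the statement is restricted to $j = i \pm 1$---is the base case identification. For $d > 3$ a Singer cycle of $\PGL_d(q)$ does not in general act transitively on subspaces of intermediate dimension $2, \ldots, d-2$, so I cannot assert simple transitivity on neighbours of the intermediate types. It is therefore crucial to invoke exactly the two extreme dimensions $1$ and $d-1$, on which simple transitivity does hold by Section~\ref{s:singer}, and to match them correctly to the types $i+1$ and $i-1$ via the type conventions fixed in Section~\ref{s:buildings}. Once this matching is in place, the type-rotating conjugation step is purely formal and identical to the $d = 3$ case.
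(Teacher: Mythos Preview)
Your proposal is correct and follows essentially the same approach as the paper, which explicitly states that the proof is identical to that of Lemma~\ref{l:neighbours} with $3$ replaced by $d$. Your additional remark explaining why the statement is restricted to $j = i \pm 1$ (namely that a Singer cycle need not act transitively on subspaces of intermediate dimension) is accurate and a helpful clarification, though the paper does not include it.
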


Compared with the proof of the corresponding result in the case $d = 3$, Proposition \ref{p:vertex transitive} above, the proof of Proposition \ref{p:vertex transitive d>3} below requires some extra care in the base case of the induction.

\begin{prop}\label{p:vertex transitive d>3} For $i = 0,\ldots,d-1$, the group $\G'_0$ acts transitively on the vertices of type $i$ in $\Delta$.
\end{prop}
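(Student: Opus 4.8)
The plan is to follow the structure of the proof of Proposition \ref{p:vertex transitive}, reducing everything to a base case which for $d>3$ is genuinely more involved. Since $\tG'$ is type-rotating, it suffices to treat type $0$: I would show that for every type-$0$ vertex $w_0$ there is an element of $\G'_0$ carrying $w_0$ to $v_0$, and the identical argument applied to the conjugates $S_i$ then yields transitivity on each type. As in Proposition \ref{p:vertex transitive} I would induct on the graph distance $\dist(w_0,v_0)$, which is always even since no two type-$0$ vertices are adjacent. The inductive step for $\dist(w_0,v_0)=2\ell$ with $\ell\ge 2$ carries over verbatim from the case $d=3$: choosing a type-$0$ vertex $w_0'$ on a geodesic with $\dist(w_0',v_0)=2$ and $\dist(w_0,w_0')=2(\ell-1)$, the base case produces $\gamma\in\G'_0$ with $\gamma w_0'=v_0$, and then $\dist(\gamma w_0,v_0)=2(\ell-1)$, so we finish by induction. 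Thus everything reduces to the base case $\dist(w_0,v_0)=2$.

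For the base case I would attach to the pair $(v_0,w_0)$ the invariant $j_{\min}=j_{\min}(w_0)$, the least type occurring among the common neighbours of $v_0$ and $w_0$. Working in the lattice model of Section \ref{s:buildings}, one checks that for two type-$0$ vertices at distance $2$ the common neighbours form a single residue whose set of types is the full interval $\{j_{\min},j_{\min}+1,\dots,d-j_{\min}\}$, with $1\le j_{\min}\le \lfloor d/2\rfloor$; concretely $j_{\min}$ equals the ``spread'' $k$ of the relative position $(1^k,0^{d-2k},(-1)^k)$ of the two vertices. The plan is to induct downward on $j_{\min}$.

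The case $j_{\min}=1$ is the ``extreme'' case and is handled exactly as in the case $d=3$. Here there is a common neighbour $u$ of type $1$ (or of type $d-1$), and by the discussion of Singer cycles in Section \ref{s:singer} the group $S_0$ acts transitively on the neighbours of $v_0$ of type $1$ (the points of $\PG(n,q)$) and of type $d-1$ (the hyperplanes). So some element of $S_0$ sends $u$ to $v_1$ (respectively $v_{d-1}$); then $w_0$ is carried to a type-$0$ neighbour of $v_1$ (respectively $v_{d-1}$), and since Lemma \ref{l:neighbours d>3} gives that $S_1$ (respectively $S_{d-1}$) acts transitively on the type-$0$ neighbours of that vertex, one of which is $v_0$, this completes the extreme case.

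The main obstacle is the descent step $j_{\min}\ge 2\Rightarrow j_{\min}-1$, and this is where the ``extra care'' lies. The difficulty is structural: $j_{\min}$ is invariant under the stabiliser $\Stab_{G}(v_0)\cong\PGL_d(\Fq[[t]])$, and in particular under $S_0$, so it can never be lowered by an element fixing $v_0$; the descent must therefore be effected by Singer elements $S_i$ with $i\neq 0$, which move $v_0$. Concretely I would first use the transitivity of $S_0$ on points to arrange that the point of $v_1$ is incident to a minimal common neighbour $u$, so that $v_0,v_1,u$ lie in a common chamber, and then apply a suitably chosen element of $S_1$ to $w_0$; the assertion to be proved, by an explicit lattice computation as above, is that the resulting type-$0$ vertex is again at distance $2$ from $v_0$ but now admits a common neighbour of type $j_{\min}-1$. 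Establishing this single inequality on $j_{\min}$ --- that one Singer rotation strictly reduces the spread of the relative position while preserving the distance --- is the crux of the proof. Once it is in hand, the downward induction on $j_{\min}$ terminates at the extreme case $j_{\min}=1$, and the proposition follows for all types by the type-rotating argument of the first paragraph.
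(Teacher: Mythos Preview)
Your inductive framework and the case $j_{\min}=1$ match the paper. The gap is the descent step for $j_{\min}\ge 2$: you defer it to an ``explicit lattice computation'' which you do not carry out, and the mechanism you sketch is problematic. Applying an element of $S_1$ moves $v_0$, so the invariant $j_{\min}(\,\cdot\,)$, which is defined relative to $v_0$, is not a priori controlled; what is preserved is $j_{\min}(s_1 w_0, s_1 v_0)$, not $j_{\min}(s_1 w_0, v_0)$. You have not specified which $s_1$ to take, nor given any reason why some choice strictly decreases $j_{\min}$ while keeping $\dist(s_1 w_0,v_0)=2$. You also rely on an unproved structural claim about the set of common neighbours being an interval of types $\{j_{\min},\dots,d-j_{\min}\}$; this may well be true, but it is extra input the paper does not need.

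The paper sidesteps all of this by \emph{abandoning} the requirement that the image of $w_0$ stay at distance $2$ from $v_0$. Starting from any common neighbour $v'_i$ of $v_0$ and $w_0$ of type $i$, it first uses a chain of moves $S_0, S_1,\ldots,S_{i-1}$ (each acting transitively on the type-$(j{+}1)$ neighbours of $v_j$, via Lemma~\ref{l:neighbours d>3}) to carry $v'_i$ to the standard vertex $v_i$; now $\gamma w_0$ is adjacent to $v_i$. Then, using only that two adjacent vertices lie in a common chamber, it picks a type-$(i{-}1)$ vertex adjacent to both $v_i$ and $\gamma w_0$ and uses $S_i$ (which fixes $v_i$) to send it to $v_{i-1}$; the image of $w_0$ is now adjacent to $v_{i-1}$. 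Iterating down to $v_1$ and then applying $S_1$ finishes. No lattice computation, no tracking of $j_{\min}$, and no constraint on $\dist(\,\cdot\,,v_0)$ during the walk. Your plan can likely be made to work, but as written the crux is missing, whereas the paper's argument is complete using only Lemma~\ref{l:neighbours d>3} and the chamber structure.
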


\begin{proof}  We will show that $\G'_0$ acts transitively on the vertices of type $0$ in $\Delta$.  The same argument will apply for types $i=1,\ldots,d-1$.  It suffices to show that for each vertex $w_0$ of type $0$, there is an element of $\G'_0$ which takes $w_0$ to $v_0$.  We prove this by induction on the distance $\delta(w_0,v_0) \in 2\mathbb{N}$.  

If $\delta(w_0,v_0) = 2$ we consider the following cases.
\begin{enumerate}
\item $w_0$ is adjacent to $v_1$.   By Lemma \ref{l:neighbours d>3} above, $S_1$ acts transitively on the type $0$ neighbours of $v_1$, and so the claim follows in this case.  \item $w_0$ is adjacent to some vertex $s_0 v_1$ with $s_0 \in S_0$.  Then $s_0^{-1} w_0$ is adjacent to $v_1$, and we apply the argument from Case (1).  \item $w_0$ is adjacent to $v_i$ where $i \in \{2,\ldots,d-1\}$.  Then there is a vertex $v_{i-1}'$ of type $(i - 1)$ so that $v_i$, $w_0$ and $v_{i-1}'$ are mutually adjacent. Since $S_i$ acts transitively on the type $(i-1)$ neighbours of $v_i$, we have that $s_i v_{i-1}' = v_{i-1}$ for some $s_i \in S_i$.  Thus $s_i w_0$ is adjacent to $v_{i-1}$.  By repeating this argument, we obtain after finitely many steps that for some $\gamma \in \G_0$ we have $\gamma w_0$ adjacent to $v_1$, and we may then apply the argument from Case (1).  \item $w_0$ is adjacent to a vertex $v_i' \neq v_i$ of type $i \in \{ 2, \ldots, d-1\}$, with $\delta(v_0, v_i') = \delta(v_i',w_0) = 1$.  Choose a vertex $v_1'$ of type $1$ so that $v_0$, $v_1'$ and $v_i'$ are mutually adjacent.  Then there is an $s_0 \in S_0$ such that $s_0 v_1' = v_1$, and hence $s_0 v_i'$ is a neighbour of $v_1$ of type $i$.  Now choose a vertex $v_2'$ of type $2$ so that $v_1$, $v_2'$ and $s_0v_i'$ are mutually adjacent.  Then there is an $s_1 \in S_1$ such that $s_1 v_2' = v_2$, and hence $s_1 s_0 v_i'$ is a neighbour of $v_2$ of type $i$.  By repeating this argument, we obtain that $\gamma v_i'$ is a neighbour of $v_{i-1}$ of type $i$, for some $\gamma \in \G'_0$.  Then there is an $s_{i-1} \in S_{i-1}$ such that $s_{i-1} \gamma v_i' = v_i$.  Thus $s_{i-1} \gamma w_0$ is a neighbour of $v_i$, and so we may apply the argument from Case (3).\end{enumerate}

The inductive step is exactly as in the case $d = 3$.
\end{proof}

\begin{corollary}\label{c:cocompact d>3} $\G'_0$ acts cocompactly on $\Delta$.
\end{corollary}

\begin{proof}  As in the case $d = 3$ (Corollary \ref{c:cocompact} above), this follows from the fact that $\G_0$ acts on $\Delta$ with finitely many orbits of vertices.
\end{proof}

We have established the claim that $\G'_0$ and $\G_0$ are cocompact lattices in $\PGL_d(\Fqt)$.  To finish the proof of Theorem \ref{t:PGL} in the case $d > 3$, we further describe the actions of $\G'_0$ and $\G_0$ on $\Delta$.  The proof of the following result is the same as for Corollary \ref{c:action} above.

\begin{corollary}\label{c:action d>3}  The action of $\G'_0$ and of $\G_0$ is type-preserving and transitive on each type of vertex in $\Delta$.  For $i = 0,\ldots,d-1$, the stabiliser of $v_i$ in $\G_0'$ is the group $S_i$, and in $\G_0$ is the group $N_i$.   
\end{corollary}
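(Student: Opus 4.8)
The plan is to follow verbatim the structure of the proof of Corollary~\ref{c:action}, since the only difference for $d > 3$ is that the index $i$ now ranges over $\{0,\ldots,d-1\}$ rather than $\{0,1,2\}$, and every ingredient used there has an exact analogue in this section. I would organise the argument into three short steps: type-preservation, transitivity, and the identification of the vertex stabilisers.

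First I would establish that both groups are type-preserving. Each $N_i$ is by construction a subgroup of $\tG$, and $\tG$ (being the Cartwright--Steger lattice) consists of type-rotating automorphisms of $\Delta$. Since $N_i$ stabilises the vertex $v_i$, which has type $i$, the group $N_i$ fixes the type $i$; but as recalled in Section~\ref{s:buildings} a type-rotating automorphism fixes either no type or all types, so each $N_i$ fixes every type. Hence $\G_0 = \la N_0,\ldots,N_{d-1}\ra$ is generated by type-preserving automorphisms and is therefore type-preserving, and the same conclusion transfers to the subgroup $\G'_0 \leq \G_0$.

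Next I would read off transitivity directly from Proposition~\ref{p:vertex transitive d>3}: that proposition already shows $\G'_0$ acts transitively on the vertices of each type, and since $\G'_0 \leq \G_0$ the larger group $\G_0$ is transitive on each type as well. Finally, for the stabilisers, I would use the squeeze already exploited in Corollary~\ref{c:action}: by definition $S_i$ is the stabiliser of $v_i$ in $\tG'$ and $S_i \leq \G'_0 \leq \tG'$, so
\[ S_i \leq \Stab_{\G'_0}(v_i) \leq \Stab_{\tG'}(v_i) = S_i, \]
forcing $\Stab_{\G'_0}(v_i) = S_i$; the identical argument with $N_i$, $\G_0$ and $\tG$ gives $\Stab_{\G_0}(v_i) = N_i$.

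I do not anticipate any genuine obstacle here, as the essential geometric content (vertex-transitivity) was already secured in Proposition~\ref{p:vertex transitive d>3}; the corollary is purely formal bookkeeping. The only point meriting a moment's care is the type-rotating-implies-type-preserving dichotomy, which must be invoked to upgrade ``fixes one type'' to ``fixes all types'', and the observation that $\Stab_{\tG'}(v_i) = S_i$ holds by the very definition of $S_i$ given in Section~\ref{s:PGL_d}.
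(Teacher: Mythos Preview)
Your proposal is correct and is exactly the approach the paper takes: the paper simply states that the proof is the same as for Corollary~\ref{c:action}, which is precisely the three-step argument you have written out (type-preservation via the type-rotating dichotomy, transitivity from Proposition~\ref{p:vertex transitive d>3}, and the squeeze $S_i \leq \Stab_{\G'_0}(v_i) \leq \Stab_{\tG'}(v_i) = S_i$).
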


\subsection{Lattices in $\PSL_d(\Fqt)$}\label{s:PSL_d}

The proof in Section \ref{s:PSL_3} above that when $d = 3$ the groups $$\Lambda_0:=\G_0 \cap \PSL_d(\Fqt) \quad \mbox{and} \quad \Lambda'_0 :=\G'_0 \cap \PSL_d(\Fqt)$$ are cocompact lattices in $\PSL_3(\Fqt)$ generalises immediately to the cases $d \geq 3$.  However, describing these intersections becomes a bit more complicated, due to the various numerical possibilities.  We list the outcomes for various pairs of $d$ and $q$ in the next statement, which follows from Proposition \ref{p:Dm3} and Lemma \ref{l:H} above.  Recall that $S_i$ is a Singer cycle of $\PGL_d(q)$, hence $S_i \cong C_{\frac{q^d - 1}{q-1}}$, and that $N_i \cong C_{\frac{q^d -1}{q-1}} \rtimes C_d$. 
 
\begin{lemma}
\label{l:all lattices}
Let $q=p^a$, $a \in \N$, $d \geq 3$, and $i \in \{0,\ldots, d-1\}$.  
\begin{enumerate}
\item  Suppose that $(d,q-1) = 1$.
\begin{enumerate}
\item 
If $p$ does not divide $d$, then $$N_i \cap \PSL_d(\Fqt) \cong C_{\frac{q^d-1}{q-1}}\rtimes C_d$$ is equal to $N_i$.  Hence $\Lambda_0' = \G_0'$ and $\Lambda_0=\Gamma_0$.
\item If $p$ divides $d$, then  $$N_i\cap\PSL_d(\Fqt) \cong C_{\frac{q^d-1}{q-1}}\rtimes C_{\frac{d}{\Ord_p(d)}} $$  is a proper subgroup of $N_i$.  
Moreover, $S_i\leq  N_i\cap\PSL_d(\Fqt)$.  Hence $\Gamma_0'=\Lambda_0'$ and $\Lambda_0$ is a proper subgroup of $\Gamma_0$.
\end{enumerate}
\item Suppose that $(d,q-1) \neq 1$.
\begin{enumerate}
\item  If $p$ does not divide $d$, then  $$N_i\cap\PSL_d(\Fqt) \cong C_{\frac{q^d-1}{(q-1)(d,q-1)}}\rtimes C_d$$ is a proper subgroup of $N_i$.  Moreover, $S_i$ is not contained in $N_i\cap\PSL_d(\Fqt)$.  Hence $\Gamma_0'$ is a proper subgroup of $\Lambda_0'$ and $\Lambda_0$ is a proper subgroup of $\Gamma_0$.\item If $p$ divides $d$, then $$N_i\cap\PSL_d(\Fqt) \cong C_{\frac{q^d-1}{(q-1)(d,q-1)}}\rtimes C_{\frac{d}{\Ord_p(d)}}$$ is a proper subgroup of $N_i$. Moreover, $S_i$ is not contained in $N_i\cap\PSL_d(\Fqt)$.  Hence $\Gamma_0'$ is a proper subgroup of $\Lambda_0'$ and $\Lambda_0$ is a proper subgroup of $\Gamma_0$.

\end{enumerate}
\end{enumerate}
\end{lemma}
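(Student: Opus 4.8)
The plan is to reduce the entire statement to the computation of a single group, $H \cap \PSL_d(\Fqt)$. By Corollary \ref{c:action d>3}, each $N_i$ is a $\tG$--conjugate of $H = \tG_v$ and each $S_i$ a $\tG$--conjugate of the Singer cycle $S < H$ of Lemma \ref{l:H}; since conjugation by an element of $\PGL_d(\Fqt)$ preserves $\PSL_d(\Fqt)$, the isomorphism type of $N_i \cap \PSL_d(\Fqt)$ and of $S_i \cap \PSL_d(\Fqt)$ is independent of $i$ and equals that of $H \cap \PSL_d(\Fqt)$ and $S \cap \PSL_d(\Fqt)$. Once these are known, every remaining assertion is formal: $N_i \cap \PSL_d(\Fqt) = N_i$ for all $i$ forces $\Lambda_0 = \G_0$, whereas a single $N_i$ (resp.\ $S_i$) failing to lie in $\PSL_d(\Fqt)$ forces $\Lambda_0 \subsetneq \G_0$ (resp.\ $\Lambda_0' \subsetneq \G_0'$), because $N_i \le \G_0$ and $S_i \le \G_0'$ by construction.

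To compute $H \cap \PSL_d(\Fqt)$, write $\delta = (d,q-1)$ and recall from Lemma \ref{l:H} that $H \cong N_{\PGL_d(q)}(S) = S \rtimes C_d$, where the normal factor $S \cong C_{(q^d-1)/(q-1)}$ is the image of $\mE^\times$ and the complement $C_d$ is generated by the image $\tau$ of $t$. The key input is Proposition \ref{p:Dm2}, which characterises which elements $\gamma(at^k)$ of $H$ lie in $\PSL_d(\Fqt)$, namely those with $N(a) \in (\mK^\times)^d$ and $\Ord_p(k) \ge \Ord_p(d)$. I would stress that these two conditions decouple: the first restricts only the $S$--part, the second only the $C_d$--part. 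The norm condition cuts $S$ down to $S \cap \PSL_d(\Fqt)$, whose order I obtain exactly as in the proof of Proposition \ref{p:Dm3} (surjectivity of $N$ together with $|\mK^\times : (\mK^\times)^d| = \delta$ and $\mK^\times \subseteq N^{-1}((\mK^\times)^d)$) to be $\frac{q^d-1}{(q-1)\delta}$, precisely the order of a Singer cycle of $\PSL_d(q)$ recorded in Section \ref{s:singer}. The condition $\Ord_p(k) \ge \Ord_p(d)$ selects the multiples of $\Ord_p(d)$ in $\mZ/d\mZ$, i.e.\ the unique subgroup $\la \tau^{\Ord_p(d)} \ra \cong C_{d/\Ord_p(d)}$ of $C_d$.

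Assembling these, $S \cap \PSL_d(\Fqt)$ is normal in $H \cap \PSL_d(\Fqt)$ (since $S \triangleleft H$), the cyclic group $\la \tau^{\Ord_p(d)}\ra$ lies in $\PSL_d(\Fqt)$ by Proposition \ref{p:Dm2} with $a = 1$ (so $N(1)=1 \in (\mK^\times)^d$ and $\Ord_p(\Ord_p(d)) \ge \Ord_p(d)$), and it meets $S$ trivially inside $S \rtimes C_d$; comparing the product of their orders with $|H \cap \PSL_d(\Fqt)| = \frac{d}{\Ord_p(d)} \cdot \frac{q^d-1}{(q-1)\delta}$ from Lemma \ref{l:H}(4) shows that together they exhaust $H \cap \PSL_d(\Fqt)$. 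Hence
$$N_i \cap \PSL_d(\Fqt) \cong C_{\frac{q^d-1}{(q-1)\delta}} \rtimes C_{\frac{d}{\Ord_p(d)}}.$$
The four displayed cases are then read off by substituting $\delta = 1$ or $\delta \ne 1$ together with $\Ord_p(d) = 1$ (equivalently $p \nmid d$) or $\Ord_p(d) > 1$; and since $|S_i \cap \PSL_d(\Fqt)| = \frac{q^d-1}{(q-1)\delta}$ while $|S_i| = \frac{q^d-1}{q-1}$, we have $S_i \le \PSL_d(\Fqt)$ exactly when $\delta = 1$, which combined with the formal observations of the first paragraph yields all the stated conclusions about $\Lambda_0$ and $\Lambda_0'$.

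The main obstacle is not the final case analysis, which is pure substitution, but establishing the semidirect decomposition rather than merely the order of $H \cap \PSL_d(\Fqt)$: one must check that the two conditions of Proposition \ref{p:Dm2} separate cleanly with no interaction between the $a$-- and $t$--variables, correctly identify $S \cap \PSL_d(\Fqt)$ with the full Singer cycle of $\PSL_d(q)$ through the index computation $|\mE^\times : N^{-1}((\mK^\times)^d)| = \delta$, and verify that the complement $\la \tau^{\Ord_p(d)}\ra$ genuinely splits off. After that, Lemma \ref{l:H}(4) and Proposition \ref{p:Dm3} supply the matching orders and the argument closes.
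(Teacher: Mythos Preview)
Your proposal is correct and follows the same route the paper indicates: the paper's entire proof is the single sentence ``follows from Proposition~\ref{p:Dm3} and Lemma~\ref{l:H} above,'' and you have simply unpacked that reference, going back to the underlying Proposition~\ref{p:Dm2} to extract not just the order $|H\cap\PSL_d(\Fqt)|$ but the full semidirect decomposition. Your reduction to $H$ via conjugation in $\PGL_d(\Fqt)$ (using normality of $\PSL_d$) and the observation that the norm and $\Ord_p$ conditions in Proposition~\ref{p:Dm2} decouple are exactly the implicit steps behind the paper's one-liner.
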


The following relationships between the lattices $\G_0$ and $\G_0'$ and the Cartwright--Steger lattice $\tG$ are implied by Lemma \ref{l:all lattices} above, together with similar arguments to those used in Lemmas \ref{l:3 divides q+1} and \ref{l:3 divides q} above.

\begin{lemma} Assume that $(d,q-1) = 1$.  If $p$ does not divide $d$, then $\G_0 = \tG \cap \PSL_d(\Fqt)$, while if $p$ divides $d$, then $\G'_0 \leq \tG \cap \PSL_d(\Fqt)$.
\end{lemma}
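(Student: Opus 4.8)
The plan is to treat the two cases separately, with the case $p \mid d$ being essentially immediate and the case $p \nmid d$ requiring the orbit--stabiliser argument of Lemma \ref{l:3 divides q+1}.

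First I would dispose of the case $p \mid d$. By Lemma \ref{l:all lattices}(1)(b), each generator $S_i$ of $\G'_0$ satisfies $S_i \leq N_i \cap \PSL_d(\Fqt) \leq \PSL_d(\Fqt)$, so $\G'_0 = \la S_0,\ldots,S_{d-1}\ra \leq \PSL_d(\Fqt)$. Since also $\G'_0 \leq \tG' \leq \tG$, this at once gives the asserted containment $\G'_0 \leq \tG \cap \PSL_d(\Fqt)$.

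For the case $p \nmid d$, the forward inclusion $\G_0 \leq \tG \cap \PSL_d(\Fqt)$ follows from Lemma \ref{l:all lattices}(1)(a), which gives $\G_0 = \Lambda_0 \leq \PSL_d(\Fqt)$, together with $\G_0 \leq \tG$. For the reverse inclusion I would imitate the proof of Lemma \ref{l:3 divides q+1}: given $g \in \tG \cap \PSL_d(\Fqt)$, the vertex $gv_0$ has type $0$ because the $\PSL_d$--action is type-preserving; by Corollary \ref{c:action d>3} the group $\G_0$ acts transitively on vertices of type $0$, so there is $g_0 \in \G_0$ with $g_0^{-1} g v_0 = v_0$; then $h := g_0^{-1}g$ lies in $\Stab_{\tG}(v_0) = N_0$, and since $\Stab_{\G_0}(v_0) = N_0 \leq \G_0$ we conclude $g = g_0 h \in \G_0$. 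Hence $\tG \cap \PSL_d(\Fqt) \leq \G_0$ and equality holds.

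The one point requiring care --- and the reason the statement asserts mere containment rather than equality when $p \mid d$ --- is that this orbit--stabiliser argument cannot be pushed through for $\G'_0$. Repeating it would produce some $h \in N_0 \cap \PSL_d(\Fqt)$, whereas $\Stab_{\G'_0}(v_0) = S_0$, and by Lemma \ref{l:all lattices}(1)(b) the cyclic group $S_0 \cong C_{(q^d-1)/(q-1)}$ is a \emph{proper} subgroup of $N_0 \cap \PSL_d(\Fqt) \cong C_{(q^d-1)/(q-1)} \rtimes C_{d/\Ord_p(d)}$ whenever $d/\Ord_p(d) > 1$. Thus $h$ need not lie in $\G'_0$, so the forward containment is all one can expect in general; I would therefore confirm explicitly that the vertex stabiliser of $\G'_0$ at $v_0$ is exactly $S_0$ (Corollary \ref{c:action d>3}), so that no extra generator rescues the argument.
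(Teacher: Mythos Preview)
Your proposal is correct and follows exactly the approach the paper indicates: invoke Lemma~\ref{l:all lattices} to control the vertex stabilisers inside $\PSL_d(\Fqt)$, and then run the orbit--stabiliser argument of Lemma~\ref{l:3 divides q+1} (with Corollary~\ref{c:action d>3} supplying transitivity on type-$0$ vertices) for the reverse inclusion when $p \nmid d$. Your final paragraph explaining why the argument only yields containment when $p \mid d$ and $d/\Ord_p(d) > 1$ is a welcome clarification that the paper itself omits; note in particular that it recovers the equality of Lemma~\ref{l:3 divides q} in the special case $d = 3$, $p = 3$, where $d/\Ord_p(d) = 1$.
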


\section{Minimality of covolumes}\label{s:covolumes}

In Section \ref{s:no p elements} we discuss whether cocompact lattices in the matrix groups we have been considering can contain $p$--elements.  We then in Section \ref{s:minimality} discuss minimality of covolumes of cocompact lattices in $G = \SL_3(\Fqt)$.

\subsection{Cocompact lattices, do they contain $p$--elements?}\label{s:no p elements}

We begin by establishing an analogue for $G = \SL_d(\Fqt)$ of Godement's Cocompactness Criterion.  This result, which was proved  by Borel and Harish-Chandra \cite{BHC} and independently by Mostow--Tamagawa \cite{MT}, states that for $G$ a semisimple $\mathbb{Q}$--algebraic group and $\G$ a lattice in $G$, $\G$ is cocompact if and only if $\G$ contains no non-trivial unipotent elements.  An element of $\GL(n,\mathbb{C})$ is \emph{unipotent} if all of its eigenvalues are equal to $1$.

We will use the general result contained in Proposition \ref{p:gelfand} below.  A similar statement can be found in, for example,~\cite[page 10]{GGPS}.  The proof in \cite{GGPS} requires a compact fundamental domain, that cannot be assured in our case.  Hence, for the sake of completeness, we exhibit a variation of their argument here.  The existence of a discrete cocompact subgroup will make the group $G$ locally compact, 
but we still formulate the result for a topological group because local compactness is not used in the proof.

\begin{prop}
\label{p:gelfand} 
Let $G$ be a topological group and $\Gamma$ a discrete cocompact subgroup of $G$.  
If $u\in\Gamma$, then $$u^G := \{ gug^{-1} \mid g \in G\}$$ is a closed subset of $G$. 
\end{prop}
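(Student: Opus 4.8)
The plan is to show that $u^G$ contains all its limit points, using the two hypotheses on $\Gamma$ for complementary purposes: cocompactness to confine the ``compact direction'' of a conjugating element, and discreteness to rigidify its $\Gamma$--direction. Since $G$ carries a discrete cocompact subgroup it is locally compact, so it suffices to take a net $(g_\alpha)$ in $G$ with $g_\alpha u g_\alpha^{-1} \to h$ and to produce some $g \in G$ with $g u g^{-1} = h$; this is what I would do. (If one insists on not invoking local compactness, as in the remark preceding the statement, the same outline applies, with the compact sets below supplied directly by cocompactness rather than by taking compact neighbourhoods.)

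First I would use cocompactness to fix a compact set $K \subseteq G$ with $\Gamma K = G$ and write $g_\alpha = \gamma_\alpha k_\alpha$ with $\gamma_\alpha \in \Gamma$ and $k_\alpha \in K$. Passing to a subnet, compactness of $K$ gives $k_\alpha \to k \in K$; put $w := k u k^{-1}$, which is a genuine conjugate of $u$, so $w \in u^G$. Setting $w_\alpha := k_\alpha u k_\alpha^{-1}$, continuity of the group operations yields $w_\alpha \to w$, and the hypothesis rewrites as
\[ \gamma_\alpha\, w_\alpha\, \gamma_\alpha^{-1} \longrightarrow h, \qquad w_\alpha \to w. \]
Crucially the $w_\alpha$ all lie in the compact set $L := \{\, k' u k'^{-1} : k' \in K \,\}$, the continuous image of $K$. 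Thus the problem is reduced to the following: given $\gamma_\alpha \in \Gamma$ and $w_\alpha \in L$ with $w_\alpha \to w$ and $\gamma_\alpha w_\alpha \gamma_\alpha^{-1} \to h$, show that $h$ is $G$--conjugate to $w$, and hence to $u$.

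The decisive step is to control the $\Gamma$--component $\gamma_\alpha$, and this is where discreteness must enter; I expect it to be the main obstacle. The clean outcome to aim for is that $\gamma_\alpha$ eventually takes values in a \emph{finite} subset of $\Gamma$: for then a subnet has $\gamma_\alpha$ equal to a constant $\gamma_0$, and $h = \lim \gamma_0 w_\alpha \gamma_0^{-1} = \gamma_0 w \gamma_0^{-1} \in u^G$, as required. To get finiteness I would examine $F := \{\, \gamma \in \Gamma : \gamma L \gamma^{-1} \cap L' \neq \emptyset \,\}$, where $L'$ is a fixed compact neighbourhood of $h$; since $\gamma_\alpha w_\alpha \gamma_\alpha^{-1}$ is eventually in $L'$ with $w_\alpha \in L$, all late $\gamma_\alpha$ lie in $F$, and discreteness of $\Gamma$ together with compactness of $L,L'$ should force $F$ to be locally finite, hence finite. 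The genuine difficulty, which I would have to handle with care, is that this last claim can fail when the centraliser $Z_G(w)$ is non-compact: multiplying $\gamma$ by an element of $Z_\Gamma(w)$ preserves membership in $F$, so $F$ may well be infinite. Resolving this centraliser issue is exactly the heart of the matter, and the essential use of discreteness: it is the mechanism distinguishing elements of a cocompact lattice from, for instance, unipotent elements, whose conjugacy classes are not closed and which consequently cannot lie in such a $\Gamma$. Concretely I would try to split off the centralising part of $\gamma_\alpha$, showing the $\gamma_\alpha$ occupy only finitely many cosets of $Z_\Gamma(w)$, and then argue that this residual part cannot move the limit off $u^G$ so that a constant transversal representative still delivers $h = \gamma_0 w \gamma_0^{-1}$; verifying that last point is the delicate step I expect to be the main obstacle.
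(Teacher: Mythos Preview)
Your proposal has a genuine gap, and you have correctly located it: the centraliser issue is real, and from your setup there is no easy way around it. But the difficulty is entirely self-inflicted, caused by the order in which you factor $g_\alpha$.

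You write $g_\alpha = \gamma_\alpha k_\alpha$ with $\gamma_\alpha \in \Gamma$ on the \emph{outside} of the conjugation, so that the inner element $w_\alpha = k_\alpha u k_\alpha^{-1}$ has no reason to lie in $\Gamma$; discreteness then gives you no direct purchase on it, and you are forced into the centraliser analysis. The paper's proof simply reverses the factorisation. Using compactness of $G/\Gamma$, one passes to a subnet so that $g_i\Gamma \to g\Gamma$, i.e.\ there exist $x_i \in \Gamma$ with $g_i x_i \to g$. Then
\[
g_i u g_i^{-1} \;=\; (g_i x_i)\,\bigl(x_i^{-1} u x_i\bigr)\,(g_i x_i)^{-1},
\]
and now the inner element $x_i^{-1} u x_i$ lies in $\Gamma$, because $u\in\Gamma$. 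Since $g_i x_i \to g$ and $g_i u g_i^{-1} \to h$, continuity gives $x_i^{-1} u x_i \to g^{-1} h g$. A convergent net in a discrete subgroup must eventually be constant, so $x_j^{-1} u x_j = g^{-1} h g$ for some $j$, and $h \in u^G$. That is the whole proof: no centralisers, no finiteness of $F$, no coset-counting.

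The moral is that the crucial use of $u\in\Gamma$ is to arrange that the \emph{$\Gamma$--part} of $g_\alpha$ conjugates $u$ and keeps it inside $\Gamma$; you spent that hypothesis by putting $\Gamma$ on the wrong side. Your instinct that discreteness must ``rigidify'' something was right, but it rigidifies the conjugate $x_i^{-1} u x_i$ itself, not the conjugator.
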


\begin{proof}
Let $g_i u g_i^{-1}$, $g_i\in G$, be a net converging to $v\in G$.
Since $\Gamma$ is cocompact, the set $\{ g_i\Gamma  \}$ admits
a convergent subnet, so without loss of generality, $g_i\Gamma \rightarrow g\Gamma $.
Thus, there exist such $x_i\in\Gamma$ that  $g_ix_i \rightarrow g$. 
Since $g_i u g_i^{-1} = (g_ix_i)( x_i^{-1} ux_i) (g_ix_i)^{-1}$,
the net $x_i^{-1} ux_i$ converges to $g^{-1}vg$.
Since all $x_i^{-1} ux_i$ are elements of the discrete subgroup $\Gamma$,
the net must stabilise, hence,  $x_j^{-1} ux_j=g^{-1}vg$  for some $j$,
and so we arrive at $v\in u^G$.
\end{proof}

It is an interesting question whether cocompact lattices in groups defined over a field of characteristic $p$ contain $p$--elements.  In \cite{L1} Lubotzky uses Proposition \ref{p:gelfand} above to show that cocompact lattices in $\SL_2(\Fqt)$, where $q=p^a$, contain no $p$--elements. In fact, this statement can be generalised in the following way.  

\begin{prop}\label{p:no p elements} Let $G = \SL_d(\Fqt)$ where $q = p^a$ with $p$ prime and $d\geq 2$.  Let $\G$ be a lattice in $G$.  Then $\G$ is cocompact if and only if $\G$ does not contain any elements of order $p$.
\end{prop}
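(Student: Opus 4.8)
The plan is to prove both directions, treating this as a characteristic-$p$ analogue of Godement's criterion where ``no $p$-elements'' plays the role of ``no unipotent elements.'' The key observation is that in $\SL_d(\Fqt)$, an element of order $p$ is exactly a nontrivial unipotent element: over a field of characteristic $p$, a matrix is unipotent if and only if it has $p$-power order, and conversely an element of order $p$ must be unipotent since its eigenvalues are $p$-th roots of unity, hence all equal to $1$ in characteristic $p$. So I would first record this equivalence, reducing the statement to ``$\G$ cocompact $\iff$ $\G$ has no nontrivial unipotents.''

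For the forward direction (cocompact $\Rightarrow$ no $p$-elements), I would argue by contradiction using Proposition \ref{p:gelfand}. Suppose $u \in \G$ is a nontrivial unipotent, so $u$ has order $p$. By Proposition \ref{p:gelfand}, the conjugacy class $u^G$ is closed in $G$. The strategy is then to exhibit a sequence $g_i \in G$ with $g_i u g_i^{-1} \to I$, the identity: concretely, if $u = I + \eta$ with $\eta$ a nonzero nilpotent, I would conjugate by a sequence of diagonal (torus) elements $g_i = \operatorname{diag}(t^{k_1 i}, \ldots, t^{k_d i})$ with suitably chosen exponents so that every off-diagonal entry of $g_i \eta g_i^{-1}$ is multiplied by a strictly positive power of $t$, forcing $g_i u g_i^{-1} \to I$ in the $t$-adic topology. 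Since $I \in \overline{u^G}$ but $I \notin u^G$ (as $u \neq I$), this contradicts closedness of $u^G$. The main technical point is checking that the valuations of the off-diagonal entries can simultaneously be pushed to $+\infty$; for a single nonzero nilpotent $\eta$ this follows by choosing the $k_j$ to dominate the filtration coming from a Jordan-type basis for $\eta$, so that conjugation strictly increases the $t$-valuation of the nilpotent part.

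For the reverse direction (no $p$-elements $\Rightarrow$ cocompact), I would use the Levi decomposition of vertex stabilisers from Proposition \ref{p:levi}, which gives each stabiliser in $G = \SL_d(\Fqt)$ the form $L_v \ltimes U_v$ with $U_v$ pro-$p$ and $L_v \cong \SL_d(\Fq)$. Since $\G$ is a lattice, it is discrete, so each $\Stab_\G(v) = \G \cap (L_v \ltimes U_v)$ is finite. If $\G$ contains no elements of order $p$, then $\Stab_\G(v) \cap U_v = 1$, because $U_v$ is pro-$p$ and a nontrivial finite $p$-group would contain an element of order $p$; hence $\Stab_\G(v)$ injects into the finite group $L_v \cong \SL_d(\Fq)$ and its order is bounded by $|\SL_d(\Fq)|$, uniformly over all vertices $v$. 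The covolume of $\G$ is $\sum_{a \in A}|\Stab_\G(a)|^{-1}$ over orbit representatives $A$; a uniform lower bound on stabiliser orders gives a uniform lower bound $|\SL_d(\Fq)|^{-1}$ on each term, so finiteness of the total covolume (which holds since $\G$ is a lattice) forces $A$ to be finite. Thus $\G$ has finitely many vertex orbits, and since $\Delta$ is locally finite this means $\G$ acts cocompactly, so $\G$ is a cocompact lattice.

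The main obstacle I anticipate is the forward direction: producing the explicit conjugating sequence $g_i$ that collapses an arbitrary nontrivial unipotent toward the identity, and verifying convergence in the correct topology on $G$. The clean way is to reduce to a single Jordan block via conjugation inside $\GL_d$ (absorbing the conjugating element into the net, which is harmless since we range over all of $G$), after which a single one-parameter torus suffices and the valuation estimate becomes transparent.
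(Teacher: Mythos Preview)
Your proposal is correct and follows essentially the same approach as the paper: for the forward direction you use Proposition~\ref{p:gelfand} together with the observation that a $p$-element is unipotent and can be conjugated toward the identity by a diagonal torus element, and for the reverse direction you use the Levi decomposition and the covolume formula to bound stabiliser orders and force finitely many orbits. The only cosmetic difference is that the paper argues the reverse direction contrapositively (non-cocompact $\Rightarrow$ unbounded stabilisers $\Rightarrow$ $p$-elements) rather than directly, and first conjugates $u$ to upper-triangular form inside $\SL_d$ before applying the diagonal contraction.
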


\begin{proof}  First suppose that $\G$ is non-cocompact and let $A$ be a set of vertices of the building for $G$ which represent the orbits of $\G$.  Then by the remarks in the introduction, $A$ is infinite and the series $\mu(\G \bs G) = \sum_{a \in A} |\Stab_\G(a)|^{-1}$ converges, hence $\G$ contains vertex stabilisers of arbitrarily large order.  The Levi decomposition (Proposition \ref{p:levi} above) then implies that $\G$ must have elements of order $p$.

For the converse, by Proposition \ref{p:gelfand} above, it is enough to show that if $u \in G$ is a $p$--element then there is $g \in G$ such that $g^k u g^{-k} \to I$ as $k \to \infty$, where $I$ is the identity matrix in $G$.

So let $u \in G$ be such that $u^p = I \neq u$.  Since we are working over a field of characteristic $p$, it follows that $(u - I)^p = 0$ and thus $u$ is a unipotent element of $G=\SL_d(\Fqt)$ (recall that by definition, unipotent elements are those with all eigenvalues equal to $1$).  Thus $u$ is conjugate in $G$ to a matrix with all $1$s on the diagonal and all below-diagonal elements $0$.  Without loss of generality we may assume that $u$ itself has all $1$s on the diagonal and all below-diagonal elements $0$. It is then not hard to construct a suitable diagonal matrix $g \in G$ such that $g^k u g^{-k}$ converges to $I$.
For example, for $d=3$, $g$ can be taken to be the following matrix:
$$\begin{pmatrix}
t^{2}&0&0\\
0&t&0\\
0&0&t^{-3}\end{pmatrix}.$$
\end{proof}

The proof of Proposition \ref{p:no p elements} makes essential use of the fact that in $\SL_d(\Fqt)$, an element of order $p$ is a genuine unipotent element (that is, is conjugate of a matrix with eigenvalues $1$).  However,  {\em one needs to be careful about cocompact lattices in other matrix groups}!

 Let us look again at the Cartwright--Steger lattice $\widetilde{\G}$ in $\PGL_d(\Fqt)$.  As we saw, $\widetilde{\G}=\G H$ where $H$ is a finite subgroup of $\PGL_d(\Fqt)$ of order $d\frac{(q^d-1)}{(q-1)}$.  Suppose that $p$ divides $d$ (for example, if $p=3=d$). Then obviously $H$, and thus $\widetilde{\G}$,  contains an element $\widetilde{h}\in H$ of order $p$.  On the other hand, $\widetilde{\G}$ is a cocompact lattice in $\PGL_d(\Fqt)$. What is going on?  The answer comes from the fact that under the natural map $\GL_d(\Fqt)\rightarrow \PGL_d(\Fqt)$, $\widetilde{h}$ is the image of an element $h\in \GL_d(\Fqt)$ of infinite order. Hence, $\widetilde{h}$ is not  ``genuinely unipotent" and the proof of Proposition~\ref{p:no p elements} above does not work.
In fact the conjugacy class of $\widetilde{h}$ in $\PGL_d(\Fqt)$ is closed, so there is no contradiction with Proposition \ref{p:gelfand} above.

\subsection{Minimality of covolumes}\label{s:minimality}

As discussed in the introduction, our original motivation was to find cocompact lattices of minimal covolume in $\SL_3(\Fqt)$, and this led us to considering vertex stabilisers which are Singer cycles or normalisers of Singer cycles.  We now consider covolumes of cocompact lattices in the special case that $G = \SL_3(\Fqt)$ and $(3,q-1) = 1$. 
Notice that in particular, $\SL_3(\Fqt) = \PSL_3(\Fqt)$.  

By Theorem \ref{t:PSL} and the remarks in the introduction, we have that $\G'_0$ is a cocompact lattice in $G$ of covolume
\[ \mu(\G'_0 \bs G) = \sum_{i=0}^2 \frac{1}{|\Stab_{\G'_0}(v_i)|}  = \sum_{i=0}^2 \frac{1}{|S_i|}= \frac{3}{q^2 + q + 1}.\]
Also, if $p \neq 3$, then $\G_0$ is a cocompact lattice in $G$ of covolume
\[ \mu(\G_0 \bs G) = \sum_{i=0}^2 \frac{1}{|\Stab_{\G_0}(v_i)|}  = \sum_{i=0}^2 \frac{1}{|N_i|}= \frac{3}{3(q^2 + q + 1)} = \frac{1}{q^2 + q + 1}.\]

Now let $\G$ be any cocompact lattice in $G = \SL_3(\Fqt)$.  Then by Proposition \ref{p:no p elements} above, each vertex stabiliser in $\G$ is a finite $p'$--subgroup of a vertex stabiliser in $G$.  The Levi decomposition (Proposition \ref{p:levi} above) then implies that each vertex stabiliser in $\G$ is isomorphic to a $p'$--subgroup of $\SL_3(q) = \PSL_3(q)$.  We thus consider maximal $p'$--subgroups of $\PSL_3(q)$, in Lemma \ref{l:maximal p'} below.  

Note that since $\G$ is type-preserving, $\G$ has at least one orbit of vertices of each type $i = 0,1,2$.  It follows that if $|\Stab_\G(v_i)| \leq q^2$ for each $i$, then $\mu(\G \bs G) > \mu(\G'_0 \bs G)$ and so $\G$ is not a cocompact lattice of minimal covolume.  Hence in the next statement we consider only maximal $p'$--subgroups of order greater than $q^2$. 

\begin{lemma}\label{l:maximal p'}
Let $K=\PSL_3(q)$, where $q=p^a>72$ with $p$ prime and $a\in \mN$.  Assume that $(3,q-1) = 1$ and $q > 72$.   Let $H$ be a maximal  $p'$--subgroup of $K$ with $|H|>q^2$.  Then one of the following conditions holds.

If $p = 2$:
\begin{enumerate}
\item  $H$ is a subgroup of the normaliser of a maximal split torus of $K$ and $|H|=3(q-1)^2$; or
\item $H$ is the normaliser of a Singer cycle of $K$ and $|H|=3(q^2+q+1)$.
\end{enumerate}

If $p=3$:
\begin{enumerate}
\item $H$ is a subgroup of the normaliser of a maximal split torus of $K$ and $|H|=2(q-1)^2$;
\item  $H$ is the normaliser of a Singer cycle of $K$ and $|H|=(q^2+q+1)$; or
\item $H$ is a subgroup of a Levi complement of a maximal parabolic subgroup of $K$ and $|H|=2(q^2-1)$.
\end{enumerate}

If $p\geq 5$:
\begin{enumerate}
\item $H$ is  the normaliser of a maximal split torus of $K$ and $|H|=6(q-1)^2$;
\item  $H$ is the normaliser of a Singer cycle of $K$ and $|H|=3(q^2+q+1)$; or
\item $H$ is a subgroup of a Levi complement of a maximal parabolic subgroup of $K$ and $|H|=2(q^2-1)$.
\end{enumerate}

\end{lemma}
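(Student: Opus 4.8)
The plan is to run through the classification of maximal subgroups of $K = \PSL_3(q) = \PGL_3(q)$ (the two groups coincide because $(3,q-1)=1$) due to Hartley and Mitchell, recorded as Theorem 6.5.3 of \cite{GLS3} and already invoked in Section \ref{s:singer}. Since $p$ divides $|K|$, any $p'$-subgroup $H$ is proper, so $H \leq M$ for some maximal subgroup $M$; and because $H$ is maximal among the $p'$-subgroups of $K$, it is in particular a maximal $p'$-subgroup of $M$. It therefore suffices to determine, for each class of maximal subgroup $M$, the order of a largest $p'$-subgroup of $M$, and to retain only those classes whose $p'$-subgroups can exceed $q^2$.

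First I would dispose of the ``small'' classes. The almost simple ($\mathcal{S}$-type) maximal subgroups --- the Hessian-type groups (which in any case require $3 \mid q-1$ and so do not occur under our hypothesis), together with $A_6$ and $\PSL_2(7)$ --- have order bounded by an absolute constant, so their $p'$-subgroups are far below $q^2$ once $q > 72$; this is precisely where that numerical hypothesis is used. The subgroup $\mathrm{SO}_3(q) \cong \PGL_2(q)$ (with its characteristic-$2$ analogue) has $p'$-subgroups of order at most $q^2 - 1 < q^2$. A subfield subgroup $\PSL_3(q_0)$ with $q = q_0^r$ has large order but, by the same torus analysis applied to $q_0$, its $p'$-subgroups have order $O(q_0^2) = O(q^{2/r}) \leq O(q) < q^2$; these are thus eliminated not by $|M|$ but by the smallness of their $p'$-subgroups. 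What remains are the parabolic (point- and line-)stabilisers, the monomial subgroup stabilising a frame, and the field-extension (Singer) subgroup.

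Next I would analyse the surviving families in terms of maximal tori of $K$. The monomial subgroup is the normaliser of a split maximal torus, of order $6(q-1)^2$ (relative Weyl group $S_3$); the field-extension subgroup is the Singer normaliser $C_{q^2+q+1} \rtimes C_3$ of order $3(q^2+q+1)$; and inside the Levi complement $\GL_2(q)$ of a maximal parabolic the largest $p'$-subgroups are the split-torus normaliser of order $2(q-1)^2$ (which is already contained in the global monomial group and so is never itself maximal) and the Singer normaliser of the Levi, $C_{q^2-1} \rtimes C_2$ of order $2(q^2-1)$. The final step is to strip off the $p$-part of each relative Weyl factor case by case. For $p \geq 5$ the factors $S_3$, $C_3$, $C_2$ are all coprime to $p$, giving $6(q-1)^2$, $3(q^2+q+1)$ and $2(q^2-1)$. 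For $p=3$ the factor $3$ in $S_3$ and in $C_3$ is a $p$-element, leaving $2(q-1)^2$ and $q^2+q+1$, while $2(q^2-1)$ survives intact. For $p=2$ the factor $2$ in $S_3$ (and in the Levi's $C_2$) is a $p$-element, giving $3(q-1)^2$; the odd order $3(q^2+q+1)$ is unchanged; and $2(q^2-1)$ reduces to the odd number $q^2-1 < q^2$, which is therefore discarded --- exactly accounting for the absence of a third case when $p=2$. In each retained case the maximal $p'$-subgroup of $M$ is itself a $p'$-group containing $H$, so maximality of $H$ forces $H$ to equal (for $p \geq 5$), or to be the indicated maximal $p'$-subgroup of (for $p = 2,3$), the relevant normaliser, yielding the stated structures and orders.

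The main obstacle I anticipate is not any single computation but the completeness and the bookkeeping: confirming that the Hartley--Mitchell list contains no further class with a $p'$-subgroup above $q^2$, correctly identifying the three conjugacy classes of maximal tori of $K$ together with their relative Weyl groups, and carefully tracking which Weyl contributions are genuine $p$-elements in each of the three characteristic regimes. A secondary subtlety is verifying that the split-torus normaliser of the parabolic Levi is properly contained in the global monomial subgroup, so that it does not produce a spurious additional case.
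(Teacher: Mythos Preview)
Your approach is exactly the paper's: both proofs invoke the Hartley--Mitchell classification of maximal subgroups of $\PSL_3(q)$ (Theorem 6.5.3 of \cite{GLS3}), and the paper's proof is in fact the single sentence ``The result follows immediately from the theorem of Hartley and Mitchell.'' Your proposal simply unpacks that citation --- running through the geometric classes, discarding those whose $p'$-subgroups cannot exceed $q^2$, and reading off the torus normalisers with their Weyl factors adjusted for the characteristic --- which is precisely the computation the paper leaves implicit.
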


\begin{proof}  The result follows immediately from the theorem of Hartley and Mitchell (cf. Theorem  6.5.3 of \cite{GLS3}).
\end{proof}

From this, the following minimality result in characteristic $2$ is immediate:

\begin{prop}\label{p:minimality p=2 d=3}  Suppose that $(3,q-1) = 1$ and that $p = 2$.  Then for $q$ large enough, the lattice $\G_0$ is a cocompact lattice of minimal covolume in $G=\SL_3(\Fqt)$.  \end{prop}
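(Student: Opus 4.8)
The plan is to show that every cocompact lattice in $G = \SL_3(\Fqt)$ has covolume at least $\frac{1}{q^2+q+1}$, which is precisely the covolume of $\G_0$ computed above, so that $\G_0$ attains the minimum. First I would record that, since $(3,q-1)=1$, we have $\SL_3(\Fqt) = \PSL_3(\Fqt)$ and $\G_0$ is a cocompact lattice in $G$ (Theorem~\ref{t:PSL}, case $1(a)$, using $p=2\neq 3$), of covolume $\mu(\G_0 \bs G) = \frac{1}{q^2+q+1}$ as already computed.

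Next, let $\G$ be an arbitrary cocompact lattice in $G$, and let $A$ be a set of vertices representing its orbits, so that $\mu(\G\bs G)=\sum_{a\in A}|\Stab_\G(a)|^{-1}$. The key structural input is that the action of $\SL_3(\Fqt)$ on $\Delta$ is type-preserving, hence $\G$ is type-preserving and has at least one orbit of vertices of each of the three types; in particular $|A|\geq 3$. To bound the stabilisers, I would invoke Proposition~\ref{p:no p elements}: since $\G$ is cocompact it contains no element of order $p$, so every vertex stabiliser of $\G$ is a finite $p'$-group. Combining this with the Levi decomposition (Proposition~\ref{p:levi}), a finite $p'$-subgroup $F$ of $L_v \ltimes U_v$ meets the pro-$p$ radical $U_v$ trivially (a finite $p'$-group inside a pro-$p$ group is trivial) and hence embeds into $L_v \cong \SL_3(q)=\PSL_3(q)$. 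Thus each vertex stabiliser of $\G$ is isomorphic to a $p'$-subgroup of $\PSL_3(q)$.

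The heart of the argument is the order bound. For $p=2$ and $q$ large (we reuse $q>72$, as required by Lemma~\ref{l:maximal p'}), Lemma~\ref{l:maximal p'} lists the maximal $p'$-subgroups of $\PSL_3(q)$ of order exceeding $q^2$: subgroups of the normaliser of a maximal split torus, of order $3(q-1)^2$, and normalisers of a Singer cycle, of order $3(q^2+q+1)$. Since $3(q^2+q+1)-3(q-1)^2 = 9q > 0$, the Singer-cycle normaliser realises the strictly largest order among these, while any $p'$-subgroup not appearing on this list has order at most $q^2 < 3(q^2+q+1)$. Hence every vertex stabiliser of $\G$ has order at most $3(q^2+q+1) = |N_i|$.

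Putting these together: as $|A|\geq 3$ and each term satisfies $|\Stab_\G(a)|^{-1}\geq \frac{1}{3(q^2+q+1)}$, we obtain
\[ \mu(\G \bs G) = \sum_{a \in A} \frac{1}{|\Stab_\G(a)|} \geq \frac{3}{3(q^2+q+1)} = \frac{1}{q^2+q+1} = \mu(\G_0 \bs G), \]
which gives the claimed minimality. Given the earlier results the argument is short; the only genuinely substantive ingredient is the Hartley--Mitchell classification packaged in Lemma~\ref{l:maximal p'}, which is already available. Within this proof the two points that must be checked with care are the numerical comparison $9q>0$, which singles out the Singer-cycle normaliser as the unique maximal $p'$-subgroup (so that $\G_0$'s vertex stabilisers are optimal), and the observation that type-preservation forces at least three terms in the covolume sum, so that no cocompact lattice can undercut $\G_0$ by concentrating into fewer orbits.
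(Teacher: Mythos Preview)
Your proof is correct and follows essentially the same approach as the paper's: both use Proposition~\ref{p:no p elements} together with the Levi decomposition to reduce vertex stabilisers of a cocompact lattice to $p'$--subgroups of $\PSL_3(q)$, then invoke Lemma~\ref{l:maximal p'} to bound their orders by $3(q^2+q+1)$, and combine this with the observation that type-preservation forces at least three orbit representatives. You have simply spelled out in more detail what the paper compresses into one sentence and the preceding discussion, including the explicit numerical comparison $3(q^2+q+1)-3(q-1)^2=9q>0$.
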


\begin{proof}  Let $\G$ be any cocompact lattice in $\SL_3(\Fqt)$ and assume that $q > 72$.   By Lemma \ref{l:maximal p'} and the discussion preceding it, for $i = 0,1,2$, we have $|\Stab_\G(v_i)| \leq |\Stab_{\G_0}(v_i)| = 3(q^2 + q + 1)$ and so $\mu(\G \bs G) \geq \mu(\G_0 \bs G)$ as required.\end{proof}

It would be nice either to prove or to disprove Proposition~\ref{p:minimality p=2 d=3} in an arbitrary characteristic $p$.  At the moment of writing, we cannot do it, for reasons we now explain.

A lattice $\G' \leq G = \SL_3(\Fqt)$ is said to be \emph{maximal} if for every lattice $\G \leq G$ such that $\G' \leq \G$, in fact $\G' = \G$.  It is clear that a cocompact lattice of minimal covolume must be a maximal lattice.  In fact,  the following is true.

\begin{prop}\label{p:maximal}  Suppose that $(3,q-1)=1$. Then for $q$ large enough, if $p=3$, the lattice $\G'_0$ is a maximal lattice in $G=\SL_3(\Fqt)$ and if $p\geq 5$, the lattice $\G_0$ is a maximal lattice in $G=\SL_3(\Fqt)$. \end{prop}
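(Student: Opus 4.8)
Write $\G_* := \G_0'$ when $p = 3$ and $\G_* := \G_0$ when $p \geq 5$.  In either case $(3,q-1)=1$ gives $\G_* \leq \PSL_3(\Fqt) = \SL_3(\Fqt)$, the action of $\G_*$ is type-preserving and transitive on each type of vertex, and by Corollary \ref{c:action} the stabiliser of $v_i$ in $\G_*$ is $S_i$ (when $p=3$) or $N_i$ (when $p \geq 5$); in both cases this stabiliser contains the Singer cycle $S_i$, a cyclic group of order $q^2+q+1$.  The plan is to take an arbitrary lattice $\G$ in $G = \SL_3(\Fqt)$ with $\G_* \leq \G$ and show $\G = \G_*$.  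First I would observe that $\G$ is cocompact, since the natural surjection $\G_* \bs G \to \G \bs G$ is continuous and $\G_* \bs G$ is compact.  By Proposition \ref{p:no p elements} this forces $\G$ to contain no element of order $p$, so every vertex stabiliser $\Stab_\G(v)$ is a finite $p'$-group; by the Levi decomposition (Proposition \ref{p:levi}) it is then isomorphic to a $p'$-subgroup of $L_v \cong \SL_3(q) = \PSL_3(q)$.

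The decisive step is to determine $\Stab_\G(v_i)$ for $i = 0,1,2$.  Since $\G_* \leq \G$, this stabiliser contains $\Stab_{\G_*}(v_i)$, and hence the cyclic group $S_i$ of order $q^2+q+1 > q^2$.  Thus $\Stab_\G(v_i)$ is a $p'$-subgroup of $\PSL_3(q)$ of order greater than $q^2$, so it is contained in a maximal $p'$-subgroup $M$ of order greater than $q^2$, and $M$ is one of the three types listed in Lemma \ref{l:maximal p'}.  As $M$ contains an element of order $q^2+q+1$, this integer divides $|M|$.  I would eliminate the split-torus-normaliser type and the parabolic-Levi type by a coprimality computation: from $(3,q-1)=1$ one has $\gcd(q^2+q+1,\,q-1) = \gcd(3,q-1) = 1$ and $\gcd(q^2+q+1,\,q+1) = 1$, while in both characteristics $q^2+q+1$ is odd and congruent to $1 \pmod 3$, hence prime to $6$; therefore $q^2+q+1$ is coprime to each of $2(q-1)^2$, $6(q-1)^2$ and $2(q^2-1)$ and, being larger than $1$, divides none of them.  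Consequently $M$ must be the normaliser of a Singer cycle.  When $p = 3$ this group has order $q^2+q+1 = |S_i|$, forcing $\Stab_\G(v_i) = S_i$; when $p \geq 5$ it has order $3(q^2+q+1) = |N_i|$, and from $N_i \leq \Stab_\G(v_i) \leq M$ with $|N_i| = |M|$ we again get $\Stab_\G(v_i) = N_i$.  In both cases $\Stab_\G(v_i) = \Stab_{\G_*}(v_i)$.

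It remains to upgrade equality of vertex stabilisers to equality of groups.  Given $g \in \G$, the vertex $gv_0$ has type $0$ because $\G \leq \PSL_3(\Fqt)$ acts type-preservingly; since $\G_*$ is transitive on vertices of type $0$ there is $g_0 \in \G_*$ with $g_0 g v_0 = v_0$, so $g_0 g \in \Stab_\G(v_0) = \Stab_{\G_*}(v_0) \leq \G_*$ and hence $g \in \G_*$.  This yields $\G = \G_*$ and proves maximality.  I expect the middle paragraph to be the main obstacle: the whole argument rests on the classification of maximal $p'$-subgroups of order exceeding $q^2$ in Lemma \ref{l:maximal p'} (which in turn depends on the Hartley--Mitchell theorem) together with the numerical fact that the Singer element of order $q^2+q+1$ is incompatible with every non-Singer candidate.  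The hypotheses $q > 72$ and $(3,q-1)=1$ are exactly what make the classification available and the coprimality estimates clean.
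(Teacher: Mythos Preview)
Your proof is correct and follows essentially the same strategy as the paper: show that any overlattice $\Gamma$ is cocompact, hence by Proposition~\ref{p:no p elements} has $p'$-vertex stabilisers inside $\PSL_3(q)$, then use Lemma~\ref{l:maximal p'} to conclude that the stabilisers in $\Gamma$ coincide with those in $\Gamma_*$, and finish by transitivity on type-$0$ vertices. The paper's own argument is terser in two places. First, rather than your coprimality elimination of the torus-normaliser and Levi options, it simply observes that $N_i$ (respectively $S_i$) already appears on the list in Lemma~\ref{l:maximal p'} as a \emph{maximal} $p'$-subgroup, so any $p'$-subgroup $\Stab_\Gamma(v_i)$ containing it must equal it. Second, to pass from equality of stabilisers to $\Gamma = \Gamma_*$, the paper invokes equality of covolumes $\mu(\Gamma\backslash G) = \mu(\Gamma_*\backslash G)$, whereas you give the direct transitivity argument (in the style of Lemma~\ref{l:3 divides q+1}); both are valid and immediate.
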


\begin{proof}  We give the proof for $p\geq 5$.   The proof for $p = 3$ is similar.  Suppose that $\G$ is a lattice in $G$ such that $\G_0 \leq \G$.  Then $\G$ is cocompact, since $\G_0$ is cocompact.  
 Since $\G$ is type-preserving and $\G_0$ is transitive on each type of vertex, $\G$ is transitive on each type of vertex.  By Lemma \ref{l:maximal p'}, the vertex stabilisers in $\G_0$ are maximal $p'$--subgroups of $\PSL_3(q)$.  It follows that for $i = 0,1,2$ we have $\Stab_\G(v_i) = \Stab_{\G_0}(v_i)$ and hence $\mu(\G \bs G) = \mu(\G_0 \bs G)$.  Thus $\G = \G_0$ as required. 
\end{proof}
 

For $p \geq 5$, we have found a candidate besides $\G_0$ for the cocompact lattice of minimal covolume.  Let $H_1$ be the normaliser of a maximal split torus of $\PSL_3(q)$.  
Using complexes of groups (see \cite{BrH}), for $p$ odd and $(3,q-1) = 1$ we are able to construct a group $\G_1$ which acts transitively on the set of vertices of each type in \emph{some} building of type $\tilde{A}_2$ (possibly exotic), so that each vertex stabiliser in $\G_1$ is isomorphic to $H_1$.  However, for $p \geq 5$ we do not know whether $\G_1$ embeds in $G = \SL_3(\Fqt)$ as a cocompact lattice acting transitively on the set of vertices of each type in the building for $G$, with $\Stab_{\G_1}(v_i) \cong H_1$ for $i = 0,1,2$.  (For $p=3$, the whole group $H_1$ cannot be a vertex stabiliser, since it contains an element of order $3$.)
If there is such an embedding of $\G_1$, then by the same arguments as for Proposition \ref{p:maximal}, $\G_1$ is a maximal lattice in $G$, and it will have a smaller covolume than $\Gamma_0$:
\begin{eqnarray*}\mu(\G_1 \bs G)  & = & \sum_{i=0}^2 \frac{1}{|\Stab_{\G_1}(v_i)|}  = \sum_{i=0}^2 \frac{1}{|6(q-1)^2|}\\ & = & \frac{3}{6(q-1)^2} = \frac{1}{2(q-1)^2} < \frac{1}{q^2 + q + 1}.\end{eqnarray*}

Hence, we would like to finish this section with the following question and conjecture.
\vskip 3mm
\noindent {\bf Question.} Does $G = \SL_3(\Fqt)$ admit a lattice $\Gamma_1$ as described above?
\vskip 3mm
\noindent {\bf Conjecture.} Let $(p,3)=1=(3,q-1)$ and $G=\SL_3(\Fqt)$. Then either $\G_0$ is a cocompact lattice of minimal covolume, or $G$ admits a cocompact lattice $\Gamma_1$ as described above, and $\Gamma_1$ is a cocompact lattice of minimal covolume.

\section{Relationship with the work of Essert}\label{s:essert}

Recall from the introduction that Essert \cite{E} constructed cocompact lattices which act simply transitively on the set of panels of the same type in some $\tilde{A}_2$--building, possibly exotic.  We now conclude by resolving some open questions from \cite{E}.  

To explain these questions, let $\Delta$ be the building $\tilde{A}_2(K,\nu)$, for some field $K$ with discrete valuation~$\nu$, and let $G = \cG(K)$ where $\cG$ is in the set $\{ \PGL_3, \SL_3, \PSL_3\}$.  Suppose that $\G$ is a cocompact lattice in $\Aut(\Delta)$, meaning that $\G$ acts cocompactly on $\Delta$ with finite stabilisers.  
Since $G/Z(G)$ is not equal to $\Aut(\Delta)$, 
it is possible that $\G$ is not contained in $G$ even though $\G$ acts on the building associated to $G$. On the other hand, since $G/Z(G)$ is cocompact in $\Aut(\Delta)$, if $\G$ is a cocompact lattice in $G$, then $\G$ will be a cocompact lattice in $\Aut(\Delta)$.  The Mostow--Margulis Rigidity Theorem (see \cite{M}) implies that the group $\G$ cannot be a lattice in $\cG(K)$ for two different fields $K$.  

With the exception of one lattice which is realised explicitly in the group $\SL_3(\F_2(\!(t)\!)\!)$ (see the Remark in \cite[Section 5.2]{E}), it is an open question in \cite{E} whether the lattices constructed there act on any building $\tilde{A}_2(K,\nu)$, and also whether they can be embedded in any $\cG(K)$.  We consider these questions in the case that $K = \Fqt\,$.

Let $\Delta = \tilde{A}_2(\Fqt\,,\nu)$.  We first consider the lattice $\G_0' \leq \PGL_3(\Fqt)$ constructed in Section~\ref{s:PGL_3} above.  Since the vertex stabilisers of $\G'_0$ are Singer cycles of $\PGL_3(q)$, and $\G_0'$ acts transitively on the set of vertices of each type in $\Delta$, it follows that the lattice $\G_0'$ acts simply transitively on the set of panels of each type in $\Delta$.  Thus the lattice $\G'_0$ is of the form considered by Essert \cite{E}, and is contained in $\PGL_3(\Fqt)$ for all $q$.  From the discussion above, it follows that for all $q$, there is a lattice in $\Aut(\Delta)$ acting simply transitively on the set of panels of the same type.

Next suppose that $(3,q-1) = 1$.  We showed in Section \ref{s:PSL_3} above that in this case, the lattice $\G_0'$ is also contained in $\PSL_3(\Fqt) = \SL_3(\Fqt)$.  Hence for all $q$ such that $(3,q-1) = 1$, there is a lattice in $\SL_3(\Fqt)$ which acts simply transitively on the set of panels of the same type.

Finally suppose that $3 \mid (q-1)$.  From the Levi decomposition (Proposition \ref{p:levi} above) and Proposition \ref{p:no p elements} above, if $\G$ is a cocompact lattice in $\SL_3(\Fqt)$, then the vertex stabilisers in $\G$ are isomorphic to $p'$--subgroups of $\SL_3(q)$.  However, when $q$ is large enough and $3 \mid (q-1)$, there is no $p'$--subgroup of $\SL_3(q)$ which acts transitively on the points of the projective plane (see Section \ref{s:singer}).  Hence no vertex stabiliser in $\G$ can act transitively on the set of adjacent panels of the same type.  Thus if $q$ is large enough and $3 \mid (q-1)$, there is no lattice $\G < \SL_3(\Fqt)$ which acts (simply) transitively on the set of panels of the same type.

\end{document}